\numberwithin{equation}{section}
\newtheorem{thm}{Theorem}[subsection]
\newtheorem{lem}[thm]{Lemma}
\newtheorem{prop}[thm]{Proposition}
\newtheorem{cor}[thm]{Corollary}
\theoremstyle{definition}
\newtheorem{definition}[thm]{Definition}
\newtheorem{example}[thm]{Example}
\newtheorem{rmk}[thm]{Remark}
\newcommand{\Z}{\mathbb{Z}}
\newcommand{\obs}{\operatorname{obs}}
\newcommand{\Pf}{\noindent {\it Proof}}
\newcommand{\id}{\operatorname{id}}
\newcommand{\Lie}{\operatorname{Lie}}
\newcommand{\ov}{\overline}
\newcommand{\we}{\wedge}
\newcommand{\rk}{\operatorname{rk}}
\newcommand{\WW}{{\mathcal W}}
\newcommand{\EE}{{\mathcal E}}
\newcommand{\MM}{{\mathcal M}}
\newcommand{\TT}{{\mathcal T}}
\newcommand{\HH}{{\mathcal H}}
\newcommand{\VV}{{\mathcal V}}
\newcommand{\LL}{{\mathcal L}}
\newcommand{\Om}{\Omega}
\newcommand{\Hom}{\operatorname{Hom}}
\newcommand{\Ext}{\operatorname{Ext}}
\newcommand{\Aut}{\operatorname{Aut}}
\renewcommand{\a}{\alpha}
\newcommand{\om}{\omega}
\newcommand{\la}{\lambda}
\newcommand{\wt}{\widetilde}
\newcommand{\ot}{\otimes}
\newcommand{\sub}{\subset}
\newcommand{\ed}{\qed\vspace{3mm}}
\newcommand{\PGL}{\operatorname{PGL}}
\newcommand{\coarse}{\operatorname{coarse}}
\newcommand{\fm}{{\frak m}}
\newcommand{\gl}{{\frak gl}}
\renewcommand{\mod}{\operatorname{mod}}
\newcommand{\Tor}{\operatorname{Tor}}
\newcommand{\und}{\underline}
\newcommand{\OO}{{\mathcal O}}
\newcommand{\RR}{{\mathcal R}}
\newcommand{\Bl}{\operatorname{Bl}}
\newcommand{\coker}{\operatorname{coker}}
\newcommand{\DD}{{\mathcal D}}
\newcommand{\KK}{{\mathcal K}}
\newcommand{\II}{{\mathcal I}}
\newcommand{\SL}{\operatorname{SL}}
\newcommand{\GL}{\operatorname{GL}}
\newcommand{\G}{{\mathbb G}}
\newcommand{\hra}{\hookrightarrow}
\newcommand{\lan}{\langle}
\newcommand{\ran}{\rangle}
\newcommand{\CC}{{\mathcal C}}
\newcommand{\Spec}{\operatorname{Spec}}
\newcommand{\Sp}{\operatorname{Sp}}
\renewcommand{\P}{{\mathbb P}}
\newcommand{\si}{\sigma}
\newcommand{\de}{\delta}
\newcommand{\eps}{\epsilon}
\renewcommand{\ker}{\operatorname{ker}}
\newcommand{\im}{\operatorname{im}}
\newcommand{\A}{{\mathbb A}}
\newcommand{\Fitt}{\operatorname{Fitt}}
\newcommand{\can}{\operatorname{can}}
\newcommand{\tot}{\operatorname{tot}}
\newcommand{\sO}{{\mathcal O}}
\newcommand{\sHom}{\underline{\Hom}}
\newcommand{\red}{\operatorname{red}}
\newcommand{\nest}{\operatorname{nest}}
\newcommand{\Hyp}{{\mathcal H}yp}
\title{Hyperelliptic limits of quadrics through canonical curves and ribbons}
\author{Alexander Polishchuk}
\address{ 
    University of Oregon; National Research University Higher School of Economics; and Korea Institute for 
    Advanced Study 
  }
  \email{apolish@uoregon.edu}
\author{Eric Rains}
\address{California Institute of Technology}
\email{rains@caltech.edu}
\dedicatory{To David Eisenbud, an offering in ribbons}
\begin{document}

\begin{abstract}
We describe explicitly all hyperelliptic limits of quadrics through smooth canonical curves of genus $g$ in $\P^{g-1}$.
Also, we construct an open embedding of the blow up of a $\PGL_g$-bundle over the moduli space of curves
of genus $g$ along the hyperelliptic locus into the blow up of the canonical Hilbert scheme of $\P^{g-1}$ along the closure
of the locus of canonical ribbons, which are certain double thickenings of rational normal curves introduced and studied
by Bayer and Eisenbud in \cite{BE}.
\end{abstract}

\maketitle

\section*{Introduction}

This paper originated from the following question. Given a smooth
hyperelliptic curve $C_0$ of genus $g\ge 3$, which quadratic relations
between regular differentials can be deformed away from the hyperelliptic
locus?  In other words, the question is which elements of the kernel of the
multiplication map
$$\mu_{C_0}:S^2H^0(C_0,\om_{C_0})\to H^0(C_0,\om_{C_0}^{\ot 2})$$
can be extended along a deformation of $C_0$ with nonhyperelliptic generic
fiber.

An answer to the above question is the first main result of this paper (see
Theorem \ref{quadric-hyperell-thm}).  We show that there is a natural
identification of $\ker(\mu_{C_0})$ with the space of quadratic forms on a
$(g-2)$-dimensional space, and an element in $\ker(\mu_{C_0})$ extends away
from the hyperelliptic locus if and only if the corresponding quadratic
form is degenerate. The most essential part of the proof is the use of {\it
  ribbons} in their canonical embedding.

Recall that ribbons were introduced by Bayer and Eisenbud in \cite{BE}:
these are nonreduced curves that are double thickenings of $\P^1$. We
review the geometry of ribbons in Section \ref{ribbons-sec}.
Nonhyperelliptic ribbons of genus $g$ have canonical embeddings into
$\P^{g-1}$, just like smooth curves, and the original motivation of
\cite{BE} was that their syzygies should be easier to study, and in this
way one can approach Green's Conjecture for generic curves. Note that although Green's Conjecture
for generic curves was resolved by Voisin in a different way, a new recent proof in \cite{AFPRW}
uses degenerations of smooth canonical curves (but not to ribbons).

The main observation that connects ribbons to our question is that if we take a deformation of a smooth hyperelliptic
curve with nonhyperelliptic generic fiber then the limit of the corresponding family of canonical curves in the Hilbert scheme
of $\P^{g-1}$ is a ribbon (canonically embedded). Thus, our question can be solved using some deformation theory together with the description
of quadratic relations for ribbons in the canonical embedding. We also generalize this result to higher order canonical
relations (see Theorem \ref{rel-hyperell-thm}).

The above relation between hyperelliptic curves and ribbons goes back to the paper of Fong \cite{Fong}, where it was
observed that every ribbon arises from some deformation of a fixed hyperelliptic curve
in nonhyperelliptic direction. Our second main result is concerned with unraveling further the connection
between the hyperelliptic locus in $\MM_g$ and the ribbon locus in the canonical Hilbert scheme $H_g(\P^{g-1})$.
Namely, let $\wt{\MM}_g$ denote the $\PGL_g$-bundle over $\MM_g$ corresponding to a choice of a basis
of $H^0(C,\om_C)$, up to rescaling, let $\wt{\Hyp}_g\sub\wt{\MM}_g$ denote the hyperelliptic locus,
and let $\ov{R}_g$ denote the closure of the ribbon locus in $H_g(\P^{g-1})$.
We prove that away from characteristic $2$, there is an open immersion of the two blow-ups,
$$[\Bl_{\wt{\Hyp}_g}\wt{\MM}_g]^{\coarse}\hra \Bl_{\ov{R}_g}H_g(\P^{g-1})$$
whose image can be explicitly described (see Theorem \ref{two-blow-ups-thm}).
The proof is based on the analysis of equations of hyperelliptic curves and ribbons embedded into the weighted projective
space with $g$ homogeneous coordinates of weight $1$ and $g-2$ coordinates of weight $2$.
In addition, we prove that the blow up $\Bl_{\wt{\Hyp}_g}\wt{\MM}_g$ is precisely the graph of the rational
map from $\wt{\MM}_g$ to the canonical Hilbert scheme (see Corollary \ref{regular-map-cor}).

\medskip

\noindent
{\it Acknowledgments}.  The research of A.P. is partially supported by the
NSF grant DMS-1700642 and by the Russian Academic Excellence Project
`5-100'.  This collaboration began at an administrative meeting at MSRI
when the second author overheard the first author asking David Eisenbud the
question about hyperelliptic limits of quadrics through canonical curves; after several false starts, the
authors eventually implemented David's original suggestion that the answer should involve ribbons.

\section{Hyperelliptic limits of canonical curves in the Hilbert scheme}

\subsection{Ribbons}\label{ribbons-sec}

Recall that a {\it ribbon} is a non-reduced scheme $\RR$ such that one has an exact sequence
$$0\to \LL\to \OO_\RR\to \OO_{\RR_{\red}}\to 0$$ 
where $\LL^2=0$ and $\LL$, viewed as an $\OO_{\RR_{\red}}$-module, is a line bundle on $\RR_{\red}$.
We will only consider ribbons such that $\RR_{\red}=\P^1$. In this case the arithmetic genus of $\RR$ is $g\ge 0$
if and only if $\LL\simeq \OO_{\P^1}(-g-1)$.
The study of ribbons was initiated in \cite{BE} which remains the main reference for a basic background on them.

\begin{example}({\it Hyperelliptic (=split) ribbons})\label{hyperell-ribbon-ex}
For every $g\ge 0$, 
there is a unique, up to isomorphism, ribbon $\RR_0$ (with reduced scheme $\P^1$), admitting a degree $2$ map to $\P^1$
(see \cite[Sec.\ 2]{BE}). Namely, it has 
$$\OO_{\RR_0}=\OO_{\P^1}\oplus \OO_{\P^1}(-g-1),$$
the trivial extension of $\OO_{\P^1}$ by the square-zero ideal $\LL=\OO_{\P^1}(-g-1)$.
\end{example}

\begin{lem}\label{hyper-ribbon-deform-lem}
For any ribbon $\RR$ of genus $g$, with reduced scheme $\P^1$, there exists a flat family $\wt{\RR}$ over $\A^1$,
together with a closed embedding $\P^1\times\A^1\sub \wt{\RR}$,
with the fiber $\RR$ outside $0\in \A^1$ and with the fiber $\RR_0$ over $0$ (equipped the natural embedding of $\P^1$
into them). 
\end{lem}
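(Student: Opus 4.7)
My plan is to use the classification of ribbon structures from \cite{BE}: with the reduced scheme $\P^1$ and an identification $\LL \cong \OO_{\P^1}(-g-1)$ fixed, ribbons up to isomorphism are classified by the cohomology group $H^1(\P^1, \TT_{\P^1}\otimes\LL) = H^1(\P^1, \OO_{\P^1}(1-g))$, with the zero class giving the split ribbon $\RR_0$. The idea is to interpolate linearly: if $\eta$ denotes the class of $\RR$, I construct a family over $\A^1_t$ whose fiber at $t$ has class $t\eta$. At $t = 0$ this is $\RR_0$, while for $t \neq 0$ the cocycle $t\eta$ differs from $\eta$ by the $k^*$-action that rescales the trivialization of $\LL$, hence gives a ribbon abstractly isomorphic to $\RR$.

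Concretely, choose a \v{C}ech representative $\psi(x) \in k[x, x^{-1}]$ of $\eta$ for the standard cover $U_0 = \Spec k[x]$, $U_\infty = \Spec k[y]$ of $\P^1$. I build $\wt{\RR}$ by gluing
$$\wt{U}_0 = \Spec k[t, x, \epsilon]/(\epsilon^2) \quad \text{and} \quad \wt{U}_\infty = \Spec k[t, y, \epsilon']/(\epsilon'^2)$$
along the locus where $x$ is invertible via the relations
$$y = \frac{1}{x} + t\,\psi(x)\,\epsilon, \qquad \epsilon' = x^{-g-1}\,\epsilon,$$
with the embedding $\P^1 \times \A^1 \subset \wt{\RR}$ cut out by $\epsilon = \epsilon' = 0$. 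Flatness over $\A^1 = \Spec k[t]$ is automatic, since both charts are free as $k[t]$-modules.

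To finish, I identify the fibers. At $t = 0$ the gluing collapses to $y = 1/x$ and $\epsilon' = x^{-g-1}\epsilon$, exhibiting the structure sheaf as the trivial square-zero extension $\OO_{\P^1} \oplus \OO_{\P^1}(-g-1)$, i.e.\ the split ribbon of Example \ref{hyperell-ribbon-ex}. For $t_0 \neq 0$, the local rule $(x, \epsilon) \mapsto (x, t_0\,\epsilon)$ and $(y, \epsilon') \mapsto (y, t_0\,\epsilon')$ is a well-defined ring isomorphism (using $\epsilon^2 = 0$ and $t_0 \in k^*$), and converts the gluing $y = 1/x + t_0\psi\epsilon$ into $y = 1/x + \psi\tilde\epsilon$ with $\tilde\epsilon = t_0\epsilon$, matching the gluing for $\RR$; thus the fiber is isomorphic to $\RR$ compatibly with the inclusion of $\P^1$. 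The main point requiring care is setting up the correspondence between ribbon structures and \v{C}ech cocycles in $H^1(\P^1, \TT_{\P^1}\otimes\LL)$ precisely enough to know that the explicit gluing above really represents the class $t\eta$; once this dictionary from \cite{BE} is in place, the remaining checks are formal consequences of the explicit construction.
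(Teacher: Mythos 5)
Your proof is correct and takes essentially the same route as the paper: both rest on the Bayer--Eisenbud classification identifying $\RR$ with a class $e\in \Ext^1_{\P^1}(\Om_{\P^1},\LL)\cong H^1(\P^1,T_{\P^1}\otimes\LL)$, and both obtain the family by scaling that class linearly in $t$, so that the fiber at $t\neq 0$ is isomorphic to $\RR$ (via the $k^*$-rescaling of $\LL$) while the fiber at $0$ is the split ribbon $\RR_0$. The only difference is presentational: the paper constructs $\OO_{\wt{\RR}}$ abstractly as a fibered product against the extension of class $t\cdot e$ over $\P^1\times\A^1$, whereas you realize the same family by an explicit \v{C}ech gluing on the standard two-chart cover.
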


\Pf . Let 
$$0\to \LL\to \Om_{\RR}|_{\P^1}\to \Om_{\P^1}\to 0$$
be the restricted contangent sequence of $\RR$, and let $e\in \Ext^1_{\P^1}(\Om_{\P^1},\LL)$
be the corresponding extension class. Note that \cite[Thm.\ 1.2]{BE} provides a simple construction recovering $\RR$
from the class $e$. Now let us consider the extension sequence
$$0\to \LL\boxtimes\OO_{\A^1}\to \EE\to \Om_{\P^1}\boxtimes\OO_{\A^1}\to 0$$
over $\P^1\times \A^1$ corresponding to the extension class $t\cdot e$, where $t$ is the coordinate on $\A^1$.
We define the sheaf of abelian groups $\OO_{\wt{\RR}}$ on $\P^1\times\A^1$ to be the fibered product
\begin{diagram}
\OO_{\wt{\RR}}&\rTo{}& \OO_{\P^1\times\A^1}\\
\dTo&&\dTo{d_{\P^1}}\\
\EE&\rTo{}& \Om_{\P^1}\boxtimes\OO_{\A^1}
\end{diagram}
and equip it with the ring structure in a natural way. It is easy to check that $\wt{\RR}$ is a family of
ribbons over $\A^1$ with the desired properties.
\ed

It is shown in \cite{BE} that for every non-hyperelliptic ribbon $\RR$ the canonical embedding identifies $\RR$
with a subscheme of $\P^n$ (where $n=g-1$) supported on a rational normal curve $C_r\sub\P^n$.
We will refer to the obtained subschemes as {\it canonical ribbons} in $\P^n$. 
More explicitly, 
a canonical ribbon $\RR\sub\P^n$ corresponds to the quotient of $\OO_{\P^n}/\II_{C_r}^2$ associated with a surjective map
$$\II_{C_r}/\II_{C_r}^2=N^\vee\to \om_{C_r}(-1),$$
so that we have an exact sequence
\begin{equation}\label{ribbon-str-sh-seq}
0\to \om_{C_r}(-1)\to \OO_\RR\to \OO_{C_r}\to 0.
\end{equation}
Note that such $\RR$ has the same Hilbert polynomial (in fact Hilbert
series) as a canonical curve of genus $n+1$ in $\P^n$.

The following identification of the conormal bundle to the rational normal curve is well known 
(the proof in characteristic zero can be found as \cite[Prop.\ 5A.2]{BE}).

\begin{lem}\label{normal-bundle-lem}
Let $\iota=\iota_n:\P^1\to \P^n$ denote the Veronese embedding, so that $C_r=\iota(\P^1)$.
Then the conormal bundle of $\P^1\simeq C_r$ in $\P^n$
is given by
$$N^\vee\simeq H^0(\P^1,\OO(n-2))\ot \OO_{\P^1}(-n-2),$$
so that $\iota_*N^\vee\simeq \om_{C_r}(-1)^{\oplus n-1}.$
\end{lem}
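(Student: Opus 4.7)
My plan is to combine the Euler sequence of $\P^n$, pulled back to $\P^1$ via $\iota$, with the conormal sequence of the embedding, and to apply Grothendieck's splitting theorem on $\P^1$.

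First I pull back the Euler sequence
$$0 \to \Om^1_{\P^n}(1) \to V \otimes \OO_{\P^n} \to \OO_{\P^n}(1) \to 0,$$
with $V := H^0(\OO_{\P^n}(1))$, to obtain on $\P^1$
$$0 \to \iota^*(\Om^1_{\P^n}(1)) \to V \otimes \OO_{\P^1} \to \OO_{\P^1}(n) \to 0.$$
Under the Veronese identification $V \simeq H^0(\P^1,\OO(n))$, the right-hand arrow is evaluation, and it is an isomorphism on global sections. Hence $H^0(\iota^*(\Om^1_{\P^n}(1))) = 0$, and since this is a rank-$n$, degree-$(-n)$ bundle on $\P^1$, Grothendieck's theorem forces $\iota^*(\Om^1_{\P^n}(1)) \simeq \OO_{\P^1}(-1)^{\oplus n}$, i.e., $\iota^*\Om^1_{\P^n} \simeq \OO_{\P^1}(-n-1)^{\oplus n}$.

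Next I substitute this into the conormal sequence of $\iota$ and twist by $\OO_{\P^1}(n+1)$, obtaining
$$0 \to N^\vee(n+1) \to \OO_{\P^1}^{\oplus n} \to \OO_{\P^1}(n-1) \to 0,$$
using $\Om^1_{\P^1}(n+1) = \OO_{\P^1}(n-1)$. A second application of the pulled-back Euler sequence, twisted this time by $\OO_{\P^1}(1)$, identifies the trivial middle bundle canonically, as an $\SL_2$-equivariant bundle, with $H^0(\P^1,\OO(n-1))\otimes\OO_{\P^1}$: its space of global sections is the kernel of the multiplication $H^0(\OO(n))\otimes H^0(\OO(1))\to H^0(\OO(n+1))$, which equals $H^0(\OO(n-1))$. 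By $\SL_2$-equivariance and Schur's lemma, the right-hand surjection in the displayed sequence must then agree, up to a nonzero scalar, with the standard evaluation $H^0(\OO(n-1))\otimes\OO_{\P^1}\to\OO_{\P^1}(n-1)$, whose kernel is $\OO_{\P^1}(-1)^{\oplus(n-1)}$. Thus $N^\vee(n+1) \simeq \OO_{\P^1}(-1)^{\oplus(n-1)}$ and so $N^\vee \simeq \OO_{\P^1}(-n-2)^{\oplus(n-1)}$.

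For the canonical form $N^\vee \simeq H^0(\P^1,\OO(n-2))\otimes\OO_{\P^1}(-n-2)$, I twist by $\OO_{\P^1}(1)$ once more, so $N^\vee(n+2) \simeq \OO_{\P^1}^{\oplus(n-1)}$, and compute its global sections as $\ker\bigl(H^0(\OO(n-1))\otimes H^0(\OO(1))\to H^0(\OO(n))\bigr) = H^0(\P^1,\OO(n-2))$; the $\SL_2$-equivariant trivial bundle $N^\vee(n+2)$ is then $H^0(\P^1,\OO(n-2))\otimes\OO_{\P^1}$. The push-forward assertion follows from $\om_{C_r} = \iota_*\OO_{\P^1}(-2)$ and $\iota^*\OO_{C_r}(1) = \OO_{\P^1}(n)$, which give $\iota_*\OO_{\P^1}(-n-2) = \om_{C_r}(-1)$. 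The main obstacle is identifying the right-hand map in the second displayed sequence as the standard evaluation; the $\SL_2$-equivariance route is cleanest via Schur's lemma, but one can alternatively verify it in the affine chart $\{x_0 \ne 0\}$ through $\iota^*(dy_i) = i u^{i-1}\,du$ (with $y_i = x_i/x_0$ and $u = t/s$), which shows that the $n$ defining sections of $\OO_{\P^1}(n-1)$ form a basis.
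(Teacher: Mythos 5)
Your argument is correct over a field of characteristic zero, and in that setting it is a genuinely different (and shorter) route than the paper's: it is essentially the approach of \cite[Prop.\ 5A.2]{BE}, which the paper cites precisely as ``the proof in characteristic zero.'' The gap is that the lemma carries no characteristic hypothesis and is used in results (Lemma \ref{limit-lem}, Theorems \ref{quadric-hyperell-thm}, \ref{rel-hyperell-thm}) that are meant to hold in arbitrary characteristic, and the one step of your proof that is not characteristic-free is exactly the key one: identifying the conormal surjection $H^0(\P^1,\OO(n-1))\ot\OO_{\P^1}\to\OO_{\P^1}(n-1)$ with evaluation. Schur's lemma applies only when $H^0(\P^1,\OO(n-1))=\Sym^{n-1}$ of the standard representation is an irreducible $\SL_2$-module, and this fails once $\cha k=p\le n-1$: for instance $\Sym^{p}$ contains the Frobenius twist of the standard representation, spanned by $x_0^p,x_1^p$, as a proper submodule. (The conclusion you need, that every equivariant endomorphism of $\Sym^{n-1}$ is a scalar, does remain true in positive characteristic, but it is then a statement about induced modules with simple socle, not naive Schur's lemma, so the step as written is unjustified precisely in the cases the paper's proof is designed to cover.) Without that identification, nothing forces the kernel of a surjection $\OO_{\P^1}^{\oplus n}\to\OO_{\P^1}(n-1)$ to be $\OO_{\P^1}(-1)^{\oplus(n-1)}$; e.g.\ the surjection given by $x_0^2,x_1^2,x_0^2+x_1^2$ has kernel $\OO\oplus\OO(-2)$.

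Your fallback chart computation does not repair this, and in fact breaks more visibly: the images $\iota^*(dy_i)=i\,u^{i-1}\,du$ carry the coefficients $i$, so in characteristic $p\le n$ the element with $i=p$ is identically zero and the $n$ elements $i\,u^{i-1}$ cannot form a basis of $H^0(\OO(n-1))$ --- even though the lemma itself is still true there (e.g.\ the conic in characteristic $2$). The resolution of this apparent paradox is that the chart basis $dy_i\ot 1$ is \emph{not} the restriction of the global trivialization of $\iota^*\Om_{\P^n}(n+1)$, so the vanishing of some chart images does not create global sections of the kernel; but by the same token the chart verification, as stated, does not prove injectivity of the map on global sections even in characteristic zero. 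Compare with the paper's proof, which works with the explicit generators $du_i\ot x_0-du_{i+1}\ot x_1$ of $\ker(\can)$ and exploits the identity
\begin{equation*}
\de(du_i\ot x_0-du_{i+1}\ot x_1)=-x_0^ix_1^{n-i}\ot dx_0+x_0^{i+1}x_1^{n-1-i}\ot dx_1,
\end{equation*}
in which the differentiation coefficients cancel and every coefficient is $\pm1$. That cancellation is exactly the ingredient that makes the paper's argument characteristic-free, and it is what your proof is missing; if you restrict your claim to $\cha k=0$ (or $\cha k>n$), your proof is fine as it stands, with the Schur's lemma route being the rigorous one.
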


\Pf . Let $S=\bigoplus_{m\ge 0}H^0(\P^1,\OO(m))=k[x_0,x_1]$.
Pulling back the Euler sequence
$$0\to \Om_{\P^n}\to H^0(\P^n,\OO(1))\ot \OO_{\P^n}(-1)\to \OO_{\P^n}\to 0$$
by $\iota=\iota_n$, we get that the sheaf $\iota^*\Om_{\P^n}$ is isomorphic to the sheaf $\wt{M}$
associated with the graded $S$-module $M$ that fits into an exact sequence
$$0\to M\to S_n\ot S(-n)\rTo{\can} S\to 0$$
where $\can$ maps generators $S_n$ identically to $S_n$.
Furthermore, the sheaf $\Om_{\P^1}$ is the localization of a similarly defined $S$-module $M(\P^1)$ 
and the natural map $\iota^*\Om_{\P^n}\to \Om_{\P^1}$
is induced by the leftmost vertical map in the diagram
\begin{diagram}
0&\rTo{}& M&\rTo{}& S_n\ot S(-n)&\rTo{\can}& S&\rTo{}& 0\\
&&\dTo{}&&\dTo{\de}&&\dTo{n\cdot\id}\\
0&\rTo{}& M(\P^1)&\rTo{}& S_1\ot S(-1)&\rTo{}& S&\rTo{}& 0
\end{diagram}
Here the middle arrow $\de$ is induced by the map
$$S_n\to S_{n-1}\ot S_1: du_i\mapsto i x_0^{i-1}x_1^{n-i}\ot dx_0+(n-i)x_0^ix_1^{n-i-1}\ot dx_1,$$
where $u_i$ are homogeneous coordinates on $\P^n$ such that $\iota^*u_i=x_0^ix_1^{n-i}$.
It follows that the conormal sheaf $N^\vee$ is identified with the localization of the graded $S$-module
$\ker(\de)\cap\ker(\can)$. 

We claim that the $S$-module $\ker(\can)$ is generated in degree $n+1$ by the elements
$$du_i\ot x_0- du_{i+1}\ot x_1, \ \ 0\le i<n.$$
Indeed, an element $\sum du_i\ot c_i\in \ker(\can)_m$ satisfies the equation
$$\sum_{0\le i\le n} c_ix_0^ix_1^{n-i}=0$$
in $S_{n+m}$. This equation implies that $c_n$ is divisible by $x_1$, and
$$\sum_{0\le i\le n-2} c_i x_0^ix_1^{n-1-i}+(c_{n-1}+x_0c_n/x_1)x_0^{n-1}=0,$$
thus, we can apply the induction to deduce our claim.

We have for $i<n$,
$$\delta(du_i\ot x_0- du_{i+1}\ot x_1)=-x_0^i x_1^{n-i}\otimes dx_0
+x_0^{i+1}x_1^{n-1-i}\otimes dx_1.$$
Hence, an element $\sum_{0\le i<n} c'_i(du_i\ot x_0-du_{i+1}\ot x_1)$ is in $\ker(\de)$
if and only if
$$\sum_{0\le i<n}c'_i x_0^ix_1^{n-1-i}=0.$$
It follows that the graded module $\ker(\de)\cap\ker(\can)$ is generated by $n-1$ linearly independent elements
of degree $n+2$,
$$\beta_i=x_0(du_i\ot x_0-du_{i+1}\ot x_1)-x_1(du_{i+1}\ot x_0-du_{i+2}\ot x_1), \ \ 0\le i<n-1,$$
which can be identified with the basis $(x_0^ix_1^{n-2-i})$ of $S_{n-2}$.
\ed

Thus, canonical ribbons that are thickenings of $C_r=\iota_n(\P^1)$ are
parametrized by nonzero linear functionals on $H^0(\P^1,\OO(n-2))$. We
denote by $\RR_\la$ the ribbon associated with $\la\in
H^0(\P^1,\OO(n-2))^*$.

Now we are going to describe quadrics through canonical ribbons.

For a finite-dimensional vector space $V$ we denote by $S_2V$ the subspace
of symmetric tensors in $V^{\ot 2}$ (whereas the usual symmetric square,
$S^2V$, is the quotient of $V^{\ot 2}$), so that we have a universal
quadratic form $V\to S_2V$.

\begin{lem}\label{quadratic-map-lem} 
(i) Let $(x_0, x_1)$ be a basis of $H^0(\P^1,\OO(1))$. Consider the quadratic map
$$Q:H^0(\P^1,\OO(n-2))\to S^2H^0(\P^1,\OO(n)): f\mapsto (x_0^2f)(x_1^2f)-(x_0x_1f)^2.$$
This gives an embedding 
$$S_2H^0(\P^1,\OO(n-2))\to S^2H^0(\P^1,\OO(n))=H^0(\P^n,\OO(2))$$ 
whose image is equal to $H^0(\P^n,\II_{C_r}(2))$.

\noindent
(ii) Let us consider the natural projection
$$\phi_2:H^0(\P^n,\II_{C_r}(2))\to H^0(\P^n,\II_{C_r}/\II_{C_r}^2(2))\simeq 
H^0(\P^1,\OO(n-2))\ot H^0(\P^1,\OO(n-2)),$$
where we use the isomorphism of Lemma \ref{normal-bundle-lem}.
Then the composition of $\phi_2$ with the isomorphism $S_2H^0(\P^1,\OO(n-2))\rTo{\sim}H^0(\P^n,\II_{C_r}(2))$
constructed in (i), is the natural embedding of the subspace of symmetric tensors.
\end{lem}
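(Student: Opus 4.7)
For part (i), the first step is polarization. Since $Q(f)$ is visibly quadratic in $f$, it extends to a symmetric bilinear form
$$2B(f,g) = (x_0^2 f)(x_1^2 g) + (x_0^2 g)(x_1^2 f) - 2(x_0 x_1 f)(x_0 x_1 g),$$
which induces a linear map $\tilde{Q} : S_2 H^0(\P^1, \OO(n-2)) \to S^2 H^0(\P^1, \OO(n))$. I would verify the image lies in $H^0(\P^n, \II_{C_r}(2))$ by pulling back along the Veronese: since $\iota_n^* u_i = x_0^i x_1^{n-i}$, both $(x_0^2 f)(x_1^2 g)$ and $(x_0 x_1 f)(x_0 x_1 g)$ restrict to $x_0^2 x_1^2 f g \in H^0(\P^1, \OO(2n))$, so the symmetric combination vanishes on $C_r$. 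A dimension count from the restriction sequence then gives
$$\dim H^0(\P^n, \II_{C_r}(2)) = \binom{n+2}{2} - (2n+1) = \binom{n}{2} = \dim S_2 H^0(\P^1, \OO(n-2)),$$
so establishing that $\tilde{Q}$ maps isomorphically onto $H^0(\P^n, \II_{C_r}(2))$ reduces to proving injectivity.

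For part (ii), I would compute $\phi_2 \circ \tilde{Q}$ in the monomial basis. With $f = x_0^a x_1^{n-2-a}$, $g = x_0^b x_1^{n-2-b}$, and $u_i = x_0^i x_1^{n-i}$, expansion gives
$$2 B(f, g) = u_{a+2} u_b + u_{b+2} u_a - 2 u_{a+1} u_{b+1}.$$
To project onto $N^\vee(2)|_{C_r}$, I would use the explicit generators of $N^\vee$ from Lemma \ref{normal-bundle-lem}, namely $\beta_i = du_i \otimes x_0^2 - 2 du_{i+1} \otimes x_0 x_1 + du_{i+2} \otimes x_1^2$, which are identified with the basis $x_0^i x_1^{n-2-i}$ of $H^0(\P^1, \OO(n-2))$. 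The plan is to take the differential $d(2B(f,g))$, view it inside $\iota_n^*\Omega_{\P^n}(2)|_{C_r}$, and decompose it as $\sum_i \beta_i \cdot h_i$ with $h_i \in H^0(\P^1, \OO(n-2))$ modulo the tangential part; the resulting element in $H^0(\P^1, \OO(n-2))^{\otimes 2}$ should come out to be the symmetric tensor $f \otimes g + g \otimes f$.

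The main obstacle is carrying out this last identification, which requires careful tracking of both tensor factors (the $\beta$-index and the $\OO_{\P^1}(n-2)$-coefficient) through the graded-module description of the cotangent sequence used to prove Lemma \ref{normal-bundle-lem}, in order to verify the symmetry of the answer. Once it is done, the composition $\phi_2 \circ \tilde{Q}$ is the natural inclusion $S_2 H^0(\P^1, \OO(n-2)) \hookrightarrow H^0(\P^1, \OO(n-2))^{\otimes 2}$, which is manifestly injective; combined with the dimension match above, this produces both the isomorphism asserted in (i) and the identification in (ii).
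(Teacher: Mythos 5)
Your proposal is correct and follows the paper's own route: the same dimension count reducing (i) to injectivity of the linear map on $S_2$, the same reduction of that injectivity to (ii), and the same computation of $\phi_2$ via the explicit conormal generators $\beta_i$ of Lemma \ref{normal-bundle-lem}. If anything, your polarized check on pairs of monomials $f=x_0^ax_1^{n-2-a}$, $g=x_0^bx_1^{n-2-b}$ is more complete than the paper's, which records only the diagonal elements $Q(x_0^ix_1^{n-2-i})=u_iu_{i+2}-u_{i+1}^2\mapsto x_0^ix_1^{n-2-i}\beta_i$ and leaves the polarization implicit.

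The one incompleteness is the step you defer as ``the main obstacle'': as written, the central claim of (ii) is predicted rather than proved. But it is not an obstacle, and you should simply carry it out --- no further excavation of the graded-module construction is needed beyond the formula for $\beta_i$ that you already quoted. The projection $\phi_2$ is $q\mapsto dq|_{C_r}$, i.e.\ $u_iu_j\mapsto du_i\ot\iota^*u_j+du_j\ot\iota^*u_i$ in the notation of Lemma \ref{normal-bundle-lem}. Applied to $2B(f,g)=u_{a+2}u_b+u_{b+2}u_a-2u_{a+1}u_{b+1}$ this gives
\begin{align*}
&\bigl(du_a\ot\iota^*u_{b+2}-2du_{a+1}\ot\iota^*u_{b+1}+du_{a+2}\ot\iota^*u_b\bigr)\\
&\quad+\bigl(du_b\ot\iota^*u_{a+2}-2du_{b+1}\ot\iota^*u_{a+1}+du_{b+2}\ot\iota^*u_a\bigr),
\end{align*}
and since $\iota^*u_{b+j}=g\cdot x_0^jx_1^{2-j}$ for $j=0,1,2$ (and likewise with $a$ and $f$), the two parenthesized groups are exactly $g\cdot\beta_a$ and $f\cdot\beta_b$. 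Under the identification $\beta_i\leftrightarrow x_0^ix_1^{n-2-i}$ this is precisely $f\ot g+g\ot f$, confirming your prediction; injectivity, and hence (i), follow. One small caution: since the paper makes no characteristic assumption here, do not construct $\tilde Q$ by polarizing $Q$ (your definition of $B$ requires dividing by $2$); instead define a linear map on $V^{\ot 2}$ by $f\ot g\mapsto (x_0^2f)(x_1^2g)-(x_0x_1f)(x_0x_1g)$ and restrict it to the subspace $S_2V$ of symmetric tensors --- your displayed formula is then its value on $f\ot g+g\ot f$, valid in every characteristic.
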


\Pf . By the dimension count, for (i) it is enough to check that $Q$ is injective. Thus, it is enough to prove (ii).
In the notation of the proof of Lemma \ref{normal-bundle-lem} we have the map
$$H^0(\II_{C_r}(2))\to H^0(N^\vee(2))$$
sends $u_{i+2}\ot u_i-u_{i+1}^2\in H^0(\II_{C_r}(2))$ to 
$$du_{i+2}\ot \iota^*u_i+du_i\ot \iota^*u_{i+2}-2du_{i+1}\ot \iota^*u_{i+1}=x_0^ix_1^{n-2-i}\beta_i.$$
It remains to note that 
$$Q(u_i)=u_{i+2}u_i-u_{i+1}^2,$$
so we deduce the required compatibility.
\ed

\begin{prop}\label{deg-qu-prop}
(i) For $\la\in H^0(\P^1,\OO(n-2))^*$, 
a quadric $q\in S_2H^0(\P^1,\OO(n-2))\simeq H^0(\P^n,\II_{C_r}(2))$ vanishes on a canonical ribbon $\RR_\la$ if and only if
$\iota_\la(q)=0$, where 
$$\iota_\la: S_2H^0(\P^1,\OO(n-2))\to H^0(\P^1,\OO(n-2))$$
is induced by the map $\la\ot\id: H^0(\P^1,\OO(n-2))^{\ot 2}\to H^0(\P^1,\OO(n-2))$.

\noindent
(ii) A quadric $q\in S_2H^0(\P^1,\OO(n-2))$ vanishes on some canonical ribbon if and only if $\det(q)=0$.
\end{prop}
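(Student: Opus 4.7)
My strategy is to deduce (i) directly from the geometric description of $\RR_\la$ together with Lemma \ref{quadratic-map-lem}(ii), and then to obtain (ii) by pure linear algebra from (i). Set $W := H^0(\P^1,\OO(n-2))$ throughout.

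For part (i), I would first unwind the definition of $\RR_\la$ as a subscheme of the first-order neighborhood $V(\II_{C_r}^2) \subset \P^n$: by construction, $\RR_\la$ corresponds to the kernel of the surjection $N^\vee = \II_{C_r}/\II_{C_r}^2 \to \om_{C_r}(-1)$ which, under the identification $N^\vee \simeq W \ot \om_{C_r}(-1)$ of Lemma \ref{normal-bundle-lem}, is $\la \ot \id$. A quadric $q \in H^0(\II_{C_r}(2))$ thus vanishes on $\RR_\la$ if and only if the image $\phi_2(q) \in H^0(N^\vee(2)) \simeq W \ot W$ maps to zero in $H^0(\om_{C_r}(1)) \simeq W$ under $\la \ot \id$. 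Since Lemma \ref{quadratic-map-lem}(ii) identifies $\phi_2$, after using the isomorphism $S_2 W \simeq H^0(\P^n,\II_{C_r}(2))$ from Lemma \ref{quadratic-map-lem}(i), with the natural inclusion $S_2 W \hra W \ot W$ of symmetric tensors, for $q \in S_2 W$ the vanishing condition becomes $(\la \ot \id)(q) = \iota_\la(q) = 0$.

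For part (ii), I would view $q \in S_2 W$ as a symmetric linear map $\tilde q: W^* \to W$ in the standard way, so that $\iota_\la(q) = \tilde q(\la)$ for all $\la \in W^*$. The existence of a nonzero $\la$ with $\iota_\la(q) = 0$ is then equivalent to $\ker(\tilde q) \neq 0$, equivalently to $\det(q) = \det(\tilde q) = 0$. Combined with (i), this gives the equivalence.

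The steps are essentially bookkeeping; the potential pitfall is making sure the identifications from Lemma \ref{normal-bundle-lem} and Lemma \ref{quadratic-map-lem} are tensorially compatible so that $\phi_2$, the map on global sections induced by the surjection $\la\ot\id: N^\vee \to \om_{C_r}(-1)$, and the symmetric-tensor inclusion all line up to give $\iota_\la(q) = (\la\ot\id)(q)$ — which is precisely what Lemma \ref{quadratic-map-lem}(ii) was set up to deliver.
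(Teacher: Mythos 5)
Your proof is correct and takes essentially the same route as the paper's: for (i) you identify vanishing on $\RR_\la$ with lying in the kernel of $H^0(\P^n,\II_{C_r}(2))\to H^0(\II_{C_r}/\II_{C_r}^2(2))\rTo{\la}H^0(C_r,\om_{C_r}(1))$ and then invoke Lemma \ref{quadratic-map-lem}(ii) to see this map is $\iota_\la$, exactly as the paper does via its morphism of exact sequences. Your part (ii), spelling out $q$ as a symmetric map $\tilde q:W^*\to W$ with $\iota_\la(q)=\tilde q(\la)$, is just a more explicit phrasing of the paper's one-line deduction.
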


\Pf . (i) By definition, we have a morphism of exact sequences
\begin{diagram}
0&\rTo{}& \II_{C_r}&\rTo{}&\OO_{\P^n}&\rTo{}&\OO_{C_r}&\rTo{}&0\\
&&\dTo{}&&\dTo{}&&\dTo{\id}\\
0&\rTo{}& \om_{C_r}(-1)&\rTo{}& \OO_{\RR_\la}&\rTo{}& \OO_{C_r}&\rTo{}& 0
\end{diagram}
where the left vertical arrow is induced by $\la$.
Now $q$ vanishes on $\RR_\la$ if an only if it lies in the kernel of the map
$$H^0(\P^n,\II_{C_r}(2))\to H^0(\P^n,\II_{C_r}/\II_{C_r}^2(2))\rTo{\la} H^0(C_r,\om_{C_r}(1)).$$
By Lemma \ref{quadratic-map-lem}(ii), the latter map can be identified with
the composition
$$S_2H^0(\P^1,\OO(n-2))\hra H^0(\P^1,\OO(n-2))^{\ot 2}\rTo{\la\ot\id} H^0(\P^1,\OO(n-2)),$$
which is exactly $\iota_\la$.

\noindent
(ii) This immediately follows from (i): $q$ is degenerate if and only if $\iota_\la(q)=0$ for some $\la\neq 0$.
\ed

\subsection{Ribbons as hyperelliptic limits}

Let $p:C\to \Spec(R)$ be a family of smooth curves of genus $g\ge 3$ over a dvr $R$, such that
the general fiber $C_K$ is non-hyperelliptic, while the special fiber $C_k$ is hyperelliptic.

We consider the relative projective space $\P(\VV)$ associated to
the Hodge bundle $\VV=p_*(\om_{C/R})$ over $\Spec(R)$.
Let $\HH_g$ denote the relative Hilbert scheme of subschemes in $\P(\VV)$ having the same Hilbert scheme
as a canonical curve of genus $g$.
Since $C_K$ is non-hyperelliptic, the image of the canonical embedding 
$$f_K:C_K\to \P(\VV_K)$$ 
gives us a point $[f_K(C_K)]\in\HH_g(K)$.
Since $\HH_g$ is proper over $R$, it extends to a unique point in $\HH_g(R)$.
We will denote the corresponding point in $\HH_g(k)$ as $\lim[f(C)]$.

\begin{lem}\label{limit-lem} 
The point $\lim[f(C)]$ corresponds to a canonical ribbon supported on a rational normal curve
$C_r$ which is the image of the canonical map of $C_k$.
\end{lem}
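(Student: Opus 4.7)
The plan is to identify $\lim[f(C)]$ with the schematic closure $Y := \overline{f_K(C_K)} \subset \P(\VV)$ and verify that $Y_k$ is a canonical ribbon on $C_r$. Since $\OO_Y$ is the image of $\OO_{\P(\VV)}\to f_*\OO_C$ (which is itself $R$-flat and coherent, as $f$ is projective with finite fibers and $\OO_C$ is $R$-flat), $\OO_Y$ is $R$-flat with generic fiber $\OO_{Y_K}=\OO_{f_K(C_K)}$, and therefore $[Y_k]=\lim[f(C)]$. Moreover $|Y_k|=|f_k(C_k)|=|C_r|$, so $(Y_k)_{\red}=C_r$; by flatness, $Y_k$ has the canonical Hilbert polynomial $(2g-2)t-g+1$.

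For the ribbon structure, I would use the hyperelliptic cover $\pi:C_k\to\P^1$: $\pi_*\OO_{C_k}\simeq\OO_{\P^1}\oplus\OO_{\P^1}(-g-1)$, so pushing through the Veronese (combined with Lemma \ref{normal-bundle-lem}) gives
\[
0\to\OO_{C_r}\to (f_k)_*\OO_{C_k}\to\om_{C_r}(-1)\to 0.
\]
Base-changing the inclusion $\OO_Y\hookrightarrow f_*\OO_C$ yields a map $\OO_{Y_k}\to (f_k)_*\OO_{C_k}$; since $\OO_{X_k}\twoheadrightarrow\OO_{Y_k}$ is surjective, its image equals the image of $\OO_{X_k}\to (f_k)_*\OO_{C_k}$, namely the subring $\OO_{C_r}$. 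Letting $\NN$ be the kernel, we get
\[
0\to\NN\to\OO_{Y_k}\to\OO_{C_r}\to 0,
\]
and flatness forces $\chi(\NN(t))=(g-1)t-g$, matching the Hilbert polynomial of $\om_{C_r}(-1)$.

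To promote this numerical identification to a genuine ribbon structure, I would invoke relative duality for the flat projective family $Y\to \Spec R$ (with smooth generic fiber): $\om_{Y/R}$ is a line bundle extending $\om_{Y_K}=\OO_{Y_K}(1)$, so $Y_k$ is Gorenstein with $\om_{Y_k}=\OO_{Y_k}(1)$, in particular Cohen--Macaulay of dimension $1$. Then at each $p\in C_r$, Auslander--Buchsbaum makes $\OO_{Y_k,p}$ free of rank $2$ over the DVR $\OO_{C_r,p}$, with basis $\{1,y\}$ and relation $y^2-ay-b=0$. Irreducibility of $Y_k$ and non-reducedness of the generic stalk (a local Artinian ring of length $2$) force $a^2+4b=0$; away from characteristic $2$, the substitution $y':=y-a/2$ yields $\OO_{Y_k,p}=\OO_{C_r,p}[y']/(y')^2$, so locally $Y_k$ is a ribbon on $C_r$. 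Globally this means $\NN^2=0$ and $\NN$ is a line bundle on $C_r\simeq\P^1$; the Euler characteristic $-g$ then forces $\NN\simeq\OO_{\P^1}(-g-1)\simeq\om_{C_r}(-1)$, so $Y_k$ is a canonical ribbon supported on $C_r$. The main obstacle is securing this Cohen--Macaulay input for the flat limit, after which the local completion-of-square argument cleanly produces the ribbon structure in characteristic $\neq 2$.
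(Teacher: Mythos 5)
The first half of your proposal coincides with the paper's own argument: the paper also works with $F=f_*\OO_C$ (flat since $f$ is finite onto its image), takes $F'=\im(\OO_{\P(\VV)}\to F)$ as the structure sheaf of the flat limit $\ov{C}$, uses $F\ot_R k\simeq\OO_{C_r}\oplus\om_{C_r}(-1)$, and produces exactly your exact sequence $0\to\NN\to\OO_{\ov{C}}\to\OO_{C_r}\to 0$ together with the computation that $\NN$ has the Hilbert polynomial of $\om_{C_r}(-1)$. The genuine gap is the step you yourself flag as the ``main obstacle'': you cannot invoke relative duality for $Y\to\Spec(R)$ to conclude that $\om_{Y/R}$ is a line bundle and hence that $Y_k$ is Gorenstein (or even Cohen--Macaulay). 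The relative dualizing complex of a flat projective family is a sheaf (resp.\ an invertible sheaf) only when the fibers are already Cohen--Macaulay (resp.\ Gorenstein), which is precisely what is in question; smoothness of the generic fiber gives no control over the special fiber, since these properties are not inherited by flat limits. Indeed, the schematic closure of a family of smooth curves can have a special fiber with an embedded point --- the standard example is a family of twisted cubics degenerating to a nodal plane cubic with an embedded point at the node --- and ruling out exactly this kind of non-CM behavior for $Y_k$ is the crux of the lemma. So your argument is circular at its key step; everything after it (Auslander--Buchsbaum, freeness of rank $2$ over $\OO_{C_r,p}$, completing the square) is fine but rests on the unproved CM input.

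The paper closes this gap by a different mechanism, worth comparing with your plan. From the surjective morphism of short exact sequences (from $0\to\II_{C_r}\to\OO_{\P(\VV_k)}\to\OO_{C_r}\to 0$ onto $0\to K\to\OO_{\ov{C}}\to\OO_{C_r}\to 0$, with $K=\NN$) one gets a surjection $\II_{C_r}\to K$, and a $\Tor_1^R(G,k)$ argument shows $K$ is supported on $C_r$. Since $K$ has the Hilbert polynomial of $\om_{C_r}(-1)$, it surjects onto a line bundle on $C_r$ of degree at most $\deg\om_{C_r}(-1)$; but any nonzero map from $\II_{C_r}$ to such a line bundle factors through $\II_{C_r}/\II_{C_r}^2\simeq\om_{C_r}(-1)^{\oplus g-2}$ (Lemma \ref{normal-bundle-lem}) and is therefore automatically surjective onto $\om_{C_r}(-1)$. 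Comparing Hilbert polynomials then forces $K\simeq\om_{C_r}(-1)$, so $\ov{C}$ is the canonical ribbon attached to the induced surjection $N^\vee\to\om_{C_r}(-1)$ --- with no Cohen--Macaulay hypothesis and no appeal to duality. Note also that this argument is characteristic-free, whereas your completion-of-the-square step requires characteristic $\neq 2$; since the lemma is stated and used (in Theorems \ref{quadric-hyperell-thm} and \ref{rel-hyperell-thm}) without any characteristic restriction, your route would prove a strictly weaker statement even if the CM input could be secured.
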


\Pf . Let $f:C\to \P(\VV)$ be the canonical map for our family,
Then $F=f_*\OO_C$ is a coherent sheaf on $\P(\VV)$, flat over $R$, with generic member being
the structure sheaf of the canonical image.
Furthermore, the restriction of $F$ to $\P(\VV_k)$ is 
$$F\ot_R k\simeq \OO_{C_r}\oplus \omega_{C_r}(-1),$$
where $C_r\sub \P(\VV_k)$ is a rational normal curve.

Let $F'\sub F$ be the image of the canonical morphism $\OO_{\P(\VV)}\to F$. Then $F'$ is still flat over $R$,
and is given by the structure sheaf on the canonical image over $K$. Thus, $F'\ot_R k=\OO_{\ov{C}}$
is precisely the structure sheaf of $\ov{C}=\lim[f(C)]$.

Let us consider the cokernel $G$ of the map $\OO_{\P(\VV)}\to F$, so that we have an exact sequence
\begin{equation}\label{F'-ex-seq}
0\to F'\to F \to G\to 0
\end{equation}
Note that $G$ is supported on $\P(\VV_k)$ and 
$$G\ot_R k\simeq \coker(\OO_{\P(\VV_k)}\to F|_{\P(\VV_k)})\simeq \omega_{C_r}(-1).$$ 
It follows that $G$ is supported on $C_r\sub\P(\VV_k)$.

Now we have the long exact sequence obtained from \eqref{F'-ex-seq},
$$\Tor_1^R(G,k)\to F'\ot_R k\to F\ot_R k\to G\ot_R k\to 0.$$
Thus, from our identification of $F\ot_R k$ we get an exact sequence
$$\Tor_1^R(G,k)\to F'\ot_R k\to \OO_{C_r}\to 0$$
Since $G$ is supported on $C_r$, the same is true for $\Tor_1^R(G,k)$.

Hence, we have a surjective morphism of exact sequences 
of coherent sheaves on $\P(\VV_k)$,
\begin{diagram}
0&\rTo{}& \II_{C_r}&\rTo{}&\OO_{\P(\VV_k)}&\rTo{}&\OO_{C_r}&\rTo{}&0\\
&&\dTo{}&&\dTo{}&&\dTo{\id}\\
0&\rTo{}& K&\rTo{}& \OO_{\ov{C}}&\rTo{}& \OO_{C_r}&\rTo{}& 0
\end{diagram}
where $K$ is supported on $C_r$.
Furthermore, by flatness, $K$ 
has the same Hilbert polynomial as $\omega_{C_r}(-1)$.  In particular, 
there is a surjective map from $K$ to a line bundle on $C_r$ of degree at 
most that of $\omega_{C_r}(-1)$, which may thus be identified 
noncanonically with a subsheaf of $\omega_{C_r}(-1)$.  
But any map $\II_{C_r}\to\omega_{C_r}(-1)$ factors
through $\II_{C_r}/\II_{C_r}^2\simeq \om_{C_r}(-1)^{g-2}$ (see Lemma \ref{normal-bundle-lem})
and thus is is surjective. 
Hence, the composition 
$$\II_{C_r}\to K\to \omega_{C_r}(-1)$$ 
is surjective, so that $K\to \omega_{C_r}(-1)$ is 
surjective.  Since $K$ and $\omega_{C_r}(-1)$ have the same Hilbert polynomials, it follows that 
$K\cong \omega_{C_r}(-1)$, so $\ov{C}$ is a canonical ribbon.
\ed


\begin{rmk} 
In the case $g=3$, every conic $C_r\sub\P^2$
is contained in a unique canonical ribbon in $\P^2$, defined by the ideal $\II_{C_r}^2$.
By Lemma \ref{limit-lem}, for every $1$-parameter family which is generically non-hyperelliptic
and has a hyperelliptic curve $C_0$ as a central fiber, the corresponding family in the relative Hilbert scheme has this canonical
ribbon as a limit.
\end{rmk}

\section{Hyperelliptic limits of canonical relations}

\subsection{Obstruction map associated with a hyperelliptic curve}\label{1st-order-obstruction-sec}

\begin{lem}\label{local-obs-lem}
Let $f:A\to C$ be a ring homomorphism with kernel $I$, and let $B=A/I$ with 
induced morphisms $g:A\to B$, $h:B\to C$.  Then given a first-order 
deformation $\wt{C}$ of $C$ and a homomorphism $\wt{f}:A[\epsilon]/\epsilon^2\to \wt{C}$ deforming $f$, 
an element $x\in I$ extends to an element in the kernel of $\wt{f}$ if and only if
$$\lan [\wt{f}], x\ran=0 \text{ in } C/B,$$ 
where $[\wt{f}]$ is the class of $\wt{f}$ in $\Ext^1_C(L_{C/A},C)$,
and the pairing $\lan\cdot,\cdot\ran$ is induced by the composition
$$\Ext^1_C(L_{C/A},C)\to \Hom_C(I/I^2\otimes_B C,C)
      \cong \Hom_B(I/I^2,C)\to \Hom_B(I/I^2,C/B)\cong \Hom_A(I,C/B),$$
where $C/B$ is the quotient as a $B$-module.
Furthermore, the above pairing is determined by 
$$\lan [\wt{f}],x\ran=-\frac{\wt{f}(x)}{\eps}\mod B,$$
where $\wt{f}(x)\in \eps \wt{C}$ and we use the isomorphism $C\rTo{\eps} \eps\wt{C}$.
\end{lem}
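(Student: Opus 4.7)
The plan is to first verify the extension criterion by a direct computation in $\wt{C}$, and then identify the resulting obstruction with the abstract pairing through $L_{C/A}$. For the direct part, any element of $A[\eps]/\eps^2$ reducing to $x\in I$ modulo $\eps$ has the form $\ti{x}=x+\eps a$ for some $a\in A$. Since $f(x)=0$, the element $\wt{f}(x)\in \wt{C}$ reduces to $0$ modulo $\eps$, and by flatness of $\wt{C}$ it can be uniquely written as $\eps y$ with $y\in C$ (using the isomorphism $C\rTo{\eps}\eps\wt{C}$). Then $\wt{f}(\ti{x})=\eps(y+f(a))$ vanishes in $\wt{C}$ if and only if $y=-f(a)$, and such an $a\in A$ exists precisely when $y$ lies in $f(A)=h(B)\sub C$, equivalently when $y\equiv 0 \mod B$. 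This simultaneously gives the extension criterion and the explicit formula $\lan [\wt{f}],x\ran = -\wt{f}(x)/\eps \mod B$.

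Next, I would verify that the assignment $\phi:x\mapsto -\wt{f}(x)/\eps$ defines a $B$-linear map $I/I^2\to C$ (and hence, after projection to $C/B$, a well-defined element of $\Hom_A(I,C/B)$). It kills $I^2$ because $\wt{f}(I)\cd\wt{f}(I)\sub \eps\wt{C}\cd\eps\wt{C}=0$, and for $a\in A$, $x\in I$ the identity $\wt{f}(a)\wt{f}(x)=f(a)\wt{f}(x)$ (the $\eps$-part of $\wt{f}(a)$ is annihilated by $\wt{f}(x)\in\eps\wt{C}$) gives $A$-linearity.

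To match this explicit $\phi$ with the abstract pairing in the statement, I would use the standard dictionary under which the pair $(\wt{C},\wt{f})$ determines a class $[\wt{f}]\in \Ext^1_C(L_{C/A},C)$ classifying the square-zero extension of $C$ by the $C$-module $C$ in $A$-algebras. Combining the transitivity triangle $L_{B/A}\ot^{\mathbf{L}}_B C \to L_{C/A} \to L_{C/B}$ with the identification $L_{B/A}\simeq I/I^2[1]$ for $B=A/I$ produces the map $\Ext^1_C(L_{C/A},C)\to \Hom_C(I/I^2\ot_B C,C)\cong \Hom_B(I/I^2,C)$ appearing in the statement, and chasing $[\wt{f}]$ through it recovers precisely $\phi$. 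The main subtlety is the sign and shift bookkeeping in this last identification; a convenient route is to reduce by naturality to the principal case $I=(x)$, where $L_{B/A}$ is modeled by the two-term complex $A\xrightarrow{x}A$ in degrees $-1,0$, and the extension class can be read off directly from $\wt{f}(x)/\eps\in C$.
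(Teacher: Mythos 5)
Your first two steps are correct, and they take a genuinely more intrinsic route than the paper does for the same assertions: the paper fixes the explicit presentation $\wt{C}=A[\epsilon][C]/(J',\epsilon^2)$, where $J=\ker(A[C]\to C)$ and $J'$ is generated by the elements $j+\phi(j)\epsilon$ and $j\epsilon$ for a representative $\phi\in\Hom_C(J/J^2,C)$ of the deformation class, and it reads off both the extension criterion ($i$ extends iff $\phi(i)\in B$) and the sign formula from the congruence $i\equiv-\phi(i)\epsilon$ modulo $J'$; you instead obtain the criterion and the formula for the explicit map $\phi(x)=-\wt{f}(x)/\eps$ directly from flatness of $\wt{C}$, with no presentation chosen. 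That part of your argument is complete and clean.

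The gap is in your third step, which is where the real content of the lemma sits: one must show that the image of $[\wt{f}]$ under the composition $\Ext^1_C(L_{C/A},C)\to\Hom_C(I/I^2\ot_B C,C)\cong\Hom_B(I/I^2,C)$ is exactly your $\phi$, and you only assert the chase (``recovers precisely $\phi$'') rather than perform it, while the device you propose for the bookkeeping fails as written, for two reasons. First, the two-term complex $A\to A$ given by multiplication by $x$, placed in degrees $-1,0$, is not a model of $L_{B/A}$: its $H^0$ is $B$ and its $H^{-1}$ is the annihilator of $x$, whereas $L_{B/A}$ has $H^0=\Omega_{B/A}=0$ and $H^{-1}=I/I^2$; the correct statement is $\tau_{\ge-1}L_{B/A}\simeq I/I^2[1]$, concentrated in degree $-1$ (this also corrects your earlier ``$L_{B/A}\simeq I/I^2[1]$'', which is harmless here since only the truncation matters for maps into a module). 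Second, ``reduce by naturality to the principal case $I=(x)$'' is incompatible with the hypotheses of the lemma: $I$ is forced to be the full kernel of $f$, and $B=A/I$ enters the target $C/B$ of the pairing, so you cannot shrink $I$ while keeping $C$ fixed. The naturality you actually need is along a ring map $k[T]\to A$, $T\mapsto x$, which replaces $B$ by $k$; that requires checking functoriality of the entire composition in $A$ together with the compatibility $C/k\to C/B$, and even then the boundary-map computation for $k[T]$ remains to be done, so nothing is gained over doing the chase directly. To close the gap, either invoke the standard compatibility (Illusie) between the identification of $\Ext^1_C(L_{C/A},C)$ with square-zero $A$-algebra extensions of $C$ by $C$ and the transitivity triangle, under which the map to $\Hom_B(I/I^2,C)$ is literally ``evaluate the structure morphism on $I$'' up to sign, or argue as the paper does: represent $[\wt{f}]$ by an explicit $\phi\in\Hom_C(J/J^2,C)$, for which the restriction along $I\sub J$ visibly realizes the displayed composition.
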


\Pf .  Let $J$ be the kernel of the morphism $A[C]\to C$.  There is a short 
exact sequence
$$0\to \Hom_C(\Omega_{A[C]/A}\otimes C,C)
 \to \Hom_C(J/J^2,C)
 \to \Ext^1_C(L_{C/A},C)
 \to 0$$
and the deformation of $f$ corresponding to $\phi\in \Hom_C(J/J^2,C)$ is 
given by taking $\wt{C}$ to be the quotient $A[\epsilon][C]/(J',\epsilon^2)$, 
where $J'$ is generated by the elements $j+\phi(j)\epsilon$ and $j\epsilon$ 
for $j\in J$  (see, e.g., Lemma 82.2.3 of the Stacks Project).  An element $i\in I$ deforms iff there is an 
element $i_1\in A$ such that $i+i_1\epsilon$ is in $J'$, or equivalently iff 
$\phi(i)\in B$.  The restriction map
$$\Hom_C(J/J^2,C)\to \Hom_B(I/I^2,C)$$
is the composition of natural maps
$$\Hom_C(J/J^2,C)\to \Ext^1_C(L_{C/A},C)\to \Hom_B(I/I^2,C)$$
and thus the obstruction map is as described.

The formula for the pairing follows from the fact that for $i\in I\sub J$ we have
$$i\equiv -\phi(i)\eps \mod \wt{J}.$$
\ed

\begin{prop}\label{general-obstr-prop} 
Let $f:X\to S$ be an affine morphism of schemes with scheme-theoretic image
$i:Y\to S$ and residual factor $g:X\to Y$, and let $\wt{f}:\wt{X}\to
S[\eps]/(\eps^2)$ be a first order deformation of $f$ of class
$[\wt{f}]\in \Ext^1_X(L_{X/S},\sO_X)$.

\noindent
(i) For any affine open subset 
$U\subset S$, the obstruction to extending an element $x\in\Gamma(U;I_Y)$ to 
an element in the kernel of 
$$\wt{f}^*:\Gamma(U,\sO_S[\eps]/(\eps^2))\to \Gamma(U,\wt{f}_*\sO_{\wt{X}})$$
is given by the pairing
$$\lan [\wt{f}], x\ran:=\obs([\wt{f}])(x)\in \Gamma(U;f_*\sO_X/i_*\sO_Y),$$
coming from the composition
\begin{equation}\label{general-obstruction-map-eq}
\obs:\Ext^1_X(L_{X/S},\sO_X)
\to
\Hom_X(g^*(I_Y/I_Y^2),\sO_X)
\to
\Hom_S(I_Y,f_*\sO_X/i_*\sO_Y).
\end{equation}

\noindent
(ii) Let $\LL$ be a line bundle on $S$.  Assume that $H^1(S,I_Y\ot\LL)=0$. Then the obstruction to extending 
$x\in H^0(S,I_Y\ot\LL)$
to an element in the kernel of
$$\wt{f}^*:H^0(S,\LL[\eps]/(\eps^2))\to \Gamma(\wt{X},\wt{f}^*\LL)$$
is given by the pairing 
\begin{equation}\label{line-bun-obstr-pairing}
\Ext^1_X(L_{X/S},\sO_X)\times H^0(S,I_Y\ot\LL)\to H^0(S,f_*\sO_X/i_*\sO_Y\ot \LL):([\wt{f}],x)\mapsto
\lan [\wt{f}],x\ran:=\obs([\wt{f}])(x).
\end{equation}
We also have
\begin{equation}\label{line-bun-obstr-pairing-formula}
\lan \wt{f},x\ran=-\frac{\wt{f}^*(x)}{\eps} \mod \sO_Y.
\end{equation}
\end{prop}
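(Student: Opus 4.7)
The plan is to deduce part (i) by a direct Zariski-local application of Lemma \ref{local-obs-lem}, and then to bootstrap (ii) from (i) by twisting by $\LL$ and invoking the cohomological vanishing hypothesis.

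For part (i), let $U\subset S$ be an affine open, set $A=\Gamma(U,\sO_S)$, and (using that $f$ is affine) $C=\Gamma(f^{-1}(U),\sO_X)$. Then $I:=\Gamma(U,I_Y)=\ker(A\to C)$ by definition of scheme-theoretic image, and $B=A/I=\Gamma(U,i_*\sO_Y)$. The first-order deformation $\wt{f}$ restricts to a deformation of $A\to C$ whose class in $\Ext^1_C(L_{C/A},C)$ is the restriction of $[\wt{f}]\in\Ext^1_X(L_{X/S},\sO_X)$. Lemma \ref{local-obs-lem} then produces an obstruction element of $C/B=\Gamma(U,f_*\sO_X/i_*\sO_Y)$, together with the explicit formula $-\wt{f}^*(x)/\eps\bmod B$. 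Unwinding the sheafified composition in \eqref{general-obstruction-map-eq} on sections over $U$ reproduces precisely the obstruction map of the lemma, giving (i).

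For part (ii), note that $f^*(x)\in H^0(X,f^*\LL)$ vanishes (since $x\in I_Y\ot\LL$ and $f$ factors through $Y$), so $\wt{f}^*(x)\in\eps\cdot\wt{f}^*\LL$; dividing by $\eps$ gives $z=\wt{f}^*(x)/\eps\in H^0(X,f^*\LL)=H^0(S,f_*\sO_X\ot\LL)$. A global lift of the form $x+\eps y$, with $y\in H^0(S,\LL)$, lies in the kernel of $\wt{f}^*$ iff $f^*(y)=-z$. Taking $H^0$ of
$$0\to i_*\sO_Y\ot\LL\to f_*\sO_X\ot\LL\to (f_*\sO_X/i_*\sO_Y)\ot\LL\to 0$$
shows that $-z$ lifts to $H^0(Y,i^*\LL)$ precisely when its class in $H^0(S,(f_*\sO_X/i_*\sO_Y)\ot\LL)$ vanishes, while the hypothesis $H^1(S,I_Y\ot\LL)=0$ applied to $0\to I_Y\ot\LL\to\LL\to i_*\sO_Y\ot\LL\to 0$ then promotes any such preimage to some $y\in H^0(S,\LL)$ with $f^*(y)=-z$. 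Hence the obstruction is exactly the class of $-z=-\wt{f}^*(x)/\eps$ in $H^0(S,(f_*\sO_X/i_*\sO_Y)\ot\LL)$, yielding \eqref{line-bun-obstr-pairing-formula}; comparing with part (i) applied to local trivializations of $\LL$ identifies this class with $\obs([\wt{f}])(x)$, yielding \eqref{line-bun-obstr-pairing}.

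The main obstacle is not conceptual but bookkeeping: one must verify that the sheafified composition $\obs$ in \eqref{general-obstruction-map-eq} really agrees on each affine open with the obstruction map coming from Lemma \ref{local-obs-lem}, and track the various pullback maps $i^*$, $g^*$, $f^*$ together with the sign convention $\wt{f}^*(x)\equiv-\eps\phi(x)$ inherited from the proof of that lemma.
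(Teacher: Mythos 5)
Your proof is correct. Part (i) is handled exactly as in the paper, which simply cites Lemma \ref{local-obs-lem}; you spell out the affine-local dictionary $A=\Gamma(U,\sO_S)$, $C=\Gamma(f^{-1}(U),\sO_X)$, $I=\ker(A\to C)=\Gamma(U,I_Y)$, which is the intended content of that one-line reduction. For part (ii), however, your route is genuinely different from the paper's. The paper argues by \v{C}ech gluing: on an affine cover trivializing $\LL$, part (i) provides local extensions $x_i$ with $\wt{f}^*(x_i)=0$; the differences $x_i-x_j=\eps y_{ij}$ form a $1$-cocycle with values in $I_Y\ot\LL$, the hypothesis $H^1(S,I_Y\ot\LL)=0$ makes it a coboundary, and correcting the $x_i$ glues them into a global extension; the formula \eqref{line-bun-obstr-pairing-formula} is then quoted from the local formula of Lemma \ref{local-obs-lem}. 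You instead work globally: every lift of $x$ is of the form $x+\eps y$ with $y\in H^0(S,\LL)$, killing it under $\wt{f}^*$ amounts to solving $f^*(y)=-z$ with $z=\wt{f}^*(x)/\eps\in H^0(S,f_*\sO_X\ot\LL)$, and solvability is read off from two short exact sequences: the class of $-z$ in $H^0(S,(f_*\sO_X/i_*\sO_Y)\ot\LL)$ is the obstruction to $-z$ lying in $H^0(S,i_*\sO_Y\ot\LL)$ (this direction is hypothesis-free, because $\sO_S\to f_*\sO_X$ factors through $i_*\sO_Y$), and $H^1(S,I_Y\ot\LL)=0$ makes $H^0(S,\LL)\to H^0(Y,i^*\LL)$ surjective, producing the required $y$. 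The cohomological content is the same, but your version buys two things: the explicit formula \eqref{line-bun-obstr-pairing-formula} drops out by construction, since your obstruction literally is the class of $-\wt{f}^*(x)/\eps$ modulo $\sO_Y$, and the role of the vanishing hypothesis is isolated in a single surjectivity statement. What the paper's gluing argument buys is that (ii) reduces wholesale to (i), so no separate identification of the global class with $\obs([\wt{f}])(x)$ is required --- that identification is the remaining step in your write-up, which you correctly dispatch by comparing with part (i) over opens trivializing $\LL$.
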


\Pf . (i) This follows immediately from Lemma \ref{local-obs-lem}.

\noindent
(ii) If an extension of $x$ exists then by (i), $\obs(\xi)|_U=0$ for every open affine $U\sub S$ over which $\LL$ is trivial, Hence,
$\obs(\xi)=0$. 

Conversely, assume that $\obs(\xi)=0$. Pick an open affine covering
$S=\cup_i U_i$ such that $\LL|_{U_i}$ is trivial.  By part (i), over each
$U_i$ there exists an extension $x_i\in \Gamma(U_i,\LL[\eps]/(\eps^2))$ of
$x$, such that $\wt{f}^*(x_i)=0$. Over the intersections $U_i\cap U_j$ we
have $x_i-x_j=\eps y_{ij}$, for some $y_{ij}\in \Gamma(U_i\cap U_j,\LL)$
such that $f^*(y_{ij})=0$. Thus, we have $y_{ij}\in \Gamma(U_i\cap
U_j,I_Y\ot\LL)$, so $(y_{ij})$ is a $1$-cocycle with values in
$I_Y\ot\LL$. By assumption, it is a coboundary, so we can correct our
extensions $x_i$ over $U_i$ so that they glue into a global section of
$\LL[\eps]/(\eps^2)$.

The last formula for the pairing $\lan [\wt{f}],x\ran$ comes from a similar formula in Lemma \ref{local-obs-lem}.
\ed

We can apply the above Proposition to the canonical map $f:C\to \P^{g-1}$ where $C$ is
a hyperelliptic curve of genus $g$, which factors as the composition
$$C\rTo{g} \P^1\rTo{i}\P^{g-1},$$
with $g$ a double covering, and $i$ is the embedding as a rational normal curve.
Let $I_{\P^1}$ denote the ideal of $i(\P^1)$ in $\sO_{\P^{g-1}}$. 
Then we see that the obstruction map \eqref{general-obstruction-map-eq} has form
$$\obs:\Ext^1_C(L_{C/\P^{g-1}},\sO_C)
\to
\Hom_{\P^{g-1}}(I_{\P^1},f_*\sO_C/i_*\sO_{\P^1})
\cong
\Hom_{\P^{g-1}}(I_{\P^1},i_*\det(g_*\sO_C)).$$

We can recompute this map in terms of deformations of $C$.  Since 
the natural map
$$\Hom_{\P^{g-1}}(I_{\P^1}/I_{\P^1}^2,i_*\det(g_*\sO_C))
\to
\Ext^1_{\P^1}(\Omega_{\P^1},\det(g_*\sO_C))$$
is an isomorphism, it suffices to consider the induced map
$$\Ext^1_C(L_{C/\P^{g-1}},\sO_C)
\to
\Ext^1_{\P^1}(\Omega_{\P^1},\det(g_*\sO_C)),$$
which in turn arises as the composition
$$\Ext^1_C(L_{C/\P^{g-1}},\sO_C)
\to
\Hom_C(g^*(I_{\P^1}/I_{\P^1}^2),\sO_C)
\to
\Ext^1_C(g^*\Omega_{\P^1},\sO_C)
\to
\Ext^1_{\P^1}(\Omega_{\P^1},g_*\sO_C/\sO_{\P^1})$$
of natural maps.

Apart from the last map, this comes from the composition
$$g^*L_{\P^1/k}\to g^*L_{\P^1/\P^{g-1}}\to L_{C/\P^{g-1}}$$
of natural maps of cotangent complexes, which can also be factored as
$$g^*L_{\P^1/k}\to L_{C/k}\to L_{C/\P^{g-1}},$$
allowing us to describe the map as
$$\Ext^1_C(L_{C/\P^{g-1}},\sO_C)
\to
\Ext^1_C(L_{C/k},\sO_C)
\to
\Ext^1_C(g^*\Omega_{\P^1},\sO_C)
\to
\Ext^1_{\P^1}(\Omega_{\P^1},g_*\sO_C/\sO_{\P^1})$$

It follows immediately that we can rewrite the obstruction map in terms 
of deformations of $C$ as the composition:
\begin{equation}\label{hyperell-obstruction-map-composition}
\Ext^1_C(L_{C/k},\sO_C)
\to
\Ext^1_C(g^*\Omega_{\P^1},\sO_C)
\to
\Ext^1_{\P^1}(\Omega_{\P^1},g_*\sO_C/\sO_{\P^1})
\cong
\Hom_{\P^{g-1}}(I_{\P^1},f_*\sO_C/i_*\sO_{\P^1}).
\end{equation}

\begin{prop}\label{hyperell-obs-prop} 
Let $C$ be a smooth hyperelliptic curve of genus $g$. The composition \eqref{hyperell-obstruction-map-composition}
gives a surjective obstruction map
$$\obs:H^1(C,T_C)\simeq\Ext^1_C(L_{C/k},\OO_C)\to \Hom_{\P^{g-1}}(I_{\P^1}, i_*\det(g_*\OO_C))$$
such that for $n\ge 0$, an element $x\in H^0(\P^{g-1},I_{\P^1}(n))$ deforms along the deformation 
of the canonical map $C\to \P^{g-1}$ corresponding to a first order deformation 
$v\in \Ext^1_C(L_{C/k},\OO_C)$ if and only if $\obs(v)(x)=0$.
\end{prop}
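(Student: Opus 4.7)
The plan is to apply Proposition \ref{general-obstr-prop}(ii) with $S=\P^{g-1}$, $Y=\iota(\P^1)$, and $\LL=\OO_{\P^{g-1}}(n)$. Since the identification of its obstruction pairing with the composition \eqref{hyperell-obstruction-map-composition} starting from $H^1(T_C)=\Ext^1_C(L_{C/k},\OO_C)$ has already been carried out in the paragraphs preceding the statement, three things remain: verifying the cohomological hypothesis of \ref{general-obstr-prop}(ii), establishing surjectivity of \eqref{hyperell-obstruction-map-composition}, and ensuring that every $v\in H^1(T_C)$ lifts to a deformation of the canonical map $f$ so that $\obs(v)(x)$ makes sense independently of the chosen lift.

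The hypothesis $H^1(\P^{g-1},I_{\P^1}(n))=0$ for $n\ge 0$ follows from the ideal sequence together with $H^1(\OO_{\P^{g-1}}(n))=0$, reducing it to surjectivity of $H^0(\OO_{\P^{g-1}}(n))\to H^0(\OO_{\P^1}(n(g-1)))$, which is the classical projective normality of the rational normal curve. For surjectivity of \eqref{hyperell-obstruction-map-composition}, the plan is to analyze each arrow separately. Since $\cha k\ne 2$, the map $g^*\Om_{\P^1}\to\Om_C$ is injective with cokernel $\OO_R$ (the structure sheaf of the ramification divisor); applying $\Hom_C(-,\OO_C)$ and using $\Ext^2_C(\OO_R,\OO_C)=0$ makes $H^1(T_C)\to\Ext^1_C(g^*\Om_{\P^1},\OO_C)$ surjective. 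For the second arrow, the trace map (available in characteristic $\ne 2$) gives a canonical splitting $g_*\OO_C\simeq\OO_{\P^1}\oplus\det(g_*\OO_C)$, so the projection $g_*\OO_C\to g_*\OO_C/\OO_{\P^1}$ is split surjective; combined with the adjunction $\Ext^1_C(g^*\Om_{\P^1},\OO_C)\simeq\Ext^1_{\P^1}(\Om_{\P^1},g_*\OO_C)$, this yields the required split surjection on $\Ext^1$.

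To lift $v\in H^1(T_C)$ to a class $[\wt{f}]\in\Ext^1_C(L_{C/\P^{g-1}},\OO_C)$, the obstruction lies in $H^1(f^*T_{\P^{g-1}})$ by the cotangent triangle $f^*\Om_{\P^{g-1}}\to\Om_C\to L_{C/\P^{g-1}}$. Pulling back the Euler sequence gives
$$0\to\OO_C\to V\ot\om_C\to f^*T_{\P^{g-1}}\to 0$$
with $V=H^0(\om_C)^*$; in the long exact sequence the coboundary $H^1(\OO_C)\to V\ot H^1(\om_C)$ is Serre dual to the canonical isomorphism $V^*\to H^0(\om_C)$, hence is itself an isomorphism, which together with $H^2(\OO_C)=0$ yields $H^1(f^*T_{\P^{g-1}})=0$. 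Once such a lift $[\wt{f}]$ exists, the value $\obs(v)(x):=\obs([\wt{f}])(x)$ is independent of the lift because the composition \eqref{hyperell-obstruction-map-composition} factors through $H^1(T_C)$.

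The one point I expect to require care is matching the pairing \eqref{line-bun-obstr-pairing} for $\LL=\OO(n)$ with the cohomology class computed via \eqref{hyperell-obstruction-map-composition} after twisting by $\LL$; however this is formal, since the sheaf-level identification is already in place and the intermediate functors all commute with twisting by a line bundle pulled back from $\P^{g-1}$. The deformability criterion is then immediate: $x$ extends along the deformation of $f$ lying over $v$ if and only if $\obs([\wt{f}])(x)=0$ in $H^0(\P^{g-1},i_*\det(g_*\OO_C)\ot\OO(n))$ by Proposition \ref{general-obstr-prop}(ii), and the latter condition is $\obs(v)(x)=0$.
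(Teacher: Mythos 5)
Your proof follows the same backbone as the paper's --- both rest on Proposition \ref{general-obstr-prop} together with the surjectivity of $\Ext^1_C(L_{C/\P^{g-1}},\sO_C)\to\Ext^1_C(L_{C/k},\sO_C)$ --- but you establish surjectivity of $\obs$ by a genuinely different route. The paper argues on the ``first description'' side, i.e.\ the composition \eqref{general-obstruction-map-eq} out of $\Ext^1_C(L_{C/\P^{g-1}},\sO_C)$: its first arrow is onto because $\Ext^2_C(L_{C/\P^1},\sO_C)=0$ (as $L_{C/\P^1}$ is a torsion sheaf on a curve), and its second because the relevant $\Ext^1$ over $\P^{g-1}$ vanishes. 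You instead argue on the intrinsic side \eqref{hyperell-obstruction-map-composition}: surjectivity of $H^1(T_C)\to\Ext^1_C(g^*\Om_{\P^1},\sO_C)$ from the ramification sequence (this is the same vanishing $\Ext^2_C(\Om_{C/\P^1},\sO_C)=0$ the paper uses, transported across $L_{C/\P^1}\simeq\Om_{C/\P^1}$), and surjectivity of the second arrow from adjunction plus a splitting of $g_*\sO_C\to g_*\sO_C/\sO_{\P^1}$. You also supply two verifications that the paper leaves implicit and that are genuinely needed: the hypothesis $H^1(\P^{g-1},I_{\P^1}(n))=0$ of Proposition \ref{general-obstr-prop}(ii) (via projective normality of the rational normal curve), and the surjectivity of $\Ext^1_C(L_{C/\P^{g-1}},\sO_C)\to\Ext^1_C(L_{C/k},\sO_C)$ via $H^1(C,f^*T_{\P^{g-1}})=0$ from the Euler sequence. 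Those are worthwhile additions.

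The one genuine defect is that you invoke $\cha\,k\neq 2$ twice, whereas the proposition --- and Theorems \ref{quadric-hyperell-thm} and \ref{rel-hyperell-thm}, which depend on it --- carries no characteristic hypothesis, and the paper's own proof is characteristic-free. Your first invocation is harmless: for a smooth hyperelliptic curve of genus $\ge 2$ the double cover $g$ is separable in every characteristic, so $g^*\Om_{\P^1}\to\Om_C$ is injective with torsion cokernel $\Om_{C/\P^1}$; in characteristic $2$ this cokernel is not $\OO_R$ for the reduced ramification divisor, but torsion-ness is all your $\Ext^2$-vanishing uses. The second invocation is a real failure as written: in characteristic $2$ the trace map vanishes on $\OO_{\P^1}\subset g_*\sO_C$ and gives no splitting. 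The repair is easy and characteristic-free: the quotient $g_*\sO_C/\sO_{\P^1}$ is the line bundle $\OO_{\P^1}(-g-1)$ in every characteristic (the unit $\OO_{\P^1}\to g_*\sO_C$ is locally split since $g$ is finite flat), so the sequence splits anyway because $\Ext^1_{\P^1}(\OO_{\P^1}(-g-1),\OO_{\P^1})=H^1(\P^1,\OO(g+1))=0$; alternatively, the obstruction to surjectivity of $\Ext^1_{\P^1}(\Om_{\P^1},g_*\sO_C)\to\Ext^1_{\P^1}(\Om_{\P^1},g_*\sO_C/\sO_{\P^1})$ lies in $\Ext^2_{\P^1}(\Om_{\P^1},\sO_{\P^1})=H^2(\P^1,T_{\P^1})=0$. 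With either substitution your argument proves the proposition in all characteristics.
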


\Pf . The fact that the obstruction is given in this way follows from Proposition \ref{general-obstr-prop} and
from the surjectivity of the map $\Ext^1_C(L_{C/\P^{g-1}},\sO_C)\to\Ext^1(C,L_{C/k},\OO_C)$.

The surjectivity of the obstruction map follows from the first description of the obstruction: the morphism
$$\Hom_{\P^{g-1}}(I_{\P^1},f_*\sO_C)
\to
\Hom_{\P^{g-1}}(I_{\P^1},f_*\sO_C/i_*\sO_{\P^1})$$
is surjective since
$\Ext^1_{\P^{g-1}}(I_{\P^1},i_*\sO_{\P^1})=0$,
and the morphism
$$\Ext^1_C(L_{C/\P^{g-1}},\sO_C)
\to
\Hom_C(g^*(I_{\P^1}/I_{\P^1}^2),\sO_C)$$
is surjective since $L_{C/\P^1}$ is a sheaf, so
$\Ext^2_C(L_{C/\P^1},\sO_C)=0$.
\ed

From Lemma \ref{normal-bundle-lem} we get an isomorphism 
$$\Hom_{\P^{g-1}}(I_{\P^1}, i_*\det(g_*\OO_C))\simeq\Hom_{\P^1}(N^\vee,\OO(-g-1))\simeq H^0(\P^1,\OO(g-3))^\vee,$$
so the obstruction space has dimension $g-2$, which is the same as the codimension of the hyperelliptic locus.
Note that $\obs$ vanishes on the tangent space to the hyperelliptic locus, so by the above dimension
count, we get the following Corollary.

\begin{cor}\label{ker-obs-cor} 
The kernel of $\obs$ is equal to the tangent space to the hyperelliptic locus, and the map $\obs$ induces an isomorphism
\begin{equation}\label{obs-normal-space-eq}
\obs:N_C\rTo{\sim} H^0(\P^1,\OO(g-3))^\vee,
\end{equation}
where $N_C$ is the normal space to the hyperelliptic locus in $\MM_g$ at $C$. 
\end{cor}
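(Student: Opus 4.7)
The plan is to reduce the corollary to a dimension count: we already know from Proposition \ref{hyperell-obs-prop} that $\obs$ is surjective, so it suffices to show that the kernel of $\obs$ contains the tangent space $T_{\HH_g,[C]}$ to the hyperelliptic locus and that this containment is an equality for dimensional reasons.

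First I would pin down the dimensions. On the target side, Lemma \ref{normal-bundle-lem} (with $n=g-1$) and the standard identification $g_*\OO_C\simeq \OO_{\P^1}\oplus\OO_{\P^1}(-g-1)$ for a hyperelliptic double cover give
$$\Hom_{\P^{g-1}}(I_{\P^1}, i_*\det(g_*\OO_C))\simeq \Hom_{\P^1}(N^\vee,\OO_{\P^1}(-g-1))\simeq H^0(\P^1,\OO(g-3))^\vee,$$
a space of dimension $g-2$. On the source side, $H^1(C,T_C)$ has dimension $3g-3 = \dim T_{[C]}\MM_g$, while $\HH_g$ has dimension $2g-1$, hence codimension $g-2$ in $\MM_g$; so the target dimension matches the codimension of $\HH_g$ exactly.

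Next, I would verify $T_{\HH_g,[C]}\subseteq \ker(\obs)$. By Proposition \ref{hyperell-obs-prop}, an element $x\in H^0(\P^{g-1},I_{\P^1}(n))$ lies in $\ker(\obs(v))$ precisely when $x$ extends along the first-order deformation of the canonical map determined by $v$. If $v$ is tangent to $\HH_g$, then the deformed curve is still hyperelliptic, its canonical map still factors through a double cover of a rational normal curve in $\P^{g-1}$, and this rational normal curve is a first-order deformation of $i(\P^1)$. Consequently every section of $I_{\P^1}(n)$ lifts to a section vanishing on the deformed rational normal curve (and hence on the deformed canonical image), so $\obs(v)$ annihilates all such sections. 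Because $I_{\P^1}$ is globally generated in degree $2$ by Lemma \ref{quadratic-map-lem}(i), this suffices to conclude $\obs(v) = 0$ as a homomorphism $I_{\P^1}\to i_*\det(g_*\OO_C)$.

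Finally, putting everything together: surjectivity of $\obs$ gives $\dim\ker(\obs)=(3g-3)-(g-2)=2g-1$, which agrees with $\dim T_{\HH_g,[C]}$, so the inclusion of Step~3 is an equality, $\ker(\obs)=T_{\HH_g,[C]}$. The induced map on the quotient $N_C = T_{[C]}\MM_g/T_{\HH_g,[C]}$ is then an injection between two spaces of dimension $g-2$, hence the desired isomorphism \eqref{obs-normal-space-eq}. The main point requiring care is Step~3, namely making precise the geometric statement that a first-order hyperelliptic deformation induces a first-order deformation of the factorization $C\to \P^1\hookrightarrow\P^{g-1}$ through a rational normal curve; everything else is bookkeeping with the already-established surjectivity and dimension identities.
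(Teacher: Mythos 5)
Your proof is correct and follows essentially the same route as the paper: surjectivity of $\obs$ from Proposition \ref{hyperell-obs-prop}, the identification of the obstruction space with $H^0(\P^1,\OO(g-3))^\vee$ of dimension $g-2$ equal to the codimension of the hyperelliptic locus, and the vanishing of $\obs$ on the hyperelliptic tangent space, concluded by a dimension count. The only difference is that you spell out the justification of the vanishing step (hyperelliptic first-order deformations keep the canonical image a rational normal curve, so all equations lift), which the paper simply asserts with ``Note that $\obs$ vanishes on the tangent space to the hyperelliptic locus.''
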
 

Let us now consider the special case of elements in $H^0(I_{\P^1}(2))$.
Recall that we have an identification 
\begin{equation}\label{quadrics-identification-eq}
S_2H^0(\P^1,\OO(g-3))\rTo{\sim} H^0(I_{\P^1}(2)):q\mapsto x_q
\end{equation}
(see Lemma \ref{quadratic-map-lem}(i)).

\begin{cor}\label{1st-order-obstruction-cor}
Let $C$ be a smooth hyperelliptic curve of genus $g$, and let $x_q\in H^0(I_{\P^1}(2))$ be a quadratic
relation corresponding to $q\in S_2H^0(\P^1,\OO(g-3))$. Then the relation $x_q$ deforms to the first order
in a normal direction $v\in N_C$ if and only if $\obs(v)$ is in the kernel of the bilinear form on $H^0(\P^1,\OO(g-3))^\vee$
determined by $q$.
\end{cor}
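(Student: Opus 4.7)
The plan is to assemble the machinery already built: Proposition \ref{hyperell-obs-prop} reduces deformability of $x_q$ in the direction $v\in N_C$ to the vanishing of $\obs(v)(x_q)$, and the remaining task is to identify this pairing explicitly with the bilinear form $q$.

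First I would unpack the target of $\obs$. Combining Lemma \ref{normal-bundle-lem} with $\det(g_*\OO_C)\simeq\OO_{\P^1}(-g-1)$ in the hyperelliptic case, the identification
\[
\Hom_{\P^{g-1}}(I_{\P^1},i_*\det(g_*\OO_C))\simeq H^0(\P^1,\OO(g-3))^\vee
\]
from Corollary \ref{ker-obs-cor} lets me view $\obs(v)$ as a linear functional $\la\in H^0(\P^1,\OO(g-3))^\vee$. The obstruction map \eqref{general-obstruction-map-eq} factors through $\Hom_C(g^*(I_{\P^1}/I_{\P^1}^2),\OO_C)$, so the pairing $\obs(v)(x_q)$ depends only on the image of $x_q$ in $H^0(\P^{g-1},I_{\P^1}/I_{\P^1}^2(2))$.

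Second I would apply Lemma \ref{quadratic-map-lem}(ii), which identifies the projection
\[
\phi_2: H^0(\P^{g-1},I_{\P^1}(2))\to H^0(\P^{g-1},I_{\P^1}/I_{\P^1}^2(2))\simeq H^0(\P^1,\OO(g-3))^{\ot 2}
\]
with the natural inclusion $S_2H^0(\P^1,\OO(g-3))\hra H^0(\P^1,\OO(g-3))^{\ot 2}$ under \eqref{quadrics-identification-eq}. Thus $\phi_2(x_q)$ is just $q$ regarded as a symmetric tensor. Then pairing with $\la$ on the first factor produces exactly the element
\[
\iota_\la(q)=(\la\ot\id)(q)\in H^0(\P^1,\OO(g-3)),
\]
so the obstruction $\obs(v)(x_q)$ is identified with $\iota_\la(q)$.

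Finally I would observe that $\iota_\la(q)=0$ is precisely the statement that $\la$ lies in the radical of the symmetric bilinear form on $H^0(\P^1,\OO(g-3))^\vee$ determined by $q$: indeed, for any $\mu\in H^0(\P^1,\OO(g-3))^\vee$, one has $\lan\mu,\iota_\la(q)\ran = q(\la,\mu)$, so the vanishing of $\iota_\la(q)$ is equivalent to $q(\la,\cdot)=0$. Combining with Proposition \ref{hyperell-obs-prop} yields the corollary. The only subtlety, and the one place where I would be careful, is checking that the identification of $\obs(v)$ with an element of $H^0(\P^1,\OO(g-3))^\vee$ coming from Corollary \ref{ker-obs-cor} matches the $\la$ appearing in $\iota_\la$ in Proposition \ref{deg-qu-prop}; but this is exactly the compatibility encoded in Lemmas \ref{normal-bundle-lem} and \ref{quadratic-map-lem}(ii), so no new computation is required.
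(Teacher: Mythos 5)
Your proposal is correct and follows essentially the same route as the paper: the paper's own proof reduces, via Proposition \ref{hyperell-obs-prop} and Corollary \ref{ker-obs-cor}, to checking that the pairing of $x_q$ with $y\in \Hom_{\P^{g-1}}(I_{\P^1},i_*\det(g_*\sO_C))\simeq H^0(\P^1,\OO(g-3))^\vee$ is contraction of the bilinear form $q$ with $y$, citing Lemma \ref{quadratic-map-lem}(ii) exactly as you do. Your write-up merely makes explicit the intermediate steps (factoring through the conormal bundle, identifying $\phi_2(x_q)$ with the symmetric tensor $q$, and noting that $\iota_\la(q)=0$ means $\la$ lies in the radical of $q$) that the paper leaves implicit.
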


\Pf . It is enough to check that the natural pairing of $x_q\in
H^0(I_{\P^1}(2))$ with $y\in
\Hom_{\P^{g-1}}(I_{\P^1},i_*\det(g_*\sO_C))\simeq H^0(\P^1,\OO(g-3))^\vee$
is given by contracting the bilinear form given by $q$ with $y$. But this
follows from Lemma \ref{quadratic-map-lem}(ii).  \ed


\subsection{Hyperelliptic limits of canonical quadrics and ribbons}

For every curve $C$ of genus $g$ we consider the morphism
$$\mu_C:S^2H^0(C,\om_C)\to H^0(C,\om_C^{\ot 2}).$$

\begin{definition}
Let $C_0$ be a hyperelliptic curve of genus $g$ over $k$. We say that a quadric
$x_0\in \ker(\mu_{C_0})$ is a {\it limit of canonical quadrics} if there exists a dvr $R$ with the residue field $k$
and the fraction field $K$, and a family of curves $C$ over $R$, and a quadric $x\in\ker(\mu_C)$
such that $(C_k,x_k)\simeq (C_0,x_0)$ and $C_K$ is non-hyperelliptic.
\end{definition}

The canonical map $C_0\to \P^{g-1}$ of a hyperelliptic curve factors through
the double covering $C_0\to \P^1$. Thus, we have 
an identification
$$\ker(\mu_{C_0})\simeq H^0(\P^{g-1},I_{\P^1}(2)),$$
and we can use the map $q\mapsto x_q$ (see \eqref{quadrics-identification-eq})
that identifies the latter space with $S_2H^0(\P^1,\OO(g-3))$.

\begin{thm}\label{quadric-hyperell-thm} 
Let $g\ge 3$. A quadric $x_q\in \ker(\mu_{C_0})$ is a limit of canonical quadrics if and only if the corresponding
element $q\in S_2H^0(\P^1,\OO(g-3))$ is degenerate.
\end{thm}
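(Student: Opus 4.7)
The plan is to handle the two implications using Proposition~\ref{deg-qu-prop} (the intrinsic description of quadrics through $\RR_\la$) together with the first-order obstruction analysis of Corollary~\ref{1st-order-obstruction-cor}, reducing all higher-order considerations to a single dimension count.

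For the forward direction, I start with a family $p:C\to\Spec R$ realizing $x_q$ as a limit, with $C_K$ non-hyperelliptic. By Lemma~\ref{limit-lem} the flat limit $\ov{C}$ of the canonical images in the Hilbert scheme is a canonical ribbon $\RR_{\la'}$ supported on $f(C_0)\subset\P(\VV_k)$. The extending quadric $x\in\ker(\mu_C)$ corresponds to a section of $I_{\ov{C}}(2)$, and therefore its specialization $x_q$ vanishes on $\RR_{\la'}$; by Proposition~\ref{deg-qu-prop}(i) this forces $\iota_{\la'}(q)=0$, so $q$ is degenerate.

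For the converse, given degenerate $q$ I pick $\la\neq 0$ with $\iota_\la(q)=0$, and use the isomorphism $\obs$ of Corollary~\ref{ker-obs-cor} to lift $\la$ to a normal direction $v\in N_{C_0}$. I then choose a smooth formal curve in $\MM_g$ through $[C_0]$ whose tangent maps to $v$, producing a family $C/R$ over the DVR $R=k[[t]]$. Since $v$ is transverse to $\Hyp_g$, the generic fiber $C_K$ is non-hyperelliptic, and Corollary~\ref{1st-order-obstruction-cor} already guarantees that $x_q$ extends to first order along $v$.

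The step requiring the most care, and the main (modest) obstacle, is to upgrade this first-order extension to an extension over all of $R$. Rather than run an inductive obstruction computation, I plan to argue via a dimension count. The module $\ker(\mu_{C/R})$ is a submodule of the free $R$-module $S^2 p_*\om_{C/R}$, hence is torsion-free and so free over the DVR $R$; its generic rank equals $\dim_K\ker(\mu_{C_K})=\binom{g-2}{2}$ by Noether's theorem applied to the smooth non-hyperelliptic curve $C_K$. Consequently $\ker(\mu_{C/R})\otimes_R k$ is a $\binom{g-2}{2}$-dimensional subspace of $\ker(\mu_{C_0})$, consisting of exactly those $x_{q'}$ that extend to $C/R$. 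This subspace is a priori contained in the first-order extension locus, which by Corollary~\ref{1st-order-obstruction-cor} equals $\ker(\iota_\la)\subset S_2 H^0(\P^1,\OO(g-3))$. Since $\iota_\la$ is surjective onto the $(g-2)$-dimensional space $H^0(\P^1,\OO(g-3))$ whenever $\la\neq 0$, its kernel has dimension $\binom{g-1}{2}-(g-2)=\binom{g-2}{2}$, forcing equality of the two subspaces. Hence $q\in\ker(\iota_\la)$ lifts to an element of $\ker(\mu_{C/R})$, and passing to the generic fiber yields the desired element of $\ker(\mu_{C_K})$ specializing to $x_q$.
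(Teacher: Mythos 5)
Your proof is correct, but it takes a genuinely different route from the paper's. The paper proves this theorem by a global argument on the stack $X$ of pairs $(C,x)$ with $x\in\ker(\mu_C)$: it shows that $X$ is a reduced local complete intersection with exactly two irreducible components of the same dimension (the closure of the non-hyperelliptic locus, and a vector bundle over the hyperelliptic locus), uses Corollary \ref{1st-order-obstruction-cor} to see that $X$ is smooth at $(C_0,x_q)$ exactly when $q$ is nondegenerate, and concludes that $x_q$ is a limit of canonical quadrics if and only if $(C_0,x_q)$ is a singular point of $X$; ribbons and Lemma \ref{limit-lem} play no role in the paper's proof of this particular statement. You instead prove the forward implication by specializing to the ribbon limit (Lemma \ref{limit-lem} together with Proposition \ref{deg-qu-prop}), and the converse by constructing a single transverse family over $k[[t]]$ whose normal vector is $\obs^{-1}(\la)$ and then pinning down the special fiber of the free $R$-module $\ker(\mu_{C/R})$ by a dimension count against the first-order extension locus $\ker(\iota_\la)\cap S_2H^0(\P^1,\OO(g-3))$. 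This is close in spirit to the paper's own proof of the higher-degree Theorem \ref{rel-hyperell-thm}, with two differences: there the transverse family is bootstrapped from the quadric case, whereas you produce yours directly from smoothness of $\MM_g$, and the paper bounds the rank of the lifted kernel via ribbon cohomology (Lemma \ref{ribbon-coh-lem}), whereas you bound it via the obstruction pairing. Your route is more economical (no l.c.i./reducedness analysis of the stack of pairs) and makes the $d=2$ case uniform with general $d$; the paper's route yields extra geometric information, namely that the stack of pairs is reduced and that the deformable locus is the intersection of its two components. Two steps you should spell out: the inclusion $\ker(\mu_{C/R})\otimes_R k\subset\ker(\mu_{C_0})$ requires noting that $\im(\mu_{C/R})$ is torsion-free, hence free over the dvr, so the kernel is a direct summand of $S^2p_*\om_{C/R}$; and in the forward direction, the specialization of $x$ lies in $H^0(\II_{\ov{C}}(2))$ because the scheme-theoretic closure of the generic canonical image is $R$-flat, hence torsion-free, so a section of $\OO(2)$ vanishing on the generic fiber vanishes on the whole closure.
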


\Pf .  The proof is based on the use of the first order obstruction defined
in Section \ref{1st-order-obstruction-sec}.  Namely, recall that we have an
identification $\obs$ of the normal space $N_{C_0}$ to the hyperelliptic
locus at $C_0$ with $H^0(\P^1,\OO(g-3))^\vee$ (see
\eqref{obs-normal-space-eq}), such that $x_q$ has a first order deformation
to a canonical quadric in the direction of $v\in T_{[C_0]}\MM_g$ if and
only if $\obs(v)$ is in the kernel of $q$ (see Corollary
\ref{1st-order-obstruction-cor}).

Let $X$ denote the stack of pairs $(C,x)$ where $C$ is a curve and $x\in\ker(\mu_C)$.
We can view $X$ as a closed substack in the total space $\tot(W^0)$
of the bundle $W^0=S^2\pi_*\om_C$ over $\MM_g$, given as the zero locus of the section $s=\mu_C$ of $p^*W^1$,
where $W^1=\pi_*(\om_C^{\ot 2})$ and
$p:\tot(W^0)\to \MM_g$ is the projection.

We claim that in fact $s$ is a regular section, and so $X$ is a local complete intersection.

Indeed, this follows from the fact that $X$ has two irreducible components
of the same dimension (equal to the expected dimension of $s=0$)
$$N=3g-3+\dim\ker(\mu_C)=2g-1+\dim\ker(\mu_{C_0})$$ 
where $C$ is non-hyperelliptic and $C_0$ is hyperelliptic. The first component is the closure
of the locus where $C$ is non-hyperelliptic, while the second component has $C_0$ hyperelliptic and $x\in\ker(\mu_{C_0})$
arbitrary. The equality of dimensions follows from the fact that $\mu_C$ is surjective for non-hyperelliptic $C$, while
$\dim\coker(\mu_{C_0})=g-2$ for hyperelliptic curve $C_0$.

Next, we claim that each component of $X$ is generically reduced. Indeed, this is clear on the non-hyperelliptic component.
Now let $C_0$ be a hyperelliptic curve, and let $x=x_q$ be the canonical quadric corresponding to a nondegenerate
$q\in S_2H^0(\P^1,\OO(g-3))$.
We claim that $X$ is smooth at $(C_0,x_q)$.

Indeed, the tangent space to $(C_0,x_q)$ consists of pairs $(v,\wt{x})$,
where $v\in T_{[C_0]}\MM_g$ and $\wt{x}$ is a first order deformation of
$x_q$ along $v$.
Note that for given $v$ the set of liftings is a torsor for $\ker(\mu_{C_0})$. 
Thus, the dimension of the tangent space to $X$ at $(C_0,x)$ is equal to $\dim\ker(\mu_{C_0})+d$, where
$d$ is the dimension of the space of $v$ such that $x_q$ deforms
along $v$. Since $q$ is nondegenerate, $x_q$ deforms along $v$ if and only if $\obs(v)=0$, i.e.,
$v$ is tangent to the hyperelliptic locus. Hence, $d=2g-1$, and
we deduce that $X$ is smooth at $(C_0,x_q)$.

Since $X$ is l.c.i, so has no embedded components, we obtain that $X$ is
reduced.  Furthermore, the hyperelliptic component of $X$ is smooth, since
it is a vector bundle over the hyperelliptic locus.  Hence, $x_q$ is a
limit of canonical quadrics if and only if $X$ is singular at $(C_0,x_q)$.

As we have seen above, if $q$ is nondegenerate then $X$ is smooth at $(C_0,x_q)$, so $x_q$ is not a limit
of canonical quadrics. On the other hand, if $q$ is degenerate, then there exists a normal direction to the hyperelliptic
locus $v$, such that $\obs(v)$ is in the kernel of $q$. Hence, the dimension of the space of tangent vectors $v$ in
$T_{[C_0]}\MM_g$ 
such that $x_q$ deforms along $v$, is $>2g-1$. This implies that
the dimension of the tangent space $T_{(C_0,x_q)}X$ is $>N$,
so $X$ is singular at $(C_0,x_q)$. Hence, $x_q$ is a limit of canonical quadrics.
\ed

\subsection{Hyperelliptic limits of higher degree relations}

Now we want to consider hyperelliptic limits of relations in
$$\ker(\mu^d_C:S^dH^0(C,\omega_{C})\to H^0(C,\omega_{C}^{\ot d})).$$
As in the quadratic case, we say that for a hyperelliptic curve $C_0$ an element $f_0\in \ker(\mu^d_{C_0})$
is a {\it limit of canonical relations} if it is obtained by a specialization from an element of $\ker(\mu^d_C)$
for a family $C/R$ over a dvr $R$ such that $C_k\simeq C_0$ and $C_K$ is non-hyperelliptic.

Let $C_0$ be a hyperelliptic curve of genus $g$. Then we have an identification
$$\ker(\mu^d_{C_0})\simeq H^0(\P^{g-1},\II_{C_r}(d)),$$
where $C_r\sub\P^{g-1}$ is the rational normal curve obtained as the image of the canonical morphism of $C_0$.

Let us consider the natural projection
\begin{equation}\label{phi-d-eq}
\phi_d:H^0(\P^{g-1},\II_{C_r}(d))\to H^0(\P^{g-1},\II_{C_r}/\II_{C_r}^2(d))\simeq 
H^0(\P^1,\OO(g-3))\ot H^0(\P^1,\OO((d-1)(g-1)-2)).
\end{equation}
We can view the elements of the target space as $(g-2)\times
((d-1)(g-1)-1)$-matrices, and in particular talk about their rank.


\begin{lem}\label{ribbon-sur-lem} Let $C_r\sub \P^n$ be a rational normal curve. The map 
$$\phi_d:H^0(\P^n,\II_{C_r}(d))\to H^0(\P^n,\II_{C_r}/\II_{C_r}^2(d))$$
is surjective for $d\ge 3$.
\end{lem}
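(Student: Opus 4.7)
The plan is to translate the surjectivity of $\phi_d$ into a multiplication-surjectivity statement on $\P^1$. The key fact I would use is that the rational normal curve $C_r$ is projectively normal and that its saturated ideal $\II_{C_r}$ is generated in degree $2$ by the quadrics $Q(f)$ from Lemma \ref{quadratic-map-lem}(i) (i.e., the $2\times 2$ minors of the catalecticant matrix). Consequently the multiplication map
$$H^0(\P^n, \II_{C_r}(2)) \otimes H^0(\P^n, \OO(d-2)) \to H^0(\P^n, \II_{C_r}(d))$$
is surjective for every $d \geq 2$.

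Using that the projection to $\II_{C_r}/\II_{C_r}^2$ respects the natural $\OO_{C_r}$-module structure, so $\phi_d(hm) = \phi_2(h) \cdot (m|_{C_r})$ for $h \in H^0(\II_{C_r}(2))$ and $m \in H^0(\OO(d-2))$, combined with Lemma \ref{quadratic-map-lem}(ii), which identifies the image of $\phi_2$ with $S_2 V \subset V \otimes V$ for $V := H^0(\P^1, \OO(n-2))$, the image of $\phi_d$ coincides with the image of the map
$$\mu_d: S_2 V \otimes H^0(\P^1, \OO((d-2)n)) \to V \otimes H^0(\P^1, \OO((d-1)n-2))$$
obtained by composing the inclusion $S_2 V \hookrightarrow V\otimes V$ with multiplication of the second factor (using the identifications of Lemma \ref{normal-bundle-lem}). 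Surjectivity of $\phi_d$ is therefore equivalent to surjectivity of $\mu_d$.

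The remaining step is to prove $\mu_d$ surjective for $d \geq 3$, which I would verify by an explicit monomial chase. Take the basis $e_i = x_0^i x_1^{n-2-i}$ ($0 \leq i \leq n-2$) of $V$ and the basis $p_k = x_0^k x_1^{(d-1)n-2-k}$ ($0 \leq k \leq (d-1)n-2$) of the target's second factor. The diagonal contributions $\mu_d(e_i \otimes e_i \otimes m) = e_i \otimes e_i m$, as $m$ ranges over monomials in $H^0(\P^1, \OO((d-2)n))$, already cover every tensor $e_i \otimes p_k$ with $p_k$ of the form $e_i m$. The remaining boundary tensors are recovered from the off-diagonal images
$$\mu_d\bigl((e_i \otimes e_j + e_j \otimes e_i) \otimes m\bigr) = e_i \otimes e_j m + e_j \otimes e_i m :$$
choosing $j$ and $m$ so that the unwanted summand $e_j \otimes e_i m$ already lies in the image by the diagonal argument applied to $e_j$, one extracts $e_i \otimes e_j m$ and thereby covers the missing monomial tensors. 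The hypothesis $d \geq 3$, equivalent to $(d-2)n \geq n$, ensures enough multiplicative room in $H^0(\P^1, \OO((d-2)n))$ for this cancellation to reach every monomial. The main obstacle is precisely the combinatorial bookkeeping in this last step; everything else reduces formally from Lemma \ref{quadratic-map-lem} and the generation of $\II_{C_r}$ in degree $2$.
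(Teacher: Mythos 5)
Your proposal is correct, and its skeleton matches the paper's: both reduce, via Lemma \ref{quadratic-map-lem} and the module compatibility $\phi_d(hm)=\phi_2(h)\cdot(m|_{C_r})$, to the surjectivity of the multiplication map $S_2V\otimes H^0(\P^1,\OO((d-2)n))\to V\otimes H^0(\P^1,\OO((d-1)n-2))$ with $V=H^0(\P^1,\OO(n-2))$. Two differences are worth noting. First, your appeal to the fact that $\II_{C_r}$ is generated in degree $2$ is both unproven in the paper and unnecessary: for surjectivity of $\phi_d$ you only need the inclusion $\im(\mu_d)\subseteq\im(\phi_d)$, which follows from the factorization of $\mu_d$ through $\phi_d$ alone; this one-sided reduction is exactly what the paper uses, so you can drop that input entirely. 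Second, the endgame differs. The paper proves a stronger statement: the map $\a_e:H^0(\P^1,\OO(e))\otimes S_2H^0(\P^1,\OO(m))\to H^0(\P^1,\OO(m+e))\otimes H^0(\P^1,\OO(m))$ is surjective for \emph{all} $e\ge 1$, $m\ge 0$, by viewing it as a map of graded $S$-modules $S\ot S_2(S_m)\to S(m)\ot S_m$, observing that $S(m)_{\ge1}$ is generated in degree $1$ so that only $\a_1$ needs checking, and then running an induction on $m$ with polarization identities. You instead work directly at $e=(d-2)n$ and exploit the room given by $d\ge3$ in a monomial chase. Your chase does close up --- for a missing tensor $e_i\otimes p_k$ with $k<i$ take $j=k$ and $m=x_1^{(d-2)n}$, and with $k>i+(d-2)n$ take $j=k-(d-2)n$ and $m=x_0^{(d-2)n}$; in both cases the unwanted summand $e_j\otimes e_im$ is diagonal-covered precisely because $(d-2)n\ge n>n-2$ --- so the bookkeeping you deferred is routine. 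The trade-off: the paper's argument is more structural and yields surjectivity already for $e=1$ (hence uniformly in $d$), while yours is more elementary and self-contained but genuinely needs the largeness of $(d-2)n$.
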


\Pf . Is enough to prove the surjectivity of the composed map
\begin{align*}
&H^0(\P^n,\OO(d-2))\ot H^0(\P^n,\II_{C_r}(2))\to 
H^0(\P^n,\OO(d-2))\ot H^0(\P^n,\II_{C_r}/\II_{C_r}^2(2))\to\\
&H^0(\P^n,\II_{C_r}/\II_{C_r}^2(d)),
\end{align*}

By Lemma \ref{quadratic-map-lem}, this reduces to the surjectivity of the composition
\begin{align*}
&H^0(\P^1,\OO_{\P^1}(n(d-2)))\ot S_2H^0(\P^1,\OO(n-2))\to 
H^0(\P^1,\OO_{\P^1}(n(d-2)))\ot H^0(\P^1,\OO(n-2))^{\ot 2}\to\\ 
&H^0(\P^1,\OO(nd-n-2))\ot H^0(\P^1,\OO(n-2)).
\end{align*}

We claim that in fact the composed map
\begin{align*}
&\a_e:H^0(\P^1,\OO(e))\otimes S_2H^0(\P^1,\OO(m))\to H^0(\P^1,\OO(e))\otimes H^0(\P^1,\OO(m))^{\ot 2}\to \\
&H^0(\P^1,\OO(m+e))\otimes H^0(\P^1,\OO(m))
\end{align*}
is surjective for all $e\ge 1$, $m\ge 0$.
We can view it as a degree $e$ component of the map of graded $S$-modules, where $S=\bigoplus_e H^0(\P^1,\OO(e))$, 
$$S\ot S_2(S_m)\to S(m)\ot S_m.$$ Since the module $S(m)_{\ge 1}$ is
generated in degree $1$, it is enough to prove the surjectivity of the
degree $1$ component, $\a_1$.  If $1,x$ are a basis of
$S_1=H^0(\P^1,\OO(1))$, then
$$\a_1(x\ot f^{\ot 2})=xf\ot f, \ \ \a_1(1\ot f^{\ot 2})=f\ot f.$$ 
Now we can use the induction on $m\ge 0$. By the induction assumption, we can assume that all $x^i\ot x^j$ with $i\le m$,
$j\le m-1$ are in the image of $\a_1$. Since 
$$x^i\ot x^m+x^m\ot x^i=\a_1(1\ot (x^i\ot x^m+x^m\ot x^i)),$$ 
this implies that $x^i\ot x^m$ is still in the image of $\a_1$ for $i\le m-1$. Also, $x^m\ot x^m=\a_1(1\ot (x^m)^{\ot 2})$.
Since 
$$x^{m+1}\ot x^j+x^{j+1}\ot x^m=\a_1(x\ot (x^m\ot x^j+x^j\ot x^m)),$$
we deduce that $x^{m+1}\ot x^j$ is in the image of $\a_1$ for $j\le m$.
Finally, $x^{m+1}\ot x^m=\a_1(1\ot (x^m)^{\ot 2})$.
\ed

\begin{lem}\label{ribbon-coh-lem} 
Let $\RR$ be a canonical ribbon in $\P^n$. Then for $d\ge 2$, $H^1(\RR,\OO_\RR(d))=0$ and 
$h^0(\RR,\OO_\RR(d))=(2d-1)n$. Also, the natural map
$$H^0(\P^n,\OO(d))\to H^0(\RR,\OO(d))$$
is surjective.
\end{lem}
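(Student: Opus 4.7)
The starting point is the defining exact sequence \eqref{ribbon-str-sh-seq}, which upon twisting by $\OO_{\P^n}(d)$ becomes
$$0\to \om_{C_r}(d-1)\to \OO_\RR(d)\to \OO_{C_r}(d)\to 0.$$
Under the Veronese identification $C_r\simeq \P^1$, the outer sheaves are $\OO_{\P^1}(nd-n-2)$ and $\OO_{\P^1}(nd)$ respectively; for $d\ge 2$ and $n\ge 2$ both have nonnegative degree, so their $H^1$ vanishes. The long cohomology sequence then gives $H^1(\RR,\OO_\RR(d))=0$ and
$$h^0(\RR,\OO_\RR(d))=(nd-n-1)+(nd+1)=(2d-1)n,$$
proving the first two claims.

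For the surjectivity of $\rho:H^0(\P^n,\OO(d))\to H^0(\RR,\OO_\RR(d))$, compose $\rho$ with the restriction $r:H^0(\RR,\OO_\RR(d))\to H^0(C_r,\OO_{C_r}(d))$. The map $r$ is surjective by the first paragraph, and the composition $r\circ\rho$ is surjective by projective normality of the rational normal curve. A diagram chase then reduces surjectivity of $\rho$ to surjectivity of the induced map of kernels
$$\alpha: H^0(\P^n,\II_{C_r}(d))\to H^0(\om_{C_r}(d-1)).$$
By construction of $\RR=\RR_\la$, this map factors as
$$H^0(\P^n,\II_{C_r}(d))\xrightarrow{\phi_d} H^0(N^\vee(d))\xrightarrow{\la\ot\id} H^0(\om_{C_r}(d-1)),$$
where we use the isomorphism $N^\vee\simeq H^0(\P^1,\OO(n-2))\ot\OO_{\P^1}(-n-2)$ of Lemma \ref{normal-bundle-lem}.

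For $d\ge 3$, Lemma \ref{ribbon-sur-lem} gives surjectivity of $\phi_d$, and $\la\ot\id$ is surjective since $\la\neq 0$. For $d=2$, which is not covered by Lemma \ref{ribbon-sur-lem} and is the only real subtlety, Lemma \ref{quadratic-map-lem}(ii) identifies $\phi_2$ with the symmetric-tensor inclusion $S_2H^0(\P^1,\OO(n-2))\hookrightarrow H^0(\P^1,\OO(n-2))^{\ot 2}$; the composition with $\la\ot\id$ is then precisely the map $q\mapsto \iota_\la(q)$ of Proposition \ref{deg-qu-prop}, whose surjectivity is verified directly by evaluating on symmetric tensors of the form $v\ot w+w\ot v$ with $\la(v)=1$ and $w$ arbitrary.
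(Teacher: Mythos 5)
Your first paragraph and your reduction of surjectivity to the induced map on kernels coincide with the paper's own argument: the paper likewise twists \eqref{ribbon-str-sh-seq} to get the vanishing and the dimension count, and for $d\ge 3$ it runs exactly your diagram chase, quoting Lemma \ref{ribbon-sur-lem} for the surjectivity of $\phi_d$ (surjectivity of $\la\ot\id$ being clear for $\la\neq 0$). For $d=2$ the paper instead appeals to a dimension count based on its description of quadrics through $\RR$ (Proposition \ref{deg-qu-prop}); unwinding that count shows it rests on the same fact you isolate explicitly, namely surjectivity of $\iota_\la$ on symmetric tensors, so your write-up is a more explicit version of the same route rather than a genuinely different one.

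There is, however, a gap in your final step, and it matters because the lemma feeds into Theorem \ref{rel-hyperell-thm}, which carries no characteristic hypothesis. You verify surjectivity of $\iota_\la$ on $S_2H^0(\P^1,\OO(n-2))$ by evaluating only on tensors $v\ot w+w\ot v$ with $\la(v)=1$. These map to $w+\la(w)v$, i.e.\ you are looking at the image of the endomorphism $\id+v\cdot\la$ of $H^0(\P^1,\OO(n-2))$, whose determinant is $1+\la(v)=2$. In characteristic $2$ this endomorphism is not surjective: writing $w=w_0+cv$ with $w_0\in\ker\la$ gives $w+\la(w)v=w_0$, so the span of your elements is exactly the hyperplane $\ker\la$. (Worse, in characteristic $2$ the tensors $v\ot w+w\ot v$ do not even span $S_2$, since $2\,u\ot u=0$.) The repair is immediate and keeps the argument characteristic-free: also evaluate on the diagonal symmetric tensors. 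For any $u$ with $\la(u)=1$ one has $\iota_\la(u\ot u)=\la(u)u=u$, while for $w\in\ker\la$ one has $\iota_\la(u\ot w+w\ot u)=w$; together these span all of $H^0(\P^1,\OO(n-2))$ in every characteristic, which completes your proof.
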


\Pf . The vanishing of $H^1$ follows from the exact sequence \eqref{ribbon-str-sh-seq}, twisted by $\OO(d)$ for $d\ge 2$.
In the case $d=2$ the required surjectivity follows from the dimension count using our description of quadrics through $\RR$.
In the case $d\ge 3$, Lemma \ref{ribbon-sur-lem} implies the surjectivity of the left vertical arrow in the morphism of
exact sequences
\begin{diagram}
0&\rTo{}& H^0(\II_{C_r}(d))&\rTo{}&H^0(\OO_{\P^n}(d))&\rTo{}&H^0(\OO_{C_r}(d))&\rTo{}&0\\
&&\dTo{}&&\dTo{}&&\dTo{\id}\\
0&\rTo{}& H^0(\om_{C_r}(d-1))&\rTo{}& H^0(\OO_{\RR_\la}(d))&\rTo{}& H^0(\OO_{C_r}(d))&\rTo{}& 0
\end{diagram}
Hence, the middle vertical arrow is also surjective.
\ed

\begin{thm}\label{rel-hyperell-thm}
Let $g\ge 3$, $d\ge 2$. For a hyperelliptic curve $C_0$, a relation $f_0\in\ker(\mu^d_{C_0})$
is a limit of canonical relations if and only if
$\phi_d(f_0)$ has rank $<g-2$.
\end{thm}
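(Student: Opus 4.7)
The plan is to reduce the theorem to a criterion for a canonical relation to vanish on a canonical ribbon, combined with the fact (Fong \cite{Fong}, Lemma \ref{limit-lem}) that hyperelliptic limits in the canonical Hilbert scheme are precisely canonical ribbons.

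First I would generalize Proposition \ref{deg-qu-prop}(i) from $d=2$ to arbitrary $d\ge 2$: for $\la\in H^0(\P^1,\OO(g-3))^*$, an element $f\in H^0(\P^{g-1},\II_{C_r}(d))$ vanishes on the canonical ribbon $\RR_\la$ if and only if $\iota_\la(\phi_d(f))=0$, where $\iota_\la$ denotes contraction on the first tensor factor in \eqref{phi-d-eq}. The argument mirrors Proposition \ref{deg-qu-prop}: the morphism of exact sequences relating the ideals of $C_r$ and $\RR_\la$ is induced by $\la:N^\vee=\II_{C_r}/\II_{C_r}^2\to \om_{C_r}(-1)$, and under Lemma \ref{normal-bundle-lem} the resulting map on global sections in degree $d$ is exactly $\iota_\la\circ \phi_d$. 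Consequently $f$ vanishes on \emph{some} canonical ribbon if and only if $\phi_d(f)$, viewed as a $(g-2)\times((d-1)(g-1)-1)$ matrix, has rank $<g-2$.

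For the ``only if'' direction, assume $f_0$ is the central-fiber specialization of some $f\in\ker(\mu^d_C)$ along a family $C/R$ with $C_K$ non-hyperelliptic. By Lemma \ref{limit-lem} the Hilbert scheme limit of the canonical images is a canonical ribbon $\RR_\la$ on $C_r$ for some $\la\ne 0$. Under the identification of $\ker(\mu^d_C)$ with the space of relative degree-$d$ forms vanishing on the canonical image, specializing $f$ lands it in $H^0(\P^{g-1}_k,\II_{\RR_\la}(d))$, so $f_0$ vanishes on $\RR_\la$, and the previous step forces $\phi_d(f_0)$ to have rank $<g-2$.

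For the ``if'' direction, suppose $\phi_d(f_0)$ has rank $<g-2$ and choose $\la\in H^0(\P^1,\OO(g-3))^*\setminus\{0\}$ with $\iota_\la(\phi_d(f_0))=0$, so that $f_0\in H^0(\P^{g-1},\II_{\RR_\la}(d))$. By Fong's observation recalled in the introduction, every canonical ribbon on $C_r$ arises as the Hilbert scheme limit of a nonhyperelliptic deformation of the fixed hyperelliptic $C_0$; applying this to $\RR_\la$ yields a family $C/R$ with $C_k=C_0$, $C_K$ non-hyperelliptic, and Hilbert limit $\RR_\la$. To lift $f_0$ to a section for the family, combine the exact sequence $0\to \II_{\RR_\la}(d)\to \OO_{\P^{g-1}}(d)\to \OO_{\RR_\la}(d)\to 0$ with Lemma \ref{ribbon-coh-lem} to obtain $H^1(\P^{g-1}_k,\II_{\RR_\la}(d))=0$; cohomology and base change then ensure that $H^0(\P^{g-1}_R,\II_{f(C)/R}(d))\otimes_R k$ surjects onto $H^0(\P^{g-1}_k,\II_{\RR_\la}(d))$. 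Any $R$-lift $f$ of $f_0$ lies in $\ker(\mu^d_C)$, and its generic fiber $f_K$ is the required canonical relation specializing to $f_0$.

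The main obstacle is the ``if'' direction, specifically the geometric input from Fong's work identifying every canonical ribbon on $C_r$ as the Hilbert limit of a nonhyperelliptic deformation of the given $C_0$; once that is in hand, the lifting of $f_0$ to the family is a routine application of cohomology and base change via Lemma \ref{ribbon-coh-lem}.
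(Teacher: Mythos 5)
Your proposal is correct, and most of its steps coincide with the paper's own proof: the equivalence ``$\phi_d(f_0)$ has rank $<g-2$ iff $f_0$ vanishes on some canonical ribbon $\RR_\la$'' (which the paper, like you, gets by re-running the proof of Proposition \ref{deg-qu-prop}(i) in degree $d$), the ``only if'' direction via Lemma \ref{limit-lem} and specialization, and the lifting of $f_0$ over the dvr via $H^1(\RR_\la,\OO_{\RR_\la}(d))=0$ and the surjectivity statement of Lemma \ref{ribbon-coh-lem} are all exactly the paper's steps. The genuine divergence is the key existence statement: a family $C/R$ with $C_k\simeq C_0$, $C_K$ non-hyperelliptic, whose Hilbert-scheme limit is the \emph{prescribed} ribbon $\RR_\la$. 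You import this from Fong \cite{Fong}; the paper instead deduces it from its own Theorem \ref{quadric-hyperell-thm}: pick $q_0\in\ker(\mu^2_{C_0})$ vanishing on $\RR_\la$ whose class in $S_2H^0(\P^1,\OO(g-3))$ has rank exactly $g-3$. Since the quadrics vanishing on $\RR_\la$ are precisely $S_2(\ker\la)$, such a $q_0$ exists and is degenerate, so Theorem \ref{quadric-hyperell-thm} produces a family $(C,q)$ deforming $(C_0,q_0)$ with $C_K$ non-hyperelliptic; moreover the left kernel of $q_0$ is spanned by $\la$, so the limit ribbon --- on which $q_0$ must vanish --- is forced to be $\RR_\la$ itself. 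This corank-one-quadric trick reduces degree $d$ to the already-proved degree $2$ case, keeping the proof self-contained and free of characteristic hypotheses, consistent with the theorem's statement. Your route is shorter, but it stands or falls with Fong's theorem holding in the precise form you invoke: for a \emph{fixed} special fiber $C_0$, for \emph{every} prescribed ribbon on its canonical image, realized by an actual family over a dvr (not merely first-order data), and over the ground field at hand; Fong's argument is given in characteristic zero, so as written your proof is less general than the paper's and trades an internal two-line argument for an external dependency whose exact hypotheses must be verified.
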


\Pf . First, we observe that $\phi_d(f_0)$ has rank $<g-2$ if and only if there exists some $\la\in H^0(\P^1,\OO(g-3))^*$
such that $(\la\ot\id)(\phi_d)=0$. The latter condition is equivalent to $f_0$ being in the kernel of the map
$$H^0(\P^{g-1},\II_{C_r}(d))\to H^0(\RR_{\la},\OO),$$
where $\RR_\la$ is the canonical ribbon corresponding to $\la$ (see the proof of Proposition \ref{deg-qu-prop}(i)).

Now, as in Theorem \ref{quadric-hyperell-thm}, we deduce using Lemma \ref{limit-lem}
that any $f_0$ which is a limit of canonical relations, vanishes on some canonical ribbon $\RR_\la$, and hence
$\phi_d(f_0)$ has rank $<g-2$.

Conversely, suppose $f_0$ vanishes on some canonical ribbon $\RR_\la$.
Let us pick a quadric $q_0\in\ker(\mu^2_{C_0})$, vanishing on $\RR_\la$, for which the corresponding element of
$S_2H^0(\P^1,\OO(g-3))$ has rank $g-3$. 
Then by Theorem \ref{quadric-hyperell-thm}, there exists a family $(C,q)$ over a dvr $R$ deforming $(C_0,q_0)$
with $C_K$ non-hyperelliptic. By Lemma \ref{limit-lem}, the limiting point $\lim[f(C)]$ in the Hilbert
scheme is a canonical ribbon $\RR=\RR_{\la'}$, such that $q_0$ vanishes on $\RR$. But this implies that $\la'$ is proportional
to $\la$, so $\RR=\RR_\la$. 

Now we claim that the $R$-module
$M:=\ker(\mu^d_C)$ is flat and $M\ot_R k\simeq\ker(\mu^d_{C_0})$. 
Indeed, this follows from the vanishing of $H^1(\RR,\OO_\RR(d))$ and from the surjectivity
of the restriction map $H^0(\P^{g-1},\OO(d))\to H^0(\RR,\OO_\RR(d))$ (see Lemma \ref{ribbon-coh-lem}). 

Our element $f_0$ belongs to $M\ot_R k$, hence, it can be lifted to an element $f\in M$. Thus, $f_0$ is a limit
of canonical relations.
\ed

\section{Blow-ups of the hyperelliptic and ribbon loci}

\subsection{Rational maps to the Grassmannians and Fitting ideals}

Let $X$ be a scheme, $\VV$, $\WW$ vector bundles, and $f:\VV\to\WW$ a morphism,
which is surjective over a dense open subset $U\sub X$. Then it defines a section
$$\si:U\to G_k(\VV),$$
of the relative Grassmanian over $X$ associated with $\VV$, where $k=\rk\VV-\rk\WW$.

Let us consider the $0$th Fitting ideal of $\coker(f)$,
$\Fitt_0(\coker(f))$. By definition, it is the vanishing ideal of the map 
$${\bigwedge}^r(f):{\bigwedge}^r(\VV)\to {\bigwedge}^r(\WW),$$
where $r$ is the rank of $\WW$.

\begin{prop}\label{Fitting-prop} 
Let $\wt{X}\to X$ be the blow-up of
$X$ at $\Fitt_0(\coker(f))$. Then the section $\si$ extends to a closed embedding 
$$\wt{\si}:\wt{X}\to G_k(\VV).$$
Furthermore, if $X$ is integral then $\wt{X}$ is identified with the closure in $G_k(\VV)$
of the rational section provided by $\si$.
\end{prop}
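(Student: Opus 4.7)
The plan is to extend $\sigma$ to a morphism $\wt{\sigma}$ on all of $\wt{X}$, and then to identify $\wt{\sigma}(\wt{X})$ with a natural closed subscheme of $G_k(\VV)$ (in the integral case, with the graph closure of $\sigma$) to obtain the closed embedding property. The key input throughout is that on $\wt{X}$ the pulled-back Fitting ideal $\pi^{-1}\Fitt_0(\coker f)\cdot \OO_{\wt{X}}$ is the invertible ideal $\OO_{\wt{X}}(-E)$, where $E$ is the exceptional divisor.

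To construct $\wt{\sigma}$, I would produce a rank-$r$ locally free quotient of $\pi^*\VV$ on $\wt{X}$, which by the universal property of the Grassmannian $G^r(\VV)=G_k(\VV)$ yields the desired morphism. Working locally on $X=\Spec R$ with trivializations $\VV\cong R^n$ and $\WW\cong R^r$, the map $f$ becomes an $r\times n$ matrix $A$ whose $r\times r$ minors $m_J$ generate $\Fitt_0(\coker f)$. On a blow-up chart where a fixed minor $m_{J_0}$ locally generates $\pi^{-1}\Fitt_0$, the adjugate identity
\[
\mathrm{adj}(A_{J_0})\cdot A = m_{J_0}\cdot B
\]
produces an $r\times n$ matrix $B$ whose entries are $1$, $0$, or blow-up coordinates $\pm m_J/m_{J_0}$ (all regular on the chart) and whose $J_0$-minor equals $1$. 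Thus $B$ defines a surjection $\OO^n_{\wt{X}}\twoheadrightarrow\OO^r_{\wt{X}}$ on the chart, agreeing with $f$ on the preimage of $U$ up to the invertible change of basis $A_{J_0}$, and hence extending $\sigma$ locally. These local morphisms glue on chart overlaps because the corresponding rank-$k$ kernels agree as subbundles of $\pi^*\VV$ (both are the unique locally-direct-summand extension of $\ker f|_U$).

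To show $\wt{\sigma}$ is a closed embedding, I focus on the case when $X$ is integral, so that $\wt{X}$ is integral. Let $Z\subset G_k(\VV)$ be the scheme-theoretic image of $\wt{\sigma}$; it is a reduced irreducible closed subscheme. On $Z$, the restriction of the tautological rank-$k$ subbundle $\SS\subset \pi^*\VV$ satisfies that the composition $\SS\to\pi^*\WW$ vanishes, because it does on the dense open $\sigma(U)\subset Z$ and $Z$ is reduced. Hence $\pi^*f|_Z$ factors through the rank-$r$ quotient $\pi^*\VV/\SS$, giving a map $\pi^*\VV/\SS\to\pi^*\WW$ of rank-$r$ bundles. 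Taking determinants produces a map $\det(\pi^*\VV/\SS)\to\det\pi^*\WW$ of line bundles which is an isomorphism over $\sigma(U)$, hence injective on the integral $Z$. Its image identifies $\pi^{-1}\Fitt_0(\coker f)\cdot \OO_Z$ with an invertible ideal, so by the universal property of the blow-up, $Z\to X$ factors uniquely as $Z\to\wt{X}\to X$. Checking via the universal property of $G_k(\VV)$ that this factorization is inverse to $\wt{\sigma}:\wt{X}\to Z$ shows that $\wt{\sigma}$ is an isomorphism onto $Z$, hence a closed embedding. The second claim now follows: the integral closed subscheme $Z$ contains the open dense $\sigma(U)$, so $Z$ is the closure of the graph of $\sigma$.

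The hardest step, I anticipate, is treating non-integral $X$ for the closed embedding claim: one must ensure that the scheme-theoretic image $Z$ inherits the universal property of the blow-up when $\wt{X}$ is not reduced, since the argument that $\SS\to \pi^*\WW$ vanishes on $Z$ relies on transporting vanishing from $\sigma(U)$ scheme-theoretically. This should go through using scheme-theoretic density of $\pi^{-1}(U)$ in $\wt{X}$ along with a functorial formulation of the Grassmannian's universal property, but it is more delicate than the integral case, where scheme-theoretic density is automatic.
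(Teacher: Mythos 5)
Your construction of $\wt{\si}$ and your integral-case argument are correct, and the route is genuinely different from the paper's; but there is a real gap. The closed-embedding assertion in the proposition carries \emph{no} integrality hypothesis (integrality enters only in the ``furthermore'' clause), and for non-reduced $X$ your proof is an acknowledged sketch whose proposed mechanism is precisely the step that fails: on a non-reduced scheme, a section of a vector bundle can vanish on a topologically dense open without vanishing, so you cannot transport the vanishing of $\SS\to p^*\WW$ (write $p$ for the projection $G_k(\VV)\to X$) from $\si(U)$ to the scheme-theoretic image $Z$, and the same defect affects your gluing step (``unique locally-direct-summand extension from a dense open''). The gap is fixable inside your framework, but by a different mechanism: on the chart where $m_{J_0}$ generates the exceptional ideal, combining $\mathrm{adj}(A_{J_0})A=m_{J_0}B$ with $A_{J_0}\mathrm{adj}(A_{J_0})=m_{J_0}1_r$ gives $m_{J_0}(A-A_{J_0}B)=0$, and $m_{J_0}$ is a \emph{nonzerodivisor} (it generates an invertible ideal), so $\pi^*f=A_{J_0}B$ on the nose. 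Hence $\ker B=\ker(\pi^*f)$, which glues the charts without any density argument, and shows that $\wt{\si}^*\SS\to\pi^*\WW$ vanishes identically on $\wt{X}$, not just generically. That vanishing then descends to $Z$ via the injection $\OO_Z\hra\wt{\si}_*\OO_{\wt{X}}$ characterizing the scheme-theoretic image, and the remaining steps (invertibility of $\Fitt_0(\coker(f))\cdot\OO_Z$, and the two composites with $Z\to\wt{X}$ being identities) go through once every appeal to ``dense'' is replaced by ``scheme-theoretically dense,'' which holds for $\pi^{-1}(U)$ because it is the complement of an effective Cartier divisor.

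For comparison, the paper's proof avoids all of this bookkeeping: it notes that the Rees algebra $\bigoplus_{n\ge 0}I^n$ of the ideal $I=\Fitt_0(\coker(f))$, which is the image of the cosection $F:\bigwedge^r(\VV)\ot\bigwedge^r(\WW)^{-1}\to\OO_X$, is a quotient of the symmetric algebra of $\bigwedge^r(\VV)\ot\bigwedge^r(\WW)^{-1}$, so that $\wt{X}=\Proj\bigl(\bigoplus_n I^n\bigr)$ is closedly embedded in the Pl\"ucker projective space for \emph{arbitrary} $X$; that the image lies in the Grassmannian is then a polynomial identity, since the homogeneous coordinates of the embedding are the $r\times r$ minors of $f$ and these satisfy the Pl\"ucker relations. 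No scheme-theoretic image, no universal property of the blow-up, and no reducedness enters anywhere. Your route, once repaired as above, is a legitimate alternative and has the merit of exhibiting the inverse morphism $Z\to\wt{X}$ explicitly via the universal property of the blow-up; the paper's is shorter and uniform over any base by design.
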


\begin{proof} First, consider the situation when we have a cosection of a vector bundle $F:\VV\to\OO_S$.
Then the surjection of graded algebras
$$S^\bullet(\VV)\to \bigoplus_{n\ge 0}F(\VV)^n$$
gives us an identification of the
blow-up of $X$ in the ideal $F(\VV)\sub\OO_S$ with a closed subset of $\P(\VV^\vee)$.

In our situation, we can apply the above construction to the cosection
$$F:{\bigwedge}^r(\VV)\ot{\bigwedge}^r(\WW)^{-1}\to\OO_S$$
induced by $\we^r(f)$, so we get a closed embedding
$$\wt{\si}:\wt{X}\to \P(\VV^\vee).$$ We claim that in fact it factors
through the Grassmannian $G_k(\VV)$ embedded into $\P(\VV^\vee)$ via the
Pl\"ucker embedding. Indeed, locally the components of $F$ are given by the
$r\times r$ minors of the matrix of $f$, so they satisfy the Pl\"ucker
relations.

The last assertion is clear since $\wt{X}$ is integral and $\Fitt_0(\coker(f))$ is supported on the complement of $U$.
\end{proof}

We will need the following technical assertion.

\begin{lem}\label{Fitt-power-ideal-lem} 
Let $X$ be a scheme, $Y\sub X$ a closed subscheme with the ideal $\II_Y$,
and let $f:\VV\to\WW$ be a morphism of vector bundles over $X$, with $\rk\VV\ge \rk\WW$,
such that $\coker(f|_Y:\VV|_Y\to \WW|_Y)$
is locally free of rank $r$ on $Y$. Then locally near every point of $Y$ there exists a morphism of vector bundles
$f':\VV'\to\WW'$, such that $[\VV\to\WW]$ is quasi-isomorphic to $[\VV'\to\WW']$
and $f'|_Y=0$. In particular, we have
$$\Fitt_0(\coker(f))\sub \II_Y^r.$$
\end{lem}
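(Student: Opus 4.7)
The plan is to establish the first (normal-form) claim by a local change-of-basis argument, and then read off the Fitting-ideal inclusion directly from the resulting normal form. Throughout I work in an affine neighborhood of a chosen point $y \in Y$ where both $\VV$ and $\WW$ are trivialized.

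Set $s = \rk\WW - r$, the local rank of $\im(f|_Y)$. Using the assumption that $\coker(f|_Y)$ is locally free of rank $r$, I pick local sections $\tilde e_1, \dots, \tilde e_s$ of $\VV$ whose restrictions to $Y$ span a complement of $\ker(f|_Y)$ in $\VV|_Y$, and extend to a local basis of $\VV$ by sections $\tilde e_{s+1}, \dots$ whose restrictions form a basis of $\ker(f|_Y)$. Similarly I extend the sections $f(\tilde e_i)|_Y$, $i \le s$, to a local basis $w_1, \dots, w_{\rk\WW}$ of $\WW$ using lifts of a basis of $\coker(f|_Y)$. In these bases, the matrix of $f|_Y$ is $\begin{pmatrix} I_s & 0 \\ 0 & 0 \end{pmatrix}$, and so the matrix of $f$ over $X$ has block form $\begin{pmatrix} I_s+A & B \\ C & D \end{pmatrix}$ with $A,B,C,D$ having entries in $\II_Y$. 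Since the top-left block reduces to $I_s$ modulo $\II_Y$, after shrinking the neighborhood it is invertible with inverse $M$, and standard block row/column operations bring the matrix to the block-diagonal form $\begin{pmatrix} I_s+A & 0 \\ 0 & D-CMB \end{pmatrix}$. The resulting decompositions $\VV = \VV_1' \oplus \VV_2'$, $\WW = \WW_1' \oplus \WW_2'$ split $[\VV \to \WW]$ as the direct sum of the acyclic complex $[\VV_1' \xrightarrow{\sim} \WW_1']$ and a complex $[\VV_2' \xrightarrow{f'} \WW_2']$ with $f' = D - CMB$; dropping the acyclic summand yields the claimed quasi-isomorphism, and since $B$, $C$, $D$ all vanish on $Y$, so does $f'$.

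For the Fitting-ideal statement, the direct-sum splitting gives $\coker(f) \cong \coker(f')$, so $\Fitt_0(\coker(f)) = \Fitt_0(\coker(f'))$, which is locally generated by the $r \times r$ minors of the matrix of $f'$. Each such minor is a signed sum of products of $r$ entries of $f'$, each entry lies in $\II_Y$, and hence each minor lies in $\II_Y^r$, giving the inclusion. The main obstacle is the bookkeeping around the local trivialization: one must verify, via Nakayama's lemma, that lifts of bases chosen on $Y$ extend to bases of $\VV$ and $\WW$ in a neighborhood of $y$, and that $I_s + A$ remains invertible there after shrinking; once this is done, the algebraic manipulations above carry through without further difficulty.
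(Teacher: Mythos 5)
Your proof is correct and takes essentially the same approach as the paper's: both localize near a point of $Y$, use the local freeness of $\coker(f|_Y)$ (hence of $\im(f|_Y)$ and $\ker(f|_Y)$) to choose bases in which $f|_Y=\left(\begin{smallmatrix}1&0\\0&0\end{smallmatrix}\right)$, and then change bases so that $f$ itself becomes block-diagonal with the lower-right block having all entries in $\II_Y$, from which the inclusion $\Fitt_0(\coker(f))\sub \II_Y^r$ is immediate. The only difference is expository: you carry out explicitly the Schur-complement elimination and the splitting off of the acyclic summand, steps which the paper compresses into the phrase ``for a suitable choice of bases in $\VV$ and $\WW$''.
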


\Pf . We can assume $X$ to be affine. The fact that $\coker(f_Y)$ is locally free implies that
we have a direct sum decomposition 
$$\WW|_Y=\im(f|_Y)\oplus C,$$
where $C$ is a bundle of rank $r$ on $Y$. 
Let us replace $X$ by a local ring $A$, so that $\II_Y$ corresponds to a proper ideal $I\sub A$.
The $A/I$-modules $\im(f|_Y)$ and $C$ are free, so with respect to suitable bases of $\VV|_Y$ and $\WW|_Y$,
we will have
$$f|_Y=\left(\begin{matrix} 1_p & 0 \\ 0 & 0 \end{matrix}\right),$$
where $p=\rk \im(f|_Y)$.
Thus, for a suitable choice of bases in $\VV$ and $\WW$, we have
$$f=\left(\begin{matrix} 1_p & 0 \\ 0 & M\end{matrix}\right),$$
for some $s\times r$-matrix $M$ with entries in $I$ (where $s=\rk \ker(f|_Y)$). 
The assertion immediately follows from this.
\ed

\subsection{Obstruction associated with a $2$-term complex}\label{2term-obs-sec}

Suppose we are given a two-term complex $W^0\to W^1$ of vector 
bundles on a smooth scheme $X$, and let $Z$ be a smooth subsheme of 
$X$ such that $\und{H}^1(W^\bullet|_Z)$ is a vector bundle, so that 
$\dim(H^0(W^\bullet\otimes \sO_z))$ is constant for $z\in Z$.  

Suppose that $A$
is a local Artin algebra and $I\subset A$ is a square zero ideal, and let 
$z\in Z(A/I)$. Then we know that $H^0(W^\bullet\otimes A/I)$ is a free $A/I$-module.  Given a 
point $x\in X(A)$ extending $z\in X(A/I)$, we want to know which elements of this free 
module extend to $H^0(W^\bullet\otimes A)$, i.e., belong to the image of the map
$$H^0(W^\bullet\otimes A)\to H^0(W^\bullet\otimes A/I).$$
Using the exact sequence of cohomology associated with the exact triple of complexes
$$0\to W^\bullet\ot I\to W^\bullet\ot A\to W^\bullet \ot A/I\to 0$$
we immediately see that the obstruction is given by the coboundary map
$$H^0(W^\bullet\otimes A/I)\to H^1(W^\bullet\otimes I).$$
Note that this map is induced by the element $e_x\in\Ext^1_X(A/I,I)$ given by 
the class of the extension
$$0\to I\to A\to A/I\to 0$$ 
of  $\sO_X$-modules.  The set of 
extensions of $z$ to a point of $X(A)$ is a torsor over $T_X\otimes I$, and the map $x\mapsto e_x$
from this torsor to $\Ext^1_X(A/I,I)$ is a torsor map relative to a homomorphism 
$T_X\otimes I\to \Ext^1_X(A/I,I)$.  The induced map from this torsor to
$$\Hom_{A/I}(H^0(W^\bullet\otimes A/I),H^1(W^\bullet\otimes I))$$
vanishes on the classes corresponding to $x\in Z(A)$, and thus factors 
through a homomorphism from the trivial quotient torsor 
$N_{X/Z}\otimes I$.  

In particular, we can apply this construction to $A=k[\eps]/(\eps^2)$ to get a pairing
$$\kappa:N_{X/Z}|_z\otimes H^0(W^\bullet|_z)\to H^1(W^\bullet|_z)$$
such that $w\in H^0(W^\bullet)$ deforms to the first order in the direction of $v\in N_{X/Z}|_z$ if and only if
$\kappa(v,w)=0$.

\subsection{Blow up of the hyperelliptic locus}

For $d\ge 2$, let us consider the morphism of vector bundles 
\begin{equation}\label{mu-d-moduli-complex}
\mu^d:S^dp_*(\om_{\CC_g/\MM_g})\to p_*(\om_{\CC_g/\MM_g}^{\ot d})
\end{equation}
over $\MM_g$, where $p:\CC_g\to \MM_g$ is the universal curve,
and let $\Hyp_g\sub\MM_g$ denote the hyperelliptic locus.
As is well known, for $d\ge 2$, the map $\mu^d$ is surjective over $\MM_g\setminus\Hyp_g$
and defines a section of the relative Grassmannian associated with $S^dp_*(\om_{\CC_g/\MM_g})$,
which factors through the relative Hilbert scheme $\HH_g/\MM_g$ containing canonical curves of genus $g$.
On the other hand, $\coker(\mu^d|_{\Hyp_g})$ is a vector bundle over $\Hyp_g$ so we can apply the
construction of Section \ref{2term-obs-sec} to the two-term complex \eqref{mu-d-moduli-complex}
and the closed locus $\Hyp_g$.

\begin{lem}\label{two-obstr-lem} 
For any hyperelliptic curve $C_0$, 
the obstruction pairing
$$\kappa:N\otimes \ker(\mu^d_C)\to \coker(\mu^d_C),$$
where $N$ is the normal space to the hyperelliptic locus at $C_0$
can be identified with the pairing $-\lan \cdot,\cdot\ran$
defined by \eqref{line-bun-obstr-pairing} for the canonical map $C_0\to \P^{g-1}$ and the line bundle $\LL=\OO_{\P^{g-1}}(d)$.
Furthermore, via the isomorphism $N\simeq H^0(\P^1,\OO(g-3))^\vee$ (see \eqref{obs-normal-space-eq}),
$-\kappa$ can be identified with the pairing
\begin{equation}\label{psi-d-eq}
\psi_d:H^0(\P^1,\OO(g-3))^\vee\ot H^0(\P^{g-1},\II_{\P^1}(d))\to H^0(\P^1,\OO((d-1)(g-1)-2))
\end{equation}
obtained from $\phi_d$ by dualization (see \eqref{phi-d-eq}).
\end{lem}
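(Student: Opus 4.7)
The plan is to directly compare the abstract obstruction $\kappa$ of Section \ref{2term-obs-sec} with the pairing $\langle\cdot,\cdot\rangle$ of Proposition \ref{general-obstr-prop}(ii) applied to the canonical map $f: C_0 \to \P^{g-1}$ with line bundle $\LL = \OO(d)$, and then to factor $\langle\cdot,\cdot\rangle$ through the normal-space map $\obs$ of Proposition \ref{hyperell-obs-prop}. The basic identifications come from the factorization $C_0 \to \P^1 \to \P^{g-1}$ of the canonical map: since $f_*\OO_{C_0} = i_*(\OO_{\P^1} \oplus \OO_{\P^1}(-g-1))$, the kernel of $\mu^d_{C_0}$ is $H^0(\P^{g-1}, I_{\P^1}(d))$ and its image is the summand $H^0(\P^1, \OO(d(g-1)))$ of $H^0(C_0, \om_{C_0}^{\otimes d})$, so that $\coker(\mu^d_{C_0}) \simeq H^0(\P^1, \OO((d-1)(g-1)-2))$. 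This last space is also the $H^0$ of $(f_*\OO_{C_0}/i_*\OO_{\P^1})\otimes\OO(d)$ appearing in the target of $\langle\cdot,\cdot\rangle$.

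To identify $\kappa$ with $-\langle\cdot,\cdot\rangle$, I would choose a local trivialization of the Hodge bundle $\VV$ on a formal neighborhood of $[C_0]$. Together with the first-order deformation in the direction $v \in N$, this produces a deformation $\wt{f}: \wt{C} \to \P^{g-1} \times \Spec k[\eps]/(\eps^2)$ of $f$ to which Proposition \ref{general-obstr-prop}(ii) applies, and it trivializes $S^d\VV$ in this neighborhood. From the description of $\kappa$ via the coboundary map in Section \ref{2term-obs-sec}, one then has $\kappa(v, x) = \wt{f}^*(x)/\eps$ read modulo $\im(\mu^d_{C_0})$; comparing with formula \eqref{line-bun-obstr-pairing-formula} and the identifications of the previous paragraph gives $\kappa = -\langle\cdot,\cdot\rangle$. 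Independence of the trivialization is automatic because both $\ker(\mu^d_{C_0})$ and $\coker(\mu^d_{C_0})$ admit canonical descriptions.

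For the identification of $-\kappa$ with $\psi_d$, I would apply Proposition \ref{general-obstr-prop}(i) to factor $\langle[\wt{f}], x\rangle = \obs([\wt{f}])(x)$, where $\obs([\wt{f}]) \in \Hom_{\P^{g-1}}(I_{\P^1}, i_*\OO_{\P^1}(-g-1))$ further factors through $\Hom_{\P^{g-1}}(I_{\P^1}/I_{\P^1}^2, i_*\OO_{\P^1}(-g-1)) \simeq H^0(\P^1, \OO(g-3))^\vee$ via Lemma \ref{normal-bundle-lem}. Proposition \ref{hyperell-obs-prop} and Corollary \ref{ker-obs-cor} identify this factored map with the isomorphism $\obs: N \to H^0(\P^1, \OO(g-3))^\vee$ of \eqref{obs-normal-space-eq}. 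Simultaneously, evaluation on $x$ factors through $\phi_d(x) \in H^0(\P^1, \OO(g-3)) \otimes H^0(\P^1, \OO((d-1)(g-1)-2))$, so that $\obs([\wt{f}])(x) = (\obs(v) \otimes \id)(\phi_d(x)) = \psi_d(\obs(v), x)$. Combined with the previous step, this yields $-\kappa(v, x) = \psi_d(\obs(v), x)$, as claimed. The main obstacle is bookkeeping the several canonical versus chosen identifications — the decomposition of $g_*\OO_{C_0}$, Lemma \ref{normal-bundle-lem}, and the factorization of $\obs$ through $I_{\P^1}/I_{\P^1}^2$ — in particular to confirm that the sign comes out consistently in each step.
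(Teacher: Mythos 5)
Your proposal is correct and takes essentially the same route as the paper: the first identification comes from comparing the coboundary description of $\kappa$ with formula \eqref{line-bun-obstr-pairing-formula}, and the second from factoring the obstruction through $\II_{\P^1}/\II_{\P^1}^2$ via Lemma \ref{normal-bundle-lem} and invoking Corollary \ref{ker-obs-cor}. The paper's proof is simply a terser statement of this same argument, so your write-up just supplies the omitted details.
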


\Pf . The first assertion easily follows from formula \eqref{line-bun-obstr-pairing-formula}. The second follows
from this and from Corollary \ref{ker-obs-cor}.
\ed

\begin{prop}\label{Fitt-power-ideal-prop} 
The $0$th Fitting ideal of the sheaf $\coker(\mu^d)$ over $\MM_g$ coincides with
the ideal sheaf $\II_{\Hyp_g}^r$ where $r=(d-1)(g-1)-1$.
\end{prop}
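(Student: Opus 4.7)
The plan is to establish the two inclusions separately. The inclusion $\Fitt_0(\coker(\mu^d))\subset\II_{\Hyp_g}^r$ follows at once from Lemma \ref{Fitt-power-ideal-lem}, since $\coker(\mu^d)|_{\Hyp_g}$ is locally free of rank $r$ (identified fiberwise with $B:=H^0(\P^1,\OO((d-1)(g-1)-2))$). Away from $\Hyp_g$ both ideals equal $\OO_{\MM_g}$, so the proposition reduces to establishing $\II_{\Hyp_g}^r\subset\Fitt_0(\coker(\mu^d))$ locally at an arbitrary hyperelliptic point $[C_0]\in\Hyp_g$.

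Choose local coordinates $t_1,\ldots,t_{g-2},s_1,\ldots,s_{2g-1}$ at $[C_0]$ with $\II_{\Hyp_g}=(t_1,\ldots,t_{g-2})$, and apply the construction in the proof of Lemma \ref{Fitt-power-ideal-lem} to put $\mu^d$ in the block form $\left(\begin{smallmatrix}1_p & 0\\ 0 & M(t,s)\end{smallmatrix}\right)$, with $M$ an $r\times s$ matrix whose entries lie in $\II_{\Hyp_g}=(t_1,\ldots,t_{g-2})$. Thus $\Fitt_0$ is generated by the $r\times r$ minors of $M$. A Nakayama-type argument in the completed local ring, working modulo $\II_{\Hyp_g}^{r+1}+(s_1,\ldots,s_{2g-1})\II_{\Hyp_g}^r$ (in which quotient each $r\times r$ minor of $M$ reduces to the $r\times r$ minor of its purely $t$-linear part $\sum_i t_i M_i(0)$, since every ``higher'' contribution lies in $(t)^{r+1}+s(t)^r$), shows that $\II_{\Hyp_g}^r\subset\Fitt_0$ will follow from the single statement that the $r\times r$ minors of $\sum_i t_i M_i(0)$ span the degree-$r$ part $\Sym^r A$ of $\gr_{\II_{\Hyp_g}}\OO$ at $[C_0]$, where $A=H^0(\P^1,\OO(g-3))$ via Corollary \ref{ker-obs-cor}. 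By Lemma \ref{two-obstr-lem}, this linearization is, up to sign, the pairing $\psi_d:A^\vee\ot K\to B$ with $K=H^0(\P^{g-1},\II_{\P^1}(d))$.

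Thus everything reduces to the following combinatorial claim: the $r\times r$ minors of $\psi_d(\la):K\to B$, viewed as degree-$r$ polynomials on $A^\vee$, span $\Sym^r A$. Since $\psi_d(\la)$ factors as $K\xrightarrow{\phi_d}A\ot B\xrightarrow{\la\ot\id}B$, it suffices to analyze the maximal minors of $(\la\ot\id)$ on the image of $\phi_d$. For $d\ge 3$, Lemma \ref{ribbon-sur-lem} gives that $\phi_d$ is surjective, and choosing $r$ columns of the form $(i_k,j_k)\in A\ot B$ with $(j_1,\ldots,j_r)$ a permutation of $(1,\ldots,r)$ yields the minor $\pm\prod_k c_{i_k}$ (for coordinates $c_i$ on $A^\vee$), so every degree-$r$ monomial is realized. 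For $d=2$, where $B=A$ and $r=g-2=\dim A$, Lemma \ref{quadratic-map-lem}(ii) identifies the image of $\phi_2$ with $S_2A\sub A\ot A$; one must verify directly that the $r\times r$ minors of $(\la\ot\id):S_2A\to A$ span $\Sym^r A$, by exhibiting for each monomial $\prod c_i^{e_i}$ (with $\sum e_i=r$) an explicit choice of $r$ columns of $S_2A$ (using ``diagonal'' columns $(i,i)$, contributing $c_i$ to row $i$, and ``off-diagonal'' columns $(i,j)$ with $i<j$, contributing $c_j$ to row $i$ and $c_i$ to row $j$) realizing this monomial as a nonzero scalar multiple of the determinant.

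The main obstacle is the combinatorial verification of the key claim in the $d=2$ case. Unlike the $d\ge 3$ case, where surjectivity of $\phi_d$ and free choice of a basis of $A\ot B$ make every degree-$r$ monomial trivially available, the restriction to symmetric tensors requires a careful construction: for each monomial $\prod c_i^{e_i}$ one builds an $r$-edge multigraph on $r$ vertices (with loops allowed) whose edges provide the required columns, and checks that every perfect matching in the associated row/column bipartite incidence contributes the same monomial up to sign, so that no cancellation occurs in the determinant.
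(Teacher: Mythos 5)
You follow the paper's proof step for step up to the final combinatorial claim: the inclusion $\Fitt_0(\coker(\mu^d))\sub \II_{\Hyp_g}^r$ via Lemma \ref{Fitt-power-ideal-lem}, the local replacement of the complex by a quasi-isomorphic one whose differential vanishes on $\Hyp_g$, the Nakayama reduction to the linearized matrix of linear forms, the identification of that linearization with $\psi_d$ via Lemma \ref{two-obstr-lem} and Corollary \ref{ker-obs-cor}, and the $d\ge 3$ case via surjectivity of $\phi_d$ (Lemma \ref{ribbon-sur-lem}) are all correct and coincide with the paper's argument.

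The genuine gap is the $d=2$ case, which you yourself flag as ``the main obstacle'' and then do not carry out. You reduce correctly to showing that the maximal minors of $(\la\ot\id)\colon S_2A\to A$ span $\Sym^r A$, and you list the available columns (diagonal $(i,i)$ giving $c_ie_i$, off-diagonal $(i,j)$ giving $c_je_i+c_ie_j$), but you never exhibit, for a given monomial, an actual choice of $r$ columns, nor verify nonvanishing of the resulting determinant; you only describe a multigraph/matching framework in which one ``builds\,\dots\,and checks''. This is precisely where the content lies for $d=2$: unlike the $d\ge 3$ case one cannot pick columns with a single prescribed nonzero entry each, and your stated criterion --- that all matchings contribute ``the same monomial up to sign, so that no cancellation occurs'' --- is not even sufficient, since equal monomials carrying opposite signs do cancel. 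The paper closes this by an explicit choice that makes the nonzero matching unique rather than merely sign-coherent: for a monomial $c_{i_1}^{a_1}\cdots c_{i_p}^{a_p}$ of degree $r$, partition $\{1,\ldots,g-2\}\setminus\{i_1,\ldots,i_p\}$ into sets $S_1\sqcup\ldots\sqcup S_p$ with $|S_k|=a_k-1$, and take the columns $(i_k,i_k)$ together with $(i_k,j)$ for $j\in S_k$. Each diagonal column has a single nonzero entry, so any nonzero term of the determinant must assign it row $i_k$; each column $(i_k,j)$ is then forced to use row $j$, where its entry is $c_{i_k}$. Hence the permutation expansion has exactly one nonzero term and the minor equals $\pm\prod_k c_{i_k}^{a_k}$, with no cancellation analysis needed. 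In your language: take each component of the multigraph to be a loop at $i_k$ with $a_k-1$ pendant edges. Adding this construction, and replacing your sign-coherence criterion by uniqueness of the matching, completes your proof.
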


\Pf . Since $\mu^d$ is non-surjective precisely on $\Hyp_g$,
we see that the radical of the $0$th Fitting 
ideal is equal to $\II_{\Hyp_g}$.
Furthermore, since the restriction of $\coker(\mu_d)$ to the hyperelliptic locus is a bundle of rank $r=(N-1)(g-1)-1$, 
by Lemma \ref{Fitt-power-ideal-lem}, the $0$th Fitting ideal is contained in $\II_{\Hyp_g}^r$, and 
working over a local ring $A$ in $\MM_g$ of a point $[C_0]$ in $\Hyp_g$, we can replace
the complex \eqref{mu-d-moduli-complex} by a quasi-isomorphic complex
$$\mu:V^0\to V^1$$ 
such that $\mu$ vanishes on the hyperelliptic locus. 

By Lemma \ref{two-obstr-lem}, the corresponding derivative map
$$\kappa=\nabla \mu: N\ot (V^0\otimes k) \to V^1\otimes k,$$
where $N$ is the normal space to the hyperelliptic locus, can be
identified (up to a sign) with the map $\psi_d$ (see \eqref{psi-d-eq}.
Thus, if $\MM_g$ has formal coordinates $x_1,\dots,x_n$ at $[C_0]$ and $\Hyp_g$ is cut out by 
$x_1,\dots,x_m$ then $\mu$ has the form
$$\mu=\phi_d(x_1,\ldots,x_m) \mod {\frak m}(x_1,\dots,x_m),$$
where ${\frak m}\sub A$ is the maximal ideal. Here we
identify $N^\vee$ with the space of linear forms in $x_1,\ldots,x_m$, and view $\phi_d$ as
a matrix of linear forms in $x_1,\ldots,x_m$, i.e., an element in $\Hom(V^0\otimes k,V^1\otimes k)\ot\lan x_1,\ldots,x_m\ran$.

We have to show that the maximal ($r\times r$) minors of 
the matrix $\mu$ generate 
$(x_1,\dots,x_m)^r$.  By Nakayama's Lemma, it is enough to check this modulo 
${\frak m}(x_1,\ldots,x_m)^r$, and thus we can replace $\mu$ by the matrix 
$\phi_d(x_1,\ldots,x_m)$.  

Assume first that $d\ge 3$. Then by Lemma \ref{ribbon-sur-lem}, the map
$\phi_d$ is surjective. This means that for appropriate choice of bases in $V^0\otimes k$ and $V^1\otimes k$,
the matrix of linear forms $\phi_d(x_1,\ldots,x_m)$ contains as a submatrix the $r\times rm$-matrix
$$\left(\begin{matrix} x_1\cdot 1_r & \ldots & x_m\cdot 1_r\end{matrix}\right).$$
Thus choosing $r$ columns suitably we can get any monomial in $x_1,\ldots,x_m$ as a maximal minor of $\phi_d$.

Now assume that $d=2$. In this case $m=r=g-2$, and $\phi_2$ is the canonical map
$$S_2(N^\vee)\to N^\vee \ot N^\vee.$$
Thus, the column of $\phi_2(x_1,\ldots,x_m)$ corresponding to a quadratic monomial $x_ix_j$, with $i\neq j$ is
$x_ie_j+x_je_i$, while the column corresponding to $x_i^2$ is $x_ie_i$.
Suppose we are given a monomial $x_{i_1}^{a_1}\ldots x_{i_p}^{a_p}$ of degree $r$, where $i_1<\ldots<i_p$, $a_i>0$.
Let us choose any partition
$$\{1,\ldots,m\}\setminus\{i_1,\ldots,i_p\}=S_1\sqcup\ldots\sqcup S_p,$$
with $|S_j|=a_j-1$, and take the columns corresponding to the following quadratic monomials:
$$x_{i_1}^2,(x_{i_1}x_j)_{j\in S_1},\ldots, x_{i_p}^2,(x_{i_p}x_j)_{j\in S_p}.$$
It is easy to see that the corresponding maximal minor of $\phi_2$ is equal to $\pm x_{i_1}^{a_1}\ldots x_{i_p}^{a_p}$.
\ed

\begin{rmk}
     The same calculation shows that for $0\le k\le r$, the $k$th Fitting 
ideal of $h^1(V^\cdot)$ is the $(r-k)$th power of the hyperelliptic ideal 
sheaf.  These are the Fitting ideals of a vector bundle of rank $r$ on the 
hyperelliptic substack, suggesting that $h^1(V^\cdot)$ is such a vector 
bundle, or equivalently (since the fibers are constant) that 
$h^1(V^\cdot)$ is scheme-theoretically supported on the hyperelliptic 
substack.  For $d>2$, this is clear from the structure of $\mu$:
$$(x_1,\dots,x_m)\coker(\mu)
\subset
(x_1,\dots,x_m){\frak m}\coker(\mu)
\subset
\bigcap_{n\ge 0} (x_1,\dots,x_m){\frak m}^n\coker(\mu)
=
0,$$
but this fails for $d=2$, $g\ge 4$, when the annihilator of $\coker(\mu)$ is 
actually contained in ${\frak m}^2$.
\end{rmk}

\begin{cor}\label{regular-map-cor} 
Let $\Bl_{\Hyp_g}\MM_g\to \MM_g$ denote the blow-up of the hyperelliptic locus. Then there is a closed embedding
$$\Bl_{\Hyp_g}\MM_g\hra \HH_g$$
over $\MM_g$, extending the regular map $\si:\MM_g\setminus \Hyp_g\to \HH_g$ corresponding to the image of the canonical 
embedding.
\end{cor}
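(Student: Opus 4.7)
The plan is to obtain this corollary as a direct consequence of Propositions \ref{Fitting-prop} and \ref{Fitt-power-ideal-prop} applied to the multiplication map $\mu^d$ for a suitably chosen $d$. First, fix $d\ge 2$ large enough that the Hilbert scheme $\HH_g$ of canonical curves embeds as a closed subscheme of the relative Grassmannian $G_k(S^dp_*\om_{\CC_g/\MM_g})$ over $\MM_g$, where $k$ denotes the dimension of $\ker(\mu^d_C)$ for non-hyperelliptic $C$. On $\MM_g\setminus\Hyp_g$ the map $\mu^d$ is surjective, and its kernel bundle defines the regular section $\si:\MM_g\setminus\Hyp_g\to \HH_g\subset G_k$ corresponding to the canonical embedding.

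Next, apply Proposition \ref{Fitting-prop} to $\mu^d$: the blow-up of $\MM_g$ at $\Fitt_0(\coker(\mu^d))$ extends $\si$ to a closed embedding into $G_k(S^dp_*\om_{\CC_g/\MM_g})$. By Proposition \ref{Fitt-power-ideal-prop}, this Fitting ideal equals $\II_{\Hyp_g}^r$ with $r=(d-1)(g-1)-1$. Since the blow-up of a scheme at an ideal $I$ is canonically isomorphic to the blow-up at any positive power $I^r$ (the two Rees algebras share a common Veronese subalgebra), one gets an identification $\Bl_{\II_{\Hyp_g}^r}\MM_g \simeq \Bl_{\Hyp_g}\MM_g$, yielding a closed embedding $\Bl_{\Hyp_g}\MM_g \hookrightarrow G_k(S^dp_*\om_{\CC_g/\MM_g})$.

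To conclude, observe that this closed embedding factors through the closed subscheme $\HH_g$: the image of the dense open $\MM_g\setminus\Hyp_g$ lies in $\HH_g$ by construction of $\si$, and since $\MM_g$ is integral the blow-up is also integral (in particular reduced), so the scheme-theoretic image in $G_k$ is contained in $\HH_g$. This gives the desired closed embedding $\Bl_{\Hyp_g}\MM_g\hookrightarrow \HH_g$, which is visibly a morphism over $\MM_g$ since all constructions are compatible with the projection to $\MM_g$. The main substantive obstacle in the overall argument would have been identifying the Fitting ideal of $\coker(\mu^d)$ with a power of $\II_{\Hyp_g}$, but this is exactly the content of Proposition \ref{Fitt-power-ideal-prop}; the passage from $\II_{\Hyp_g}^r$ to $\II_{\Hyp_g}$ at the level of blow-ups is formal, and the existence of $d$ for which $\HH_g$ sits closed in the Grassmannian follows from standard properties of Hilbert schemes.
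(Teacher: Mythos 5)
Your proof is correct and follows essentially the same route as the paper's: embed $\HH_g$ into the relative Grassmannian for large $d$, apply Propositions \ref{Fitting-prop} and \ref{Fitt-power-ideal-prop} to identify the closure of the section $\si$ with $\Bl_{\Hyp_g}\MM_g$, and use integrality of the blow-up together with closedness of $\HH_g$ in the Grassmannian to conclude. Your explicit remark that blowing up $\II_{\Hyp_g}^r$ agrees with blowing up $\II_{\Hyp_g}$ (via the Veronese of the Rees algebra) is a point the paper leaves implicit, but the argument is otherwise identical.
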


\Pf . For sufficiently large $d$, we have a closed embedding of the relative Hilbert scheme into the relative Grassmannian
$$\HH_g\sub G_k(S^d p_*(\om_{\CC_g/\MM_g})).$$
Viewing $\si$ as a section of the relative Grassmannian, and applying Proposition \ref{Fitting-prop}, we
obtain that the closure of the image of $\si$ in the Grassmannian is equal to the blow-up of $\MM_g$
at the $0$th Fitting ideal of $\coker(\mu^d)$. By Proposition \ref{Fitt-power-ideal-prop}, it coincides
with the blow-up of the hyperelliptic locus $\Hyp_g\sub\MM_g$. Finally, since $\HH_g$ is closed in
the Grassmannian, we see that the closure of the image of $\si$ is contained in it (note that our blow-up is integral, hence, reduced).
\ed

\subsection{Embedding of hyperelliptic curves and ribbons into a weighted projective space}

Let us denote by $\RR_0$ the hyperelliptic ribbon (see Example \ref{hyperell-ribbon-ex}).

\begin{lem}
Let $C$ be a Gorenstein curve of genus g which is either a ribbon or
hyperelliptic (i.e., a possibly singular projective curve with a flat degree $2$ morphism to $\P^1$).  Then the moduli
stack of curves is unobstructed at $C$.
\end{lem}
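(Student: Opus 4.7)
The plan is to show that the obstruction space $\Ext^2_C(L_C,\OO_C)$ vanishes: I will first realize $C$ as a local complete intersection, then reduce $\Ext^2$ via the hypercohomology spectral sequence to an $H^1$ of a coherent sheaf on $C$, and finally verify the vanishing in each of the two cases.

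First I would observe that $C$ is l.c.i. in both cases. A Gorenstein curve with flat degree $2$ morphism to $\P^1$ embeds, via the canonical decomposition $\pi_*\OO_C \simeq \OO_{\P^1}\oplus\LL$ with $\LL = \OO_{\P^1}(-g-1)$, as a Cartier divisor $V(y^2 - f)$ in the smooth surface $X = \tot(\LL^{-1})$ for some $f \in H^0(\P^1, \LL^{-2})$; and every ribbon is locally $V(y^2)\subset \A^2$. Hence $L_C$ is perfect of amplitude $[-1,0]$, and since $\dim C = 1$ the hypercohomology spectral sequence for $R\Hom(L_C,\OO_C)$ collapses to give
$$\Ext^2_C(L_C,\OO_C) \;\simeq\; H^1\bigl(C,\TT^1_C\bigr), \qquad \TT^1_C := \Eext^1(L_C,\OO_C),$$
where for a local Cartier embedding $C\subset X$ in a smooth surface, $\TT^1_C \simeq \coker(T_X|_C \to N_{C/X})$.

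Next I would treat the two cases. If $C$ is reduced with flat degree $2$ to $\P^1$ (i.e., $f\not\equiv 0$), then $\TT^1_C$ is supported at the finitely many multiple roots of $f$ and hence is a skyscraper, so $H^1(C,\TT^1_C) = 0$. For the split ribbon $\RR_0 = V(y^2)\subset X = \tot(\LL^{-1})$, the local cokernel computation combined with the short exact sequence $0\to\LL\to\OO_{\RR_0}\to\OO_{\P^1}\to 0$ twisted by the line bundle $N_{\RR_0/X}$ identifies
$$\TT^1_{\RR_0} \;\simeq\; N_{\RR_0/X}\big|_{\P^1} \;\simeq\; \LL^{-2} \;\simeq\; \OO_{\P^1}(2g+2),$$
whose $H^1$ vanishes. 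For a general (possibly non-split) ribbon $\RR$, I would invoke Lemma \ref{hyper-ribbon-deform-lem} to obtain a flat family $\wt{\RR}\to\A^1$ with special fibre $\RR_0$ and $\RR$ as a nearby fibre; the relative $\TT^1$ in this family is a line bundle on the constant base $\P^1\times\A^1$ of constant degree $2g+2$ (using $\Pic(\P^1\times\A^1) = \Z$), whence $\TT^1_\RR \simeq \OO_{\P^1}(2g+2)$ and $H^1 = 0$.

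The principal obstacle is the general ribbon case, since a non-split ribbon need not embed globally as a Cartier divisor in a smooth surface; circumventing this via the flat deformation to $\RR_0$ and the constancy of the degree of the line bundle $\TT^1$ in the family is the key technical point that transports the explicit split-ribbon calculation to all ribbons.
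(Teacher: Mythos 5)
Your reduction is set up correctly: both classes of curves are l.c.i., so $L_C$ has perfect amplitude $[-1,0]$, the local-to-global spectral sequence degenerates, and $\Ext^2_C(L_C,\OO_C)\simeq H^1(C,\TT^1_C)$ with $\TT^1_C=\Eext^1(L_C,\OO_C)$; your reduced case is also fine in any characteristic (a reduced curve has finite non-smooth locus, so $\TT^1_C$ is a skyscraper). The genuine gap is characteristic $2$, which the lemma does not exclude and which the paper's own proof covers. Concretely, for $\RR_0=V(y^2)\subset X=\tot(\LL^{-1})$ the map $T_X|_{\RR_0}\to N_{\RR_0/X}$ sends $v\mapsto v(y^2)=2y\,v(y)$, so in characteristic $2$ it is identically zero and $\TT^1_{\RR_0}$ is the line bundle $N_{\RR_0/X}\cong\pi^*\OO_{\P^1}(2g+2)$ on \emph{all} of $\RR_0$, not its restriction to $\P^1$. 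This breaks your split-ribbon identification, and, more seriously, your key step for a general ribbon: in characteristic $2$ the relative $\TT^1$ of $\wt{\RR}\to\A^1$ is a line bundle on the whole family $\wt{\RR}$, not a sheaf on $\P^1\times\A^1$, so the $\Pic(\P^1\times\A^1)=\Z$ argument has nothing to act on. (The vanishing itself survives: $H^1(\RR_0,\pi^*\OO_{\P^1}(2g+2))=H^1(\P^1,\OO(2g+2)\oplus\OO(g+1))=0$; but globalizing to an arbitrary ribbon then needs a different mechanism, e.g.\ semicontinuity of $h^1$ along the fibers of the relative $\TT^1$, which is flat over $\A^1$ precisely because it is a line bundle on $\wt{\RR}$.) A minor instance of the same issue: in characteristic $2$ a flat double cover has equation $y^2-\alpha y-\beta$, not $y^2-f$, though your reduced-case argument only uses reducedness and so is unaffected.

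Away from characteristic $2$ your argument is correct and takes a genuinely different route from the paper's. The paper never localizes $\Ext^2$: it reduces both cases simultaneously to $\RR_0$ by semicontinuity of $\dim\Ext^p(L_{C/k},\OO_C)$ along isotrivial degenerations (Lemma \ref{hyper-ribbon-deform-lem} for ribbons), then uses the transitivity triangle for the divisor $\RR_0\subset F_{g+1}$ to reduce everything to $H^{p>1}(T_{F_{g+1}}|_{\RR_0})=0$ and $H^{p>0}(\OO_{\RR_0}(\RR_0))=0$, the latter by pushing $\pi^*\OO_{\P^1}(2g+2)$ down to $\P^1$; since it never needs to know where $\TT^1$ is supported, it is characteristic-free. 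Your route, where it applies, is more precise (an explicit computation of $\TT^1$, and the hyperelliptic case handled with no degeneration at all), but even in odd characteristic you should justify the assertion that the relative $\TT^1$ is a line bundle on $\P^1\times\A^1$ \emph{scheme-theoretically}: cokernels commute with base change, so the relative Jacobian ideal restricts on every fiber to the ideal of the reduced $\P^1$, and a Nakayama argument then forces it to equal the ideal of $\P^1\times\A^1$; only after that does the $\Pic(\P^1\times\A^1)=\Z$ degree-constancy argument apply.
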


\Pf .
We need to prove $\Ext^p_C(L_{C/k},\sO_C)=0$ for $p>1$.  In either 
case, there is an isotrivial family with generic fiber isomorphic to $C$ 
and the special fiber $\RR_0$ (for ribbons such a family is constructed in Lemma \ref{hyper-ribbon-deform-lem}).
Thus, by semicontinuity, it 
suffices to prove vanishing for $\RR_0$.
Note that $\RR_0$ is a divisor in the total space of the line bundle $\OO(-g-1)$ over $\P^1$, so we can view $\RR_0$ as a
divisor in the Hirzebruch surface $F_{g+1}$. Thus, we have a distinguished triangle
$$R\Hom_{\RR_0}(L_{{\RR_0}/F_{g+1}},\sO_{\RR_0})
\to
R\Hom_{\RR_0}(L_{\RR_0/k},\sO_{\RR_0})
\to
R\Hom_{F_{g+1}}(L_{F_{g+1}/k},\sO_{\RR_0})
\to\ldots$$
or equivalently
$$R\Hom_{\RR_0}(L_{\RR_0/k},\sO_{\RR_0})
\to
R\Gamma(T_{F_{g+1}/k}|_{\RR_0})
\to
R\Gamma(\sO_{\RR_0}(\RR_0))
\to\ldots$$
It thus suffices to show that
$$H^p(T_{F_{g+1}}|_{\RR_0})=0 \ \text{ for } p>1$$
and
$$H^p(\sO_{\RR_0}(\RR_0))=0 \text{ for } p>0.$$
The first follows immediately from the fact that $T_{F_{g+1}/k}$ is a vector bundle
and $\RR_0$ is $1$-dimensional, while the second follows by observing that
$$\sO_{\RR_0}(\RR_0)\cong \pi^*\sO_{\P^1}(2g+2),$$
where $\pi:\RR_0\to \P^1$ is the double cover,
so $\pi_*(\sO_{\RR_0}(\RR_0))\cong \OO_{\P^1}(2g+2)\oplus \OO_{\P^1}(g+1)$.
\ed

Let $X_g$ denote the weighted projective space with $g$ homogeneous coordinates of degree $1$, $u_0,\ldots,u_{g-1}$, 
and $g-2$ coordinates of degree $2$, $v_0,\ldots,v_{g-3}$.  

\begin{lem}\label{wt-proj-emb-lem} 
Let $C$ be a projective curve of genus $g\ge 3$, which is either a ribbon or a smooth curve.
Then the codimension of the multiplication map $\mu^2_C$ is at most $g-2$, so we can choose
$g-2$ elements $v_0,\ldots,v_{g-3}$ of $H^0(C,\omega_C^{\ot 2})$ spanning $\coker(\mu^2_C)$.
Let $u_0,\ldots,u_{g-1}$ be a basis of $H^0(C,\om_C)$. Then $(u_0,\ldots,u_{g-1},v_0,\ldots,v_{g-3})$
give a closed embedding of $C$ into $X_g$.
\end{lem}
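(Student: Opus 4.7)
I plan to split the argument into a dimension count on $\coker\mu^2_C$ and a direct verification of the embedding property.

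\emph{Step 1 (dimension bound).} I would treat the four cases separately. In the non-hyperelliptic cases, $\mu^2_C$ is in fact surjective: for smooth non-hyperelliptic $C$ this is Max Noether's theorem; for a canonical ribbon $\RR$, the surjectivity of the restriction $H^0(\P^{g-1},\OO(2))\to H^0(\RR,\omega_\RR^{\otimes 2})$ is part of Lemma \ref{ribbon-coh-lem}, and under the canonical embedding this restriction is exactly $\mu^2_\RR$. In the hyperelliptic cases---smooth curve or split ribbon $\RR_0$---the curve admits a flat degree-two morphism $\pi:C\to\P^1$ and one has $\omega_C\simeq\pi^*\OO_{\P^1}(g-1)$ (in the ribbon case, via the splitting $\OO_{\RR_0}=\OO_{\P^1}\oplus\OO_{\P^1}(-g-1)$). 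The natural $\Z/2$-action on $C$ (the hyperelliptic involution in the smooth case, $\epsilon\mapsto-\epsilon$ on the square-zero ideal in the ribbon case) decomposes $\pi_*\omega_C^{\otimes 2}\simeq\OO_{\P^1}(2g-2)\oplus\OO_{\P^1}(g-3)$ into invariant and anti-invariant summands. Since $H^0(\omega_C)$ is entirely invariant, the image of $\mu^2_C$ lies in, and surjects onto, the invariant summand $H^0(\P^1,\OO(2g-2))$. Thus $\coker\mu^2_C\simeq H^0(\P^1,\OO(g-3))$ has dimension $g-2$.

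\emph{Step 2 (closed embedding).} Since $\omega_C$ is ample in all cases under consideration, we have $C\simeq\Proj R_C$, where $R_C=\bigoplus_{n\ge 0}H^0(\omega_C^{\otimes n})$. The chosen sections $u_\bullet, v_\bullet$ define a graded-ring homomorphism $\phi:\k[u_\bullet,v_\bullet]\to R_C$ (with the $u_i$ in degree $1$ and $v_j$ in degree $2$), and the induced map $(u_\bullet,v_\bullet):C\to X_g$ is a closed embedding if and only if $\phi$ is surjective. Surjectivity in degrees $\le 2$ is tautological from the construction of the $v_j$. In the non-hyperelliptic cases, $R_C$ is generated in degree one (by Max Noether for smooth curves, and by Lemma \ref{ribbon-coh-lem} applied in every degree $d\ge 2$ for canonical ribbons), hence $\phi$ is surjective in every degree. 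In the hyperelliptic cases I would decompose each $H^0(\omega_C^{\otimes n})$ into $\Z/2$-eigenspaces $H^0(\P^1,\OO(n(g-1)))\oplus H^0(\P^1,\OO((n-1)(g-1)-2))$. The invariant summand is covered by $n$-fold products of the $u_i$'s via the surjectivity of $S^n H^0(\OO(g-1))\to H^0(\OO(n(g-1)))$ on $\P^1$, and for $n\ge 3$ the anti-invariant summand is covered by products of one $v_j$ with an $(n-2)$-fold product of $u_i$'s via the surjectivity of $H^0(\OO((n-2)(g-1)))\ot H^0(\OO(g-3))\to H^0(\OO((n-1)(g-1)-2))$ on $\P^1$.

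\emph{Main obstacle.} The most delicate point is the uniform handling of the split ribbon $\RR_0$ alongside the smooth hyperelliptic case, since the $\Z/2$-action on the nonreduced scheme $\RR_0$ arises from the formal involution $\epsilon\mapsto-\epsilon$ on the square-zero ideal rather than from a geometric involution. Once $\omega_{\RR_0}$ is identified with $\pi^*\OO_{\P^1}(g-1)$ and the decomposition of $\pi_*\omega_{\RR_0}^{\otimes n}$ is shown to match that for smooth hyperelliptic curves, the remainder of the argument reduces in both cases to standard surjectivity statements for multiplication of sections of positive-degree line bundles on $\P^1$.
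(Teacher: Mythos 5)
Your argument is essentially correct away from characteristic $2$, and it takes a genuinely different route from the paper's. The paper handles the two hyperelliptic cases by separate geometric arguments: for a smooth hyperelliptic curve it invokes very ampleness of $\om_C^{\ot 2}$ via the degree count $\deg(\om_C^{\ot 2}(-p_1-p_2))=4g-6>2g-2$, and for the split ribbon $\RR_0$ it exhibits the embedding through the Hirzebruch surface $F_{g+1}\sub X_g$, writing down the explicit equations \eqref{hyperell-ribbon-equations}. You instead give a uniform algebraic argument: identify $C$ with $\Proj$ of its canonical ring $R_C$ and check surjectivity of $\phi:\k[u_\bullet,v_\bullet]\to R_C$ degree by degree, using the splitting of $\pi_*\om_C^{\ot n}$ and surjectivity of multiplication maps on $\P^1$. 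Your version has two advantages: it treats the smooth hyperelliptic curve and $\RR_0$ on the same footing, and it actually proves the codimension-$(g-2)$ assertion of the lemma, which the paper's proof leaves implicit. What it gives up is the explicit equations \eqref{hyperell-ribbon-equations}, which the paper's proof produces as a byproduct and then reuses (Lemma \ref{Koszul-lem}, Lemma \ref{ribbon-eq-syz-lem}); with your proof these would still have to be derived separately. (One small imprecision: a closed embedding is equivalent only to surjectivity of $\phi$ in large degrees, not in all degrees; but you need and prove only the "if" direction, so this is harmless.)

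There are, however, two problems with your eigenspace bookkeeping. First, and more seriously: the lemma carries no hypothesis on the characteristic (the paper flags $\cha\neq 2$ explicitly where it is needed, e.g.\ in Lemma \ref{ribbon-eq-syz-lem}(iii)), but the decomposition into invariant and anti-invariant summands requires $2$ to be invertible, and for $\RR_0$ the involution $\eps\mapsto-\eps$ is actually trivial in characteristic $2$; so as written, Steps 1 and 2 fail there. The fix is to replace the eigenspace decomposition by the canonical filtration: since $h^0(C,\om_C)=g=h^0(\P^1,\OO(g-1))$, one has $H^0(C,\om_C)=\pi^*H^0(\P^1,\OO(g-1))$, so all products of the $u_i$ are pullbacks and land in the subspace $H^0(\P^1,\OO(n(g-1)))\sub H^0(C,\om_C^{\ot n})$ coming from the canonical subsheaf $\OO_{\P^1}\sub\pi_*\OO_C$ via the projection formula; the quotient is $H^0(\P^1,\OO(n(g-1)-g-1))$ (using $H^1(\P^1,\OO(n(g-1)))=0$), it is a module over the subring of pullbacks, and the same surjectivity statements on $\P^1$ finish both steps. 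This is characteristic-free. Second, a sign slip: the hyperelliptic involution acts on $H^0(C,\om_C)$ by $-1$, not $+1$ (e.g.\ $x^i\,dx/y\mapsto -x^i\,dx/y$), and likewise for $\RR_0$, so the eigenvalue labels on all your summands are systematically swapped. This does not affect the conclusions---a product of two sections lying in a single eigenspace is invariant either way, so the image of $\mu^2_C$ still lies in and surjects onto $H^0(\P^1,\OO(2g-2))$---but the stated justification ("$H^0(\omega_C)$ is entirely invariant") is wrong as it stands.
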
 
  
\Pf . If $C$ is smooth nonhyperelliptic or a nonhyperelliptic ribbon, 
then already $u_0,\ldots,u_{g-1}$ give the usual canonical embedding of $C$.
 
Assume that $C$ is smooth hyperelliptic of genus $\ge 3$. 
Then we just need to see that sections of $\om_C^2$ separate points and tangent vectors, but this follows from
the fact that $\deg(\om_C^2(-p_1-p_2))=4g-6>2g-2$.

Now assume that $C=\RR_0$, the hyperelliptic ribbon. Then the desired embedding is the composition of the natural embedding
of $C$ into the Hirzebruch surface $F_{g+1}$ with the embedding $F_{g+1}\sub X_g$. More explicitly, $\RR_0$ is
given in $X_g$ by the homogeneous equations
\begin{equation}\label{hyperell-ribbon-equations}
\begin{array}{l}
u_iu_j=u_ku_l, \ \ u_iv_j=u_kv_l, \text{ for } i+j=k+l,\\
v_iv_j=0, \text{ for all } i,j.
\end{array}
\end{equation}
where the first two sets of equations cut out $F_{g+1}$.
\ed  

\begin{lem}\label{Koszul-lem}
The homogeneous coordinate algebra $A_g$ of $\RR_0$ in $X_g$
is given by the quadratic relations \eqref{hyperell-ribbon-equations}. This algebra 
is Koszul with respect to the grading $\deg(u_i)=\deg(v_j)=1$. In particular, the syzygies between 
relations \eqref{hyperell-ribbon-equations}, denoted as $(uu)_0$, $(uv)_0$ and $(vv)_0$,
are generated by linear syzygies of the schematic form
\begin{equation} \label{hyperell-ribbon-syzygies-eq}
\begin{array}{l}
u(uu)_0, \\ 
v(uu)_0+u(uv)_0, \\
v(uv)_0+u(vv)_0, \\
v(vv)_0,
\end{array}
\end{equation}
(where $u\cdot (uu)_0$ denote linear combinations with coefficients linear 
in $u$ of the relations of the type $(uu)_0$, etc.).
\end{lem}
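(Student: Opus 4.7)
The plan is to prove Lemma~\ref{Koszul-lem} in three stages: identify the defining ideal, verify Koszulness via a quadratic Gröbner basis, and read off the linear syzygies from the same Gröbner data.

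First I would confirm that the relations~\eqref{hyperell-ribbon-equations} generate the full homogeneous ideal of $\RR_0\subset X_g$. The two families $u_iu_j=u_ku_l$ and $u_iv_j=u_kv_l$ (with $i+j=k+l$) cut out the rational scroll $F_{g+1}\subset X_g$, and the family $v_iv_j=0$ then cuts out, inside $F_{g+1}$, the double thickening of the ``$v=0$'' section, which by construction is $\RR_0$. That these relations generate the full ideal follows from a Hilbert-function count with respect to the grading $\deg u_i=\deg v_j=1$: the quotient decomposes as a square-zero extension $B\oplus M$, where $B=k[u_0,\dots,u_{g-1}]/(u_iu_j-u_ku_l)$ is the homogeneous coordinate ring of the rational normal curve and $M$ is the $B$-module generated by $v_0,\dots,v_{g-3}$ with relations $u_iv_j=u_kv_l$ and $M^2=0$; the resulting graded dimensions match the expected ones for $A_g$ in each degree.

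For Koszulness I would exhibit a quadratic Gröbner basis. Order the variables $u_0<\dots<u_{g-1}<v_0<\dots<v_{g-3}$ and use degree-lex. The rewriting rules become $u_iu_j\to u_{i+1}u_{j-1}$ (for $i+1<j$), $u_iv_j\to u_{i+1}v_{j-1}$ (for $j\ge 1$), and $v_iv_j\to 0$. The key observation is that any monomial in $A_g$ has the shape $u_{a_1}\cdots u_{a_s}\cdot v_b^{\delta}$ with $\delta\in\{0,1\}$, and modulo the relations is completely determined by the data $(s,\delta,\sum a_i+\delta b)$. Consequently every S-pair between two rewriting rules reduces to the difference of two monomials with identical $(s,\delta,\sum a_i+\delta b)$, hence to zero. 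Priddy's theorem then yields Koszulness.

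Finally, Koszulness implies the first syzygy module of the quadratic relations is generated in linear degree. Using the natural bi-grading where $u_i$ has weight $(1,0)$ and $v_j$ has weight $(0,1)$, the three types of relations sit in bi-degrees $(2,0),(1,1),(0,2)$, so linear syzygies split according to total bi-degree $3$ into exactly the four schematic types appearing in~\eqref{hyperell-ribbon-syzygies-eq}. The dimensions of these bi-graded pieces can be computed against the Hilbert series of the Koszul dual, and one verifies that they are exhausted by the obvious linear syzygies arising, via Schreyer's theorem, from the S-pair reductions above.

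\textbf{Main obstacle.} The central technical point is the S-pair verification for the Gröbner basis, particularly for mixed overlaps such as $u_iu_j\cdot v_k$ vs.\ $u_i\cdot v_jv_k$. In principle this is routine, but the interaction of three different rewriting rules requires some bookkeeping; once the ``monomials classified by shape and subscript sum'' principle is in hand, however, confluence is immediate, and both Koszulness and the description of linear syzygies follow by standard machinery.
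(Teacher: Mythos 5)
Your skeleton is the paper's skeleton: the same quadratic relations, the same monomial order $u_0<\cdots<u_{g-1}<v_0<\cdots<v_{g-3}$, Koszulness via the quadratic-Gr\"obner-basis (PBW/Priddy) criterion, and the schematic form of the linear syzygies read off from the bigrading by $u$-degree and $v$-degree (your bidegree decomposition $(3,0),(2,1),(1,2),(0,3)$ is exactly the clean way to justify what the paper dismisses as ``easy to check''). The one methodological difference is how the Gr\"obner property is certified. The paper never reduces an S-pair: it identifies $A_g$ with the canonical ring $\bigoplus_n H^0(\RR_0,\om_{\RR_0}^{\ot n})$ using $\OO_{\RR_0}=\OO_{\P^1}\oplus\OO_{\P^1}(-g-1)$, computes its Hilbert series, counts the normal monomials of the rewriting system, and observes that the counts agree; since normal monomials always span the quotient, this single count proves \emph{simultaneously} that the relations generate the whole ideal and that they form a Gr\"obner basis. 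You split these tasks: the $B\oplus M$ square-zero decomposition for ideal generation (essentially the paper's computation in different clothing) plus Buchberger's criterion for confluence.

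Two steps of your plan need repair. First, your confluence argument is circular as written: you assert that monomials with equal invariant $(s,\delta,\sum_i a_i+\delta b)$ are \emph{congruent modulo the relations}, and conclude that an S-pair, being a difference of two such monomials, ``hence'' reduces to zero. Congruence modulo the ideal does not imply reduction to zero under the rewriting system; that implication is precisely the Gr\"obner property being proved. What the invariant argument actually needs is a statement about \emph{normal forms}: rewriting preserves the invariant, every monomial containing two $v$'s reduces to $0$, and the normal monomials with at most one $v$, namely $u_a^pu_{a+1}^q$, $u_a^pu_{a+1}^qv_0$ and $u_{g-1}^sv_b$, are pairwise distinguished by the invariant; confluence then follows. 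This is an easy patch (or simply adopt the paper's count). Second, and this defect is shared with the paper's own proof, the inference ``Koszul $\Rightarrow$ first syzygies generated in linear degree'' is false in general: $k[x,y]/(x^2,y^2)$ is Koszul and even G-quadratic, yet its only syzygy is the Koszul syzygy in degree $4$; Koszulness only forces $\Tor_2^S(A_g,k)$ to vanish above degree $4$. Moreover your fallback via Schreyer's theorem cannot close this hole: Schreyer's generators include degree-$4$ syzygies from coprime pairs of leading terms, and for the pairs $u_iu_j$, $v_0^2$ there is no intermediate leading term dividing the product, so the \emph{initial} ideal genuinely has minimal degree-$4$ syzygies in the multidegrees $u_iu_jv_0^2$. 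The linearity of the syzygies of the actual ideal is a cancellation phenomenon that must use the binomial tails (e.g.\ the Koszul syzygy between $u_0u_2-u_1^2$ and $v_0^2$ is a combination of three of the listed linear syzygies, but checking this uses the $(uv)_0$ relations, not just their leading terms). A correct route is structural: from $0\to M\to A_g\to B\to 0$, with $B$ the coordinate ring of the rational normal curve and $M$ killed by the $v$'s, the Eagon--Northcott-type linear resolutions of $B$ and of $M$ over $k[u]$ force $\Tor_2^S(A_g,k)$ into degree $3$. In other words, your $B\oplus M$ decomposition is the germ of the right argument for the syzygy claim; you just deploy it only for Hilbert functions, where it is not needed, rather than for $\Tor$, where it is.
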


\Pf . We have 
$$\OO_{\RR_0}=\OO_{\P^1}\oplus \OO_{\P^1}(-g-1), \ \ \om_{\RR_0}^{\ot
  n}=\OO_{\P^1}(n(g-1))\oplus\OO_{\P^1}(n(g-1)-g-1),$$ and the embedding
into $X_g$ corresponds to the standard monomial basis basis $(u_i)$ of
$H^0(\RR_0,\om_{\RR_0})\simeq H^0(\P^1,\OO_{\P^1}(g-1))$, and the standard
basis $(v_j)$ of $H^0(\P^1,\OO_{\P^1}(g-3))\sub
H^0(\RR_0,\om_{\RR_0})$. This easily implies that the polynomial algebra
$k[u,v]$ surjects onto the canonical ring of $\RR_0$, $\bigoplus_{n\ge
  0}H^0(\RR_0,\om_{\RR_0}^{\ot n})$, so the algebra $A_g$ can be identified
with this canonical ring, which has Hilbert series $1+gt+(g-2)\sum_{n\ge
  2}(2n-1)t^n$.

Let $\wt{A}_g$ be the quadratic algebra defined by the relations \eqref{hyperell-ribbon-equations}. 
We claim that these relations form a quadratic Gr\"obner basis with
respect to the order $u_0<\ldots<u_{g-1}<v_0<\ldots<v_{g-3}$, and that the natural surjective homomorphism
$$\wt{A}_g\to A_g$$ is an isomorphism. Indeed, the normal quadratic
monomials have form $u_iu_{i+1}$ and $u_iv_0$.  This gives a set of normal
monomials matching the Hilbert series of $A_g$ (where we use the grading
$\deg(u_i)=1$, $\deg(v_j)=2$). Hence, the normal monomials project to a
basis of $A_g$. It follows that they form a basis in $\wt{A}_g$ and that
$\wt{A}_g\simeq A_g$.  By a standard criterion, this implies that the
algebra $A_g$ is Koszul.

It follows the module of syzygies is generated by linear syzygies. 
It is easy to check that the linear syzygies have the required form.
\ed
  
\begin{definition} 
Let us say that a subscheme of $X_g$ is a {\it ribbon in canonical form} if
it is given by homogeneous equations of the form
\begin{equation}\label{can-ribbon-equations}
\begin{array}{l}
u_iu_j-u_ku_l=\ell_{i,j,k}(v), \text{ for } i+j=k+l,\\ 
u_iv_j=u_kv_l, \text{ for } i+j=k+l,\\
v_iv_j=0, \text{ for all } i,j,
\end{array}
\end{equation}
where $\ell_{i,j,k}(v)$ are some linear forms in $v$,
and if this subscheme has the same Hilbert series as the hyperelliptic ribbon.
\end{definition}

\begin{lem}\label{ribbon-eq-syz-lem} 
(i) Let $C$ be a ribbon of genus $g$.
For appropriate choice of a basis $(u_i)$ of $H^0(C,\om_C)$ and
a choice of $g-2$ elements $v_0,\ldots,v_{g-3}$ of $H^0(C,\omega_C^{\ot 2})$ spanning $\coker(\mu^2_C)$,
the image of $C$ in $X_g$ will be a ribbon in canonical form.
Conversely, every ribbon in canonical form is a ribbon of genus $g$.

\noindent
(ii) Let $(uu)$ be the first group of equations
\eqref{can-ribbon-equations} (with quadratic parts $(uu)_0$).  Then the
module of syzygies between equations \eqref{can-ribbon-equations} is
generated by the syzygies of the form
\begin{align*} 
& u(uu) +(uv)_0, \\
& v(uu)+u(uv)_0+(vv)_0, \\
& v(uv)_0+u(vv)_0, \\
& v(vv)_0.
\end{align*}
Furthermore the linear parts of these syzygies are exactly all the syzygies
between the relations \eqref{hyperell-ribbon-equations}.

\noindent
(iii) Assume the characteristic is $\neq 2$. Then a hyperelliptic curve $C$ of genus $g$ can be embedded
into $X_g$, so that the corresponding subscheme of $X_g$ is given by the following equations deforming
\eqref{hyperell-ribbon-equations}:
\begin{equation}\label{hyperell-equations}
\begin{array}{l}
u_iu_j=u_ku_l, \ \ u_iv_j=u_kv_l, \text{ for } i+j=k+l,\\
v_iv_j=p_{ij}(u), \text{ for all } i,j,
\end{array}
\end{equation}
for some homogeneous polynomials of degree $4$, $p_{ij}(u)$.
Furthermore, all syzygies between these equations have schematic form
\begin{align*} 
& u(uu)_0,\\
& u(uv)_0+v(uu)_0,\\
& u(vv)+v(uv)_0+uuu(uu)_0,\\
& v(vv)+uuu(uv)_0+uuv(uu)_0,
\end{align*}
and the linear parts of these syzygies are exactly all the syzygies between the relations
\eqref{hyperell-ribbon-equations}.
\end{lem}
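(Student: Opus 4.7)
The plan is to deduce all three parts by viewing a ribbon of genus $g$ and a smooth hyperelliptic curve of genus $g$ as two distinct flat deformations of the hyperelliptic ribbon $\RR_0$, and then reading off the shape of the equations and syzygies in $X_g$ from those of $\RR_0$ given in Lemma \ref{Koszul-lem}. For ribbons, such a deformation is provided by Lemma \ref{hyper-ribbon-deform-lem}; for smooth hyperelliptic curves (in characteristic $\neq 2$), it comes from smoothing the split double cover $\pi_*\OO_{\RR_0}=\OO_{\P^1}\oplus \OO_{\P^1}(-g-1)$ to a non-split one with $z^2 = f(x_0,x_1)$, where $f$ is the branch polynomial. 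In both cases the canonical ring has the same (weighted) Hilbert series as $A_g$, so the ideal $I_C\subset k[u,v]$ is a flat deformation of $I_{\RR_0}$, with minimal generators in the same bi-degrees as in \eqref{hyperell-ribbon-equations}, i.e.\ bi-degrees $(2,0)$, $(1,1)$ and $(0,2)$ in $(u,v)$.

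For part (i), I analyze the possible deformations of each group of relations. A deformation of a $(uu)_0$ relation has weight $2$, so modulo $(uu)_0$ it can acquire only a correction $\ell_{i,j,k}(v)$ linear in $v$, producing the first group of \eqref{can-ribbon-equations}. A deformation of a $(uv)_0$ relation has weight $3$ and could a priori acquire a cubic correction in $u$; this correction is absorbed by replacing the lift $v_j$ of the basis of $\coker(\mu^2_C)$ by $v_j + q_j(u)$ with $q_j\in\im(\mu^2_C)$, which does not change the class in $\coker(\mu^2_C)$ and hence the embedding. For the $(vv)_0$ relations, the ribbon condition $\LL^2=0$ for the conormal sheaf $\LL$ of $C_{\red}=\P^1$ in $C$ forces $v_iv_j=0$ identically in the canonical algebra: each $v_j$ is represented by a section in the $\LL$-summand of the natural filtration on $\omega_C^{\otimes 2}$, so $v_iv_j\in \LL^2=0$. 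Conversely, given any ideal of the form \eqref{can-ribbon-equations} with the correct Hilbert series, reducing modulo $(v_0,\ldots,v_{g-3})$ recovers the rational normal curve $C_r$, while the $v_iv_j=0$ and $u_iv_j=u_kv_l$ relations endow the subscheme with a square-zero thickening by a line bundle of the correct degree, identifying it as a canonical ribbon of genus $g$.

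For parts (ii) and (iii), the syzygies are obtained by lifting the Koszul linear syzygies \eqref{hyperell-ribbon-syzygies-eq}: substitute the deformed relations into each of \eqref{hyperell-ribbon-syzygies-eq}; the result lies in $I_C$ since the syzygy held modulo the undeformed part, and expressing it back as a combination of the deformed relations produces the stated correction terms. In (ii), the linear $v$-correction in the $(uu)$ relations generates additional $(uv)_0$ and $(vv)_0$ terms, giving exactly the four schematic forms listed. In (iii), the analysis is the same in a different direction: the $(uu)_0$ and $(uv)_0$ relations remain exact (they hold on the rational normal curve through which the canonical map of a hyperelliptic curve factors), while the $(vv)_0$ relations acquire a weight-$4$ correction $p_{ij}(u)$, coming from $v_iv_j = x_0^{i+j}x_1^{2g-6-i-j}f(x_0,x_1)$ in the algebra of the double cover (which is why we need characteristic $\neq 2$). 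The lifted syzygies now involve the further correction terms $uuu(uu)_0$ and $uuv(uu)_0$, reflecting that the deformation is one degree higher in $u$ than in (ii). In both cases, Koszulness of $A_g$ combined with the preservation of the Hilbert series under the deformation forces the Betti numbers of $I_C$ to be bounded above by those of $I_{\RR_0}$, so the lifted syzygies in fact generate all syzygies and their linear parts exhaust all syzygies of \eqref{hyperell-ribbon-equations}.

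The main obstacle is the precise identification in (i) and (iii) of the form of the corrections, particularly the justification that the $(uv)_0$ relations can be made to hold without modification by a suitable re-choice of the lifts $v_j$. This requires examining the weight-$3$ piece of $I_C$ and the map $H^0(\omega_C)\otimes\coker(\mu^2_C)\to H^0(\omega_C^{\otimes 3})/\im(\mu^3_C)$, and verifying that the freedom in the choice of lift of $\coker(\mu^2_C)$ is large enough to cancel any cubic-in-$u$ correction; I expect this to follow from a dimension count using the explicit description of the canonical algebra of $\RR_0$ from Lemma \ref{Koszul-lem}.
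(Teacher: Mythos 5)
Your overall strategy --- viewing both ribbons and hyperelliptic curves as flat deformations of the hyperelliptic ribbon $\RR_0$ and lifting the equations and the Koszul syzygies of Lemma \ref{Koszul-lem} with weight bookkeeping --- is the same as the paper's. Parts (ii) and (iii) essentially coincide with the paper's proof, with two remarks. First, the paper obtains generation of the lifted syzygies from flatness of the spaces $H^0(\KK_C(m))$, which follows from $H^{>0}(X_g,\II_{\RR_0}(m))=0$, whereas you invoke semicontinuity of Betti numbers; both work, but note that the embedded flat family you need is produced by rescaling the $v$-coordinates of a ribbon already in canonical form (as the paper does), not by Lemma \ref{hyper-ribbon-deform-lem}, which gives an \emph{abstract} family that you would still have to embed compatibly into $X_g\times\A^1$. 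Second, in (iii) your phrase ``one degree higher in $u$'' does not by itself explain why, say, the weight-$3$ syzygy is exactly $u(uu)_0$ with no $(uv)_0$ term (weight considerations alone allow one); the paper pins this down by observing the syzygies can be taken to be eigenvectors of $v\mapsto -v$, though a direct lift also shows it, since in (iii) the $(uu)_0$ and $(uv)_0$ equations are undeformed and so the first two syzygies of \eqref{hyperell-ribbon-syzygies-eq} hold verbatim.

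The genuine gap is the one you flag yourself in (i), and it is both unnecessary and misdiagnosed. You do not need to absorb cubic-in-$u$ corrections to the $(uv)_0$ relations by re-choosing lifts $v_j\mapsto v_j+q_j(u)$: the very choice you already make to get $v_iv_j=0$ --- taking the $v_j$ from the image of $H^0(\P^1,\OO(g-3))=H^0(\LL\cdot\om_C^{\ot 2})\hra H^0(\om_C^{\ot 2})$ --- also forces the $(uv)_0$ relations to hold \emph{exactly}. Indeed, since $\LL^2=0$, the multiplication $\om_C\ot(\LL\cdot\om_C^{\ot 2})\to\LL\cdot\om_C^{\ot 3}$ factors through $(\om_C|_{\P^1})\ot(\LL\cdot\om_C^{\ot 2})$, so $u_iv_j=\bar u_i\cdot v_j=x_0^{i+j}x_1^{2g-4-i-j}$ depends only on $i+j$. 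This is precisely how the paper proceeds; with it, your ``main obstacle'' disappears and no dimension count is needed. Beyond this, two statements in your (i) are asserted rather than proved. In the forward direction you never show that the listed equations cut out exactly $C$ rather than a larger scheme; the paper settles this by noting $C\subseteq C_{eq}$ and comparing Hilbert polynomials ($C_{eq}$ is bounded above termwise by $\RR_0$, whose Hilbert polynomial equals that of $C$). In the converse, saying the relations ``endow the subscheme with a square-zero thickening by a line bundle of the correct degree'' is the conclusion, not an argument: one must show the kernel $\LL$ of $\OO_C\to\OO_{C_0}$ is a line bundle on $\P^1$ isomorphic to $\OO_{\P^1}(-g-1)$ (and not, say, a sheaf with torsion). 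The paper gets this by rescaling the $v_j$ to produce a family degenerating $C$ to $\RR_0$, flat because the Hilbert series is constant (this is where the Hilbert-series clause in the definition of canonical form is used), so that $\LL$ is a flat deformation of $\OO_{\P^1}(-g-1)$ and hence isomorphic to it.
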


\Pf . (i) Let $C$ be a ribbon. We have an exact sequence
$$0\to \OO_{\P^1}(-2)\to \om_C\to \OO_{\P^1}(g-1)\to 0$$
inducing an isomorphism $H^0(C,\om_C)\rTo{\sim}H^0(\P^1,\OO_{\P^1}(g-1))$.
We choose a basis $(u_i)$ of $H^0(C,\om_C)$ corresponding to the standard monomial basis
of $H^0(\P^1,\OO_{\P^1}(g-1))$.
Next, the exact sequence
$$0\to \OO_{\P^1}(g-3)\to \om_C^{\ot 2}\to \OO_{\P^1}(2g-2)\to 0$$
gives an exact sequence
$$0\to H^0(\P^1,\OO_{\P^1}(g-3))\to H^0(\om_C^{\ot 2})\to H^0(\P^1,\OO_{\P^1}(2g-2))\to 0$$
and we choose $(v_j)$ to come from the standard monomial basis of $H^0(\P^1,\OO_{\P^1}(g-3))$.
Then the relations $(uv)_0$ and $(vv)_0$ are satisfied. Furthermore,
$(uu)_0$ is satisfied modulo the ideal generated by $(v_j)$, so we get some equations of
the form \eqref{can-ribbon-equations}. Let $C_{eq}$ be the subscheme defined by these equations.
Then $C\sub C_{eq}$ and the Hilbert polynomial of $C_{eq}$ is bounded above termwise by the Hilbert polynomial
of $\RR_0$, which is equal to that of $C$. Hence, $C=C_{eq}$. 

Conversely, assume $C$ is a subscheme of $X_g$ given by equations \eqref{can-ribbon-equations}.
Then $\II_C$ is sandwiched 
between $\II_{C_0}$ and $\II_{C_0}^2$, where $\II_{C_0}$ is the ideal generated by
$v_0,\dots,v_{g-3}$ and the $(uu)_0$ equations.  This implies (since $C_0$ is smooth) 
that $C_0$ is the singular subscheme of $C$, and that we have an exact sequence
$$0\to \LL\to \OO_C\to \OO_{C_0}\to 0$$
where $\LL$ is scheme-theoretically supported on $C_0\simeq \P^1$.
Now we observe that rescaling the variables $(v_j)$ we get a family over $\A^1$, with the fiber
$C$ over any point of $\A^1\setminus \{0\}$ and with the fiber $\RR_0$ over $0$.
Since the Hilbert series does not change, this is a flat family. Hence, $\LL$ is a flat deformation of
$\OO_{\P^1}(-g-1)$ on $\P^1$, so $\LL\simeq \OO_{\P^1}(-g-1)$, and $C$ is a ribbon.

\noindent
(ii) As in part (i), rescaling the $(v_j)$ variables, we can view a ribbon in canonical form as a flat $1$-parameter deformation $C$
of the hyperelliptic ribbon $\RR_0$. 
The relations \eqref{can-ribbon-equations} correspond to generators of the ideal sheaf $\II_C$ of degrees $2$, $3$ and $4$,
so that we have a surjection of the form
$$R_2\ot \sO(-2)\oplus R_3\ot \sO(-3)\oplus R_4\ot \sO(-4)\to \II_C.$$
on $X_g$. Let  $\KK_C$ denote the kernel of this map.
The sheaves $\KK_C$ form a flat family as we degenerate $C$ into $\RR_0$.
Furthermore, since $H^{>0}(X_g,\II_{\RR_0}(m))=0$ for $m\ge 2$, 
the spaces of syzygies $H^0(\KK_C(m))$ for $m\ge 3$ also form a flat family. 
Thus, the syzygies \eqref{hyperell-ribbon-syzygies-eq} extend to sections
of $\KK_C(m)$, $3\le m\le 6$, generating $\KK_C$. They automatically have the required form,
and their linear parts specialize to \eqref{hyperell-ribbon-syzygies-eq}. 

\noindent
(iii) Since the characteristic is $\neq 2$, $C$ can be embedded as $y^2=h$
in the total space of the line bundle $\OO_{\P^1}(g+1)$ over $\P^1$, which
is an open subset of the Hirzebruch surface $F_{g+1}$ cut out by equations
$(uu)_0$ and $(uv)_0$ in $X_g$. It is easy to see that the equation $y^2=h$
corresponds to the remaining equations \eqref{hyperell-equations} (of the
form $(vv)$) in $X_g$.  The statement about syzygies is proved similarly to
part (ii). Note that the form of the syzygies is as written since they
should be eigenvectors for the operator of negating all $v_i$'s.  \ed

\subsection{Ribbon locus in the Hilbert scheme}

We want to describe the locus of canonically embedded ribbons as a locally closed subscheme
of $H_g(\P^{g-1})$.

First, we need a characterization of rational normal curves in the corresponding Hilbert scheme.
Let $H^{rnc}(\P^d)$ denote the Hilbert scheme of curves in $\P^d$ with the Hilbert polynomial
$P(n)=nd+1$, so that a rational normal curve in $\P^d$ defines a point of $H^{rnc}(\P^d)$.

\begin{lem}
Let $[C]\in H^{rnc}(\P^d)$ be a point such that $C$ is smooth and nondegenerate,
i.e., $H^0(\P^d,\II_C(-1))=0$. Then $C$ is a rational normal curve in $\P^d$.
\end{lem}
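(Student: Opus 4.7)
The plan is to extract $C$ from the numerical data and then use the nondegeneracy hypothesis (which I read as $H^0(\P^d,\II_C(1))=0$, since the literal statement $H^0(\P^d,\II_C(-1))=0$ is vacuous) to force the embedding to be the complete linear system on $\P^1$.

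First, I would read off from the Hilbert polynomial $P(n)=nd+1=dn-g+1$ that $C$ has degree $d$ and arithmetic genus $0$. Since $C$ is assumed smooth, the geometric genus equals the arithmetic genus, so $g(C)=0$ and therefore $C\simeq \P^1$.

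Second, the restriction $L:=\OO_C(1)$ is a line bundle on $\P^1$ whose degree equals the degree of $C\sub \P^d$, namely $d$. Hence $L\simeq \OO_{\P^1}(d)$, and in particular $h^0(C,L)=d+1$.

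Third, I would use the short exact sequence
\[
0\to \II_C(1)\to \OO_{\P^d}(1)\to \OO_C(1)\to 0
\]
to conclude that the restriction map $H^0(\P^d,\OO(1))\to H^0(C,\OO_C(1))$ is injective (by the nondegeneracy hypothesis, which expresses exactly the vanishing of $H^0(\II_C(1))$). Since both source and target have dimension $d+1$, this map is an isomorphism. Thus the embedding $C\hra \P^d$ is induced by the complete linear system $|\OO_{\P^1}(d)|$, which by definition is the rational normal curve of degree $d$.

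There is no real obstacle; the entire argument is a numerical comparison. The only mildly delicate point is interpreting the hypothesis: one needs the statement $H^0(\P^d,\II_C(1))=0$ (nondegeneracy in the usual sense) rather than the literal twist by $\OO(-1)$, since $\OO_{\P^d}(-1)$ has no global sections at all. Under that reading, smoothness and the Hilbert polynomial together with the dimension count $\dim H^0(\P^1,\OO(d))=d+1=\dim H^0(\P^d,\OO(1))$ force the conclusion immediately.
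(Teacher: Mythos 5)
Your reading of the hypothesis is the intended one (the twist by $\OO(-1)$ in the statement must be a typo for $\OO(1)$), and your concluding step is correct: granted $C\simeq\P^1$, injectivity of the restriction map $H^0(\P^d,\OO(1))\to H^0(C,\OO_C(1))$ between two $(d+1)$-dimensional spaces forces the embedding to be given by the complete linear system $|\OO_{\P^1}(d)|$, so $C$ is a rational normal curve; this is in effect a proof of the ``well known'' fact that the paper merely cites at the end of its own argument. The genuine gap is the sentence ``since $C$ is smooth, the geometric genus equals the arithmetic genus, so $C\simeq\P^1$.'' A point of $H^{rnc}(\P^d)$ is an arbitrary closed subscheme with Hilbert polynomial $nd+1$; nothing in the hypotheses says it is connected, and the identity you invoke is a statement about irreducible smooth curves. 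If $C$ is smooth with connected components $C_1,\ldots,C_k$ of genera $g_1,\ldots,g_k$, then the constant term of the Hilbert polynomial gives only $\chi(\OO_C)=k-\sum_i g_i=1$, i.e.\ $\sum_i g_i=k-1$, which for $k\ge 2$ allows components of positive genus.

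Handling the possibly-disconnected case is exactly what the paper's proof spends its effort on: it decomposes $C=C_1\sqcup\ldots\sqcup C_k$ and uses the Hilbert polynomial identity $nd+1=n(d_1+\cdots+d_k)+k$ to force $k=1$, and only then appeals to the classical characterization of irreducible nondegenerate degree-$d$ curves in $\P^d$. Moreover, the disconnected case is not a technicality that can be waved away: a smooth plane cubic $E$ lying in a plane $\Pi\subset\P^4$, together with a line $L$ disjoint from $\Pi$, is a smooth, nondegenerate subscheme with Hilbert polynomial $3n+(n+1)=4n+1$, and it is not a rational normal curve. This example shows that connectedness --- left implicit in your argument, and for that matter also needed to justify the paper's own assertion that every component is a $\P^1$ --- is an essential ingredient rather than a cosmetic one: without it the conclusion itself fails, so your step ``$C\simeq\P^1$'' cannot be repaired by purely numerical reasoning. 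You must explicitly invoke (or add to the statement) the hypothesis that $C$ is connected; once that is done, your argument goes through.
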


\Pf . Since $C$ has arithmetic genus $0$, we should have $C=C_1\sqcup\ldots\sqcup C_k$,
where each $C_i$ is isomorphic to $\P^1$. Let $\deg(\OO_{\P^d}(1)|_{C_i})=d_i$. Then we have
$$P(n)=nd+1=(nd_1+1)+\ldots+(nd_k+1)=n(d_1+\ldots+d_k)+k,$$ 
so $k=1$. Thus, $C$ is irreducible. But it is well known that an irreducible nondegenerate curve of degree $d$
in $\P^d$ is a rational normal curve.
\ed

Thus, the locus $RNC\sub H^{rnc}(\P^d)$ of rational normal curves can be described as an open subset of
smooth and nondegenerate curves.

Now let us consider the nested Hilbert scheme $H^{\nest}(\P^{g-1})$ of pairs $C_0\sub C\sub \P^{g-1}$,
where $C$ is a point of $H_g(\P^{g-1})$ and $C_0$ is a point of $H^{rnc}(\P^{g-1})$.
Let us consider the closed subscheme
$$Z^{\nest}\sub H^{\nest}(\P^{g-1})$$ defined by the condition that
$\II_{C_0}^2\sub \II_C$ (or equivalently, that the composition
$\II_{C_0}^2\to \OO_{\P^{g-1}}\to \OO_C$ vanish).  Let us consider the
natural projection
\begin{equation}\label{nested-Hilb-proj-eq}
\pi:Z^{\nest}\to H_g(\P^{g-1}).
\end{equation}
Let $B\sub Z^{\nest}$ be the closed subset of $(C_0,C)$ such that $C_0$ is not a rational normal curve.


\begin{lem}\label{nested-Hilb-lem}
(i) Let $(C_0,C)$ be a point of $Z^{\nest}$ such that $C_0$ is a rational normal curve. Then $C$ is a canonical ribbon
$\RR_\la$ for some $\la\in \P H^0(C_0,\om_{C_0}(1))^*$.

\noindent
(ii) The natural projection $Z^{\nest}\setminus B\to RNC$ is the
$\P^{g-3}$-bundle associated with the vector bundle with fiber
$H^0(C_0,\om_{C_0}(1))^\vee$ over $C_0$.
\end{lem}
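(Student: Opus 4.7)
From $\II_{C_0}^2 \sub \II_C \sub \II_{C_0}$ (the first being the defining condition of $Z^{\nest}$, the second following from $C_0 \sub C$) I would form the exact sequence
$$0 \to \II_C/\II_{C_0}^2 \to \II_{C_0}/\II_{C_0}^2 \to \OO_C/\OO_{C_0} \to 0.$$
By Lemma \ref{normal-bundle-lem}, the middle term is $\iota_*(\om_{C_0}(-1)^{\oplus g-2})$, so $\OO_C/\OO_{C_0} \simeq \iota_* F$ for a coherent sheaf $F$ on $\P^1 \simeq C_0$ that is a quotient of $\OO_{\P^1}(-g-1)^{\oplus g-2}$. Comparing Hilbert polynomials (canonical HP $(2g-2)m+1-g$ minus rational normal curve HP $(g-1)m+1$) gives HP equal to $(g-1)m-g$, matching $\om_{C_0}(-1) = \OO_{\P^1}(-g-1)$. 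Decomposing $F = \bigoplus \OO_{\P^1}(a_i) \oplus T$ with $T$ torsion of length $\ell$, the HP constraint forces a single line bundle summand with $a_1 + \ell = -g-1$, while the composition $\OO_{\P^1}(-g-1)^{\oplus g-2} \twoheadrightarrow \OO_{\P^1}(a_1)$ requires $\Hom(\OO_{\P^1}(-g-1),\OO_{\P^1}(a_1)) \neq 0$, i.e., $a_1 \ge -g-1$. Together with $\ell \ge 0$ this gives $a_1 = -g-1$, $\ell = 0$, hence $\OO_C/\OO_{C_0} \simeq \om_{C_0}(-1)$ and $C$ is a canonical ribbon. By Lemma \ref{normal-bundle-lem} the classifying surjection $\om_{C_0}(-1)^{\oplus g-2} \twoheadrightarrow \om_{C_0}(-1)$ is determined by a nonzero class $\la \in H^0(\P^1,\OO(g-3))^* \simeq H^0(C_0,\om_{C_0}(1))^*$ modulo scalar.

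\textbf{Plan for (ii).} I would upgrade (i) to families over $RNC$. Let $\pi:\UU \to RNC$ be the universal rational normal curve; the construction in the proof of Lemma \ref{normal-bundle-lem} is canonical and relativizes to an isomorphism $\NN^\vee \simeq \pi^* E \otimes \om_{\UU/RNC}(-1)$ on $\UU$, where $E := \pi_* \om_{\UU/RNC}(1)$ is a rank $g-2$ bundle on $RNC$ with fiber $H^0(C_0,\om_{C_0}(1))$. On $\P(E^\vee) \to RNC$, the tautological quotient $\pi^* E^\vee \twoheadrightarrow \OO_{\P(E^\vee)}(1)$, combined with the identification of $\NN^\vee$, gives a universal surjection from the pullback of $\NN^\vee$ onto a line bundle. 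Its kernel inside $\OO_{\P^{g-1}}/\II_{\UU}^2$ cuts out a universal subscheme $C \supset \UU$ with $\II_{\UU}^2 \sub \II_C$, defining a morphism $\Psi: \P(E^\vee) \to Z^{\nest} \setminus B$. The inverse $Z^{\nest} \setminus B \to \P(E^\vee)$ sends $(C_0,C)$ to the surjection class $\NN^\vee|_{C_0} \twoheadrightarrow \OO_C/\OO_{C_0}$ produced in (i).

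\textbf{Main obstacle.} The delicate point is to verify that this inverse is a morphism of schemes, not just a bijection on $\bar k$-points: one must show that $\OO_C/\OO_{C_0}$ forms a line bundle on the universal $C_0$ over $Z^{\nest} \setminus B$, rather than just fiberwise. I would handle this with a flatness and base-change argument: the Hilbert polynomial of the relative quotient is constant and each fiber is the line bundle $\om_{C_0}(-1)$ by (i), so flatness follows and the relative sheaf is automatically a line bundle of the required form, giving the desired isomorphism of schemes.
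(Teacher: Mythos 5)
Your part (i) coincides with the paper's proof up to notation: the paper works with the exact sequence $0\to \II_{C_0}/\II_C\to \OO_C\to \OO_{C_0}\to 0$ and runs exactly your computation --- the kernel is a quotient of $\II_{C_0}/\II_{C_0}^2\simeq \om_{C_0}(-1)^{\oplus g-2}$ with Hilbert polynomial $n(g-1)-g$, hence of the form $\OO_{\P^1}(a)\oplus\TT$ with $a=-g-1-\ell(\TT)$, and the nonvanishing of $\Hom(\OO_{\P^1}(-g-1),\OO_{\P^1}(a))$ forces $\TT=0$, $a=-g-1$. (Your third term, written $\OO_C/\OO_{C_0}$, is literally $\II_{C_0}/\II_C=\ker(\OO_C\to\OO_{C_0})$; same sheaf, same argument.)

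For part (ii) you take a genuinely different route. The paper identifies $Z^{\nest}\setminus B$ with the relative Quot scheme of quotients of $\II_{C_0}/\II_{C_0}^2$ with Hilbert polynomial $n(g-1)-g$, notes that the natural map from the projective bundle to this Quot scheme is bijective on geometric points, and concludes by proving that the Quot scheme is smooth over $RNC$ via a tangent-space computation: for a surjective $\la:L^{\oplus g-2}\to L$ (with $L=\om_{C_0}(-1)$) one has $\ker(\la)\simeq L^{\oplus g-3}$, so $\Hom(\ker(\la),L)$ is $(g-3)$-dimensional. You instead construct the two morphisms directly --- the universal ribbon over $\P(E^\vee)$ in one direction, and descent of the quotient $\II_{C_0}/\II_C$ by cohomology and base change in the other --- which makes the identification functorial and avoids both the Quot formalism and the step (left implicit in the paper) of upgrading a bijective morphism onto a smooth target to an isomorphism. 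This is a reasonable trade. The one real flaw is your justification of flatness at the ``main obstacle'': constancy of the Hilbert polynomial implies flatness only over a reduced base, and $Z^{\nest}\setminus B$, being a locally closed subscheme of a nested Hilbert scheme, is not known to be reduced at this stage (indeed, reducedness would only come out a posteriori from the very isomorphism you are proving). Fortunately flatness is automatic for a different reason: $\II_{C_0}/\II_C$ is the kernel of a surjection of sheaves flat over the base, so the long exact sequence of $\Tor$ gives its flatness; combined with your fiberwise identification from (i) (flat over the base plus fiberwise invertible implies invertible on the family), the cohomology-and-base-change descent then goes through as you outline.
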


\Pf . (i) The exact sequence
$$0\to \II_{C_0}/\II_C\to \OO_C\to \OO_{C_0}\to 0$$ shows that
$\II_{C_0}/\II_C$ has Hilbert polynomial $n\mapsto n(g-1)-g$.  Furthermore,
this is a quotient of $\II_{C_0}/\II_{C_0}^2\simeq \om_{C_0}(-1)^{\oplus
  g-2}$. It follows that $\II_{C_0}/\II_C$ is a sheaf of rank $1$ on
$\P^1$. But a sheaf of rank $1$ on $\P^1$ has form $\OO_{\P^1}(a)\oplus
\TT$, where $\TT$ is torsion, so its Hilbert polynomial with respect to
$\OO_{\P^1}(g-1)$ is $n(g-1)+a+\ell(\TT)+1$. Thus, we should have
$a+\ell(\TT)+1=-g$, so $a=-g-1-\ell(\TT)$. But if $a<-g-1$ then there are
no morphisms from $\II_{C_0}/\II_{C_0}^2$ to $\OO_{\P^1}(a)$, so we should
have $\TT=0$ and $\II_{C_0}/\II_C\simeq \om_{C_0}(-1)$.

\noindent
(ii) As in (i) we see that $Z^{\nest}\setminus B$ is isomorphic to the
relative Quot-scheme corresponding to quotients of $\II_{C_0}/\II_{C_0}^2$
with the Hilbert polynomial $n(g-1)-g$. We have a natural map from the
projective bundle in question to the Quot-scheme, which is bijective on
geometric points. It remains to observe that the Quot-scheme is smooth over
$RNC$. Indeed, let $L=\om_{C_0}(-1)$. Then for a surjective map
$\la:L^{\oplus g-2}\to L$ we have $\ker(\la)\simeq L^{\oplus g-3}$ so
$\Hom(\ker(\la),L)$ has dimension $g-3$.  \ed

\begin{prop}\label{nest-ribbon-prop} 
Assume the characteristic of $k$ is $\neq 2$.
Then the projection $\pi$ induces a closed embedding $Z^{\nest}\setminus B\to H_g(\P^{g-1})\setminus \pi(B)$.
\end{prop}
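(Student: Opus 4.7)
The plan is to verify the three criteria of Stacks Project Tag 04XV: $\pi|_{Z^{\nest}\setminus B}$ should be proper, universally injective, and unramified. Properness is essentially formal. Since $Z^{\nest}$ is a closed subscheme of the projective scheme $H^{rnc}(\P^{g-1})\times H_g(\P^{g-1})$, the map $\pi$ is proper; once we establish $\pi^{-1}(\pi(B))=B$, the restriction to $Z^{\nest}\setminus B\to H_g(\P^{g-1})\setminus\pi(B)$ remains proper.

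The first main ingredient is a Hilbert-polynomial uniqueness statement, which will simultaneously yield $\pi^{-1}(\pi(B))=B$ and universal injectivity: for a canonical ribbon $C\subset\P^{g-1}$, the only closed subscheme $C_0'\subset C$ with Hilbert polynomial $n(g-1)+1$ and $\II_{C_0'}^2\subset\II_C$ is $C_0=C_{\red}$, which is in particular a rational normal curve. Indeed, $C_0'$ is one-dimensional, so $(C_0')_{\red}\subset C_{\red}=C_0\simeq\P^1$ forces $(C_0')_{\red}=C_0$, giving $C_0\subset C_0'\subset C$ and thus $\II_{C_0'/C}\subset\LL=\om_{C_0}(-1)$ (the nilradical of $\OO_C$). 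Since $\OO_C/\II_{C_0'/C}$ and $\OO_C/\LL=\OO_{C_0}$ share Hilbert polynomial $n(g-1)+1$, the quotient $\LL/\II_{C_0'/C}$ has vanishing Hilbert polynomial, hence vanishes, and $C_0'=C_0$. As this argument is field-independent, it yields universal injectivity.

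The main technical step is to verify unramifiedness, i.e., injectivity of $d\pi$ at every closed point $(C_0,C=\RR_\la)\in Z^{\nest}\setminus B$. Given a first-order deformation $\wt{C}_0$ of $C_0$ inside the trivial deformation $\wt{C}=C\times\Spec k[\eps]/(\eps^2)$ satisfying $\II_{\wt{C}_0}^2\subset\II_{\wt{C}}=\II_C[\eps]$, I must show that $\wt{C}_0$ is trivial. Parameterize $\wt{C}_0$ by $\phi\in\Hom_{\OO_{C_0}}(\II_{C_0}/\II_{C_0}^2,\OO_{C_0})$, decompose $\II_{C_0}/\II_{C_0}^2\simeq V\otimes_k\LL$ with $V=H^0(\P^1,\OO(g-3))$ via Lemma \ref{normal-bundle-lem}, and encode $C=\RR_\la$ by $\la\in V^\vee$. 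Writing local generators $\tilde{x}=x+\eps x'$, $\tilde{y}=y+\eps y'$ of $\II_{\wt{C}_0}$ with $x'|_{C_0}=\phi(\bar{x})$, the condition $xy'+x'y\in\II_C$ projects, via $\II_{C_0}/\II_C\simeq\LL$, to the relation
\[
\la_i\phi_j+\la_j\phi_i=0\quad\text{in }H^0(C_0,\LL^{-1})\simeq H^0(\P^1,\OO(g+1))
\]
for all $i,j$, where $\phi=(\phi_i)\in V^\vee\otimes H^0(C_0,\LL^{-1})$ relative to a basis $\{v_i\}$ of $V$. Since $\la\ne 0$, pick $i_0$ with $\la_{i_0}\ne 0$; setting $i=j=i_0$ gives $2\la_{i_0}\phi_{i_0}=0$, and the hypothesis $\cha k\ne 2$ forces $\phi_{i_0}=0$. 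The mixed relations $\la_{i_0}\phi_j=\la_{i_0}\phi_j+\la_j\phi_{i_0}=0$ then give $\phi_j=0$ for all $j$, so $\phi=0$ and $\wt{C}_0$ is trivial.

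Combining these, $\pi|_{Z^{\nest}\setminus B}$ is proper, universally injective, and unramified, hence a closed immersion. The main obstacle, and the place where the characteristic hypothesis is essential, is the tangent-space computation: in characteristic $2$ the diagonal relation $2\la_{i_0}\phi_{i_0}=0$ becomes vacuous and the map genuinely fails to be unramified.
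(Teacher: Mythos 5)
Your proposal is correct, and while it shares the outer skeleton of the paper's proof (a proper map that is injective on points in a suitably infinitesimal sense is a closed immersion, with properness and the identity $\pi^{-1}(\pi(B))=B$ handled essentially the same way), the key technical mechanism is genuinely different. The paper proves in one stroke that $\pi$ is injective on $R$-points for \emph{every} local ring $R$: for $(C_0,C)\in(Z^{\nest}\setminus B)(R)$, the curve $C_0$ is recovered functorially from $C$ as its singular subscheme, i.e.\ the zero locus of the first Fitting ideal of $\Om_{C/R}$; locally $C\simeq\Spec(R[x,\eps]/\eps^2)$, $\Om_{C/R}$ is generated by $dx$, $d\eps$ with the single relation $2\eps\, d\eps=0$, so this Fitting ideal is $(2\eps)$, which cuts out exactly the reduced curve precisely because $2$ is invertible. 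That single observation subsumes both your universal injectivity and your unramifiedness (the case $R=k[\eps]/(\eps^2)$), and it moreover exhibits a functorial inverse of $\pi$ on its image. You instead split the monomorphism property into two halves: universal injectivity via the Hilbert-polynomial comparison (this half coincides with the paper's argument that $\pi^{-1}(\pi(B))=B$), and unramifiedness via a hands-on computation of $\ker(d\pi)$, in which the condition $\II_{\wt{C}_0}^2\sub\II_C[\eps]$ symmetrizes, via $\II_{C_0}/\II_C\simeq\LL$ and the decomposition of Lemma \ref{normal-bundle-lem}, to the relations $\la_i\phi_j+\la_j\phi_i=0$ in $H^0(\P^1,\OO(g+1))$, which $\cha(k)\neq 2$ forces to give $\phi=0$. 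Your deformation computation is correct, and what it buys is an explicit description of the differential of $\pi$ in terms of the ribbon parameter $\la$, locating the failure in characteristic $2$ (the vanishing of the diagonal coefficient $2\la_{i_0}\phi_{i_0}$) that the paper expresses by saying the map becomes inseparable over its image; what it costs is length and two small points you should tighten: unramifiedness must be verified at geometric points, not only $k$-rational ones (your computation is insensitive to field extension, so this is cosmetic), and you should note that, by the functorial definition of $Z^{\nest}$, its $k[\eps]/(\eps^2)$-points are exactly the pairs of flat deformations satisfying $\II_{\wt{C}_0}^2\sub\II_{\wt{C}}$, so that the kernel of $d\pi$ is indeed the set of $\phi$ you analyze.
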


\Pf . The main observation is that if $(C_0,C)\in (Z^{\nest}\setminus
B)(R)$ then $C_0(R)$ can be recovered from $C(R)$ as its singular locus,
i.e., as the subscheme defined by the $1$st Fitting ideal of $\Om_{C/R}$
(here $R$ is a local commutative ring). Indeed, the complement to an
$R$-point of $C_0$ can be identified with $\Spec(R[x])$ so that the
corresponding open subset in $C$ is $\Spec(R[x,\epsilon]/\epsilon^2)$. Then
we claim that the singular subscheme is cut out by the ideal $(2\epsilon)$.
Indeed, $\Omega_{C/R}$ is locally generated by $dx$ and $d\epsilon$ subject
to the relation $2\epsilon d\epsilon=0$, so we get that the Fitting ideal
is generated by $2\epsilon$.

Note also that if $(C_0,C)\in Z^{\nest}\setminus B$ then we cannot have $C\in \pi(B)$:
any subscheme $C'_0\sub C$ with $\II_{C'_0}^2\sub \II_C$ should have $C_0$ as its reduced subscheme, so
$C_0\sub C'_0$. Since the Hilbert schemes of $C_0$ and $C'_0$ are the same, this implies that $C'_0=C_0$.

Thus, we have $\pi^{-1}(\pi(B))=B$, and so the morphism
$$\pi: Z^{\nest}\setminus B\to H_g(\P^{g-1})\setminus \pi(B)$$ is
proper. Since it induces a bijection on $R$-points, for $R$ local, it is a
closed embedding.  \ed

\begin{definition} Assume the characteristic of $k$ is $\neq 2$. The {\it ribbon locus} in the Hilbert scheme, 
$$R_g\sub H_g(\P^{g-1})$$ 
is the image of the above locally closed embedding of $Z^{\nest}\setminus B$.
\end{definition}

\begin{rmk}
There are difficulties in defining the ribbon locus in characteristic 
$2$, coming from the fact that the map from $Z^{\nest}\setminus B$, though a
bijection, is no longer an embedding (it is inseparable over its image). 
As a result, any attempt to extend this definition to include 
characteristic 2 will either fail to be flat or fail to have reduced 
fiber over $2$.
\end{rmk}

\subsection{The blow-up of the ribbon locus}

Let $\wt{\MM}_g$ be the $\PGL_g$-bundle over $\MM_g$ associated with the Hodge vector bundle $\pi_*(\om_{\CC_g/\MM_g})$,
and let $\wt{\Hyp}_g\sub \wt{\MM}_g$ be the preimage of the hyperelliptic locus.
By Corollary \ref{regular-map-cor}, we have a regular map
\begin{equation}\label{blow-up-map}
\Bl_{\wt{\Hyp}_g}\wt{\MM}_g\to H_g(\P^{g-1}),
\end{equation}
where $H_g(\P^{g-1})$ is the Hilbert scheme of canonical curves of genus $g$ in $\P^{g-1}$.

On the other hand, we can consider
the blowup of the Hilbert scheme in the closure of the ribbon subscheme $\ov{R}_g\sub H_g(\P^{g-1})$.

It is known that $R_g$ is contained in the smooth locus of $H_g(\P^{g-1})$ (see \cite[Sec.\ 6]{BE}).
The tangent space to $H_g(\P^{g-1})$ at a ribbon $C\sub\P^{g-1}$ is $H^0(C,N_C)$, where $N_C$ is the normal sheaf. 
Let $D=C^{red}\simeq \P^1$. Then $h^0(N_C)=g^2+3g-4$ and there are natural surjective morphisms 
$$H^0(C,N_C)\to H^0(D,N_C|_D)\to H^0(\P^1,\OO(2g+2)).$$
Indeed, this follows from the exact sequences
$$0\to \OO_{\P^1}(g+1)^{g-3}\to N_C|_D\to \OO_{\P^1}(2g+2)\to 0$$
$$0\to N_C|_D\ot \OO_{\P^1}(-g-1)\to N_C\to N_C|_D\to 0$$
(see \cite[Sec.\ 6]{BE}).
Note that the first sequence here is dual to the exact sequence
$$0\to \II_D^2/\II_D\II_C\to \II_C/\II_D\II_C\to \II_C/\II_D^2\to 0.$$


Let us denote by 
$$N_{R_g}H_g(\P^{g-1}):= N_{\ov{R}_g} H_g(\P^{g-1})|_{R_g}$$ 
the normal bundle of $\ov{R}_g$ in the Hilbert scheme,
over $R_g\sub \ov{R}_g$.

\begin{lem}\label{normal-ribbon-lem} 
Assume that the characteristic is $\neq 2$.
Then the above construction defines an isomorphism
\begin{equation}\label{discriminant-map-eq}
N_{R_g} H_g(\P^{g-1})|_{[C]}\rTo{\sim} H^0(\P^1,\OO(2g+2)).
\end{equation}
\end{lem}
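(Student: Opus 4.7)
The plan is to use a dimension count together with the vanishing of the composition $H^0(C,N_C)\twoheadrightarrow H^0(\P^1,\OO(2g+2))$ on the tangent space to the ribbon locus $R_g$.

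First I will compute dimensions. By Lemma \ref{nested-Hilb-lem}(ii) and Proposition \ref{nest-ribbon-prop}, $R_g$ is isomorphic to a $\P^{g-3}$-bundle over the locus $RNC$ of rational normal curves in $\P^{g-1}$; since these form a single $PGL_g$-orbit with stabilizer $PGL_2$, we have $\dim RNC=g^2-4$, so $R_g$ is smooth of dimension $g^2+g-7$. Combined with $h^0(N_C)=g^2+3g-4$ and $h^0(\P^1,\OO(2g+2))=2g+3$, this gives
\[
\dim T_{[C]}H_g(\P^{g-1})-\dim T_{[C]}R_g \;=\; 2g+3 \;=\; h^0(\P^1,\OO(2g+2)).
\]
Since the composition $H^0(C,N_C)\to H^0(\P^1,\OO(2g+2))$ is already known to be surjective, it will be enough to show that it vanishes on $T_{[C]}R_g$: then the induced map on $N_{R_g}H_g(\P^{g-1})|_{[C]}$ is a surjection between spaces of equal dimension, hence an isomorphism.

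The main step is that vanishing. A section $v\in H^0(N_C)$ is an $\OO_C$-linear map $\II_C/\II_C^2\to\OO_C$; restricting to $D=\P^1$ and composing with $\OO_C\to\OO_D$ yields an $\OO_D$-linear map $\II_C/\II_D\II_C\to\OO_D$, and by duality of the exact sequence $0\to\II_D^2/\II_D\II_C\to\II_C/\II_D\II_C\to\II_C/\II_D^2\to 0$ recalled in the statement, the quotient $N_C|_D\twoheadrightarrow\OO(2g+2)$ is precisely the restriction of this map to the submodule $\II_D^2/\II_D\II_C$. Now by Proposition \ref{nest-ribbon-prop} and Lemma \ref{nested-Hilb-lem}(ii), any tangent vector to $R_g$ lifts to a first-order nested deformation $(D_\eps,C_\eps)$ with $\II_{D_\eps}^2\subset\II_{C_\eps}$. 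For $f=\sum a_ib_i\in\II_D^2$ with $a_i,b_i\in\II_D$, I will choose lifts $\ti a_i=a_i+\eps a_i'$, $\ti b_i=b_i+\eps b_i'$ in $\II_{D_\eps}$; then $\ti f:=\sum\ti a_i\ti b_i\in\II_{D_\eps}^2\subset\II_{C_\eps}$ lifts $f$, and its $\eps$-coefficient $\sum(a_ib_i'+a_i'b_i)$ lies in $\II_D$ by the Leibniz rule. Hence its image in $\OO_D$ is zero, so $v$ maps to zero in $H^0(\OO(2g+2))$.

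The main obstacle is the geometric step: pinning down the interpretation of the surjection $N_C|_D\to\OO(2g+2)$ as evaluation of $v$ on $\II_D^2/\II_D\II_C$, so that the nested-deformation condition $\II_{D_\eps}^2\subset\II_{C_\eps}$ translates into the Leibniz calculation above. Once this identification is made, smoothness of $R_g$ and the dimension count combine with the vanishing to yield the isomorphism. (Note that the hypothesis $\cha k\neq 2$ enters only through Proposition \ref{nest-ribbon-prop}, which supplies the smooth model $Z^{\nest}\setminus B$ of $R_g$.)
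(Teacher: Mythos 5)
Your proof is correct, and its skeleton coincides with the paper's: both start from the surjectivity of $H^0(C,N_C)\to H^0(\P^1,\OO(2g+2))$ recorded just before the lemma, use the identification of $R_g$ with a $\P^{g-3}$-bundle over $RNC\simeq\PGL_g/\PGL_2$ (hence $\dim T_{[C]}R_g=g^2+g-7$ and a normal space of dimension $2g+3$), and reduce everything to showing that the map kills $T_{[C]}R_g$. Where you genuinely diverge is in that vanishing step. The paper presents $T_{[C]}R_g$ as a quotient of $\gl_g\oplus H^0(\P^1,\OO(g-3))^*$ and checks the two kinds of directions separately: an element of $\gl_g$ acts as a degree-zero derivation of the homogeneous coordinate ring, so carries $\II_D^2$ into $\II_D$, while an element of $H^0(\P^1,\OO(g-3))^*$ deforms $C$ among ribbons over the same $D$, so carries $\II_C$ into $\II_D$; either way the induced map $\II_D^2\to\OO_D$ vanishes. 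You instead handle an arbitrary tangent vector at once: by Proposition \ref{nest-ribbon-prop} it lifts to a nested first-order deformation with $\II_{D_\eps}^2\sub\II_{C_\eps}$, and your Leibniz computation with the lifts $\ti a_i,\ti b_i$ shows directly that the value of the corresponding normal-sheaf section on $\II_D^2$ lies in $\II_D$. Your route buys uniformity --- you never need to exhibit generators of $T_{[C]}R_g$ nor verify that the two explicit families of deformations span it (which is exactly where the paper's presentation is tied to the bundle structure of Lemma \ref{nested-Hilb-lem}(ii)); the paper's route buys an explicit geometric decomposition of ribbon deformations (moving the underlying rational normal curve by $\PGL_g$ versus changing the ribbon structure over a fixed $D$), in line with how these deformations are used elsewhere in the section. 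Both arguments rest on the same Leibniz-type mechanism at bottom, and both invoke characteristic $\neq 2$ only through Proposition \ref{nest-ribbon-prop}, as you note.
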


\Pf .  Recall that by Lemma \ref{nested-Hilb-lem}(ii) and Proposition
\ref{nest-ribbon-prop}, the ribbon locus $R_g$ can be identified with a
$\P^{g-3}$-bundle over the locus $RNC$ of rational normal curves.  Further,
we can identify $RNC$ with the homogeneous space $\PGL_g/\PGL_2$.

We thus find that the tangent 
sheaf to the ribbon subscheme of the Hilbert scheme fits into a short 
exact sequence with fibers
$$0\to \gl_2\times \gl_1
 \to \gl_g\oplus \Gamma(\sO_{\P^1}(g-3))^*
 \to T
 \to 0.$$
This has the correct dimension $g^2+(g-2)-5 = (g^2-1)+(3g-3)-(2g+3)$ to be 
the kernel of the map to $\Gamma(\sO_{\P^1}(2g+2))$, and thus it remains 
only to show that it maps to said kernel, or equivalently that the 
induced map from $\gl_g\oplus \Gamma(\sO_{\P^1}(g-3))^*$ to 
$\Gamma(\sO_{\P^1}(2g+2))$ is zero.  An element of $\gl_g$ corresponds to a 
degree 0 derivation of the homogeneous coordinate ring of $\P^{g-1}$, and 
the induced map from $I_C$ to $\sO_D$ is obtained by applying the derivation 
then restricting to $\sO_D$.  Since any element of $\gl_g$ takes $I_D^2$ to 
$I_D$, the map $I_C\to \sO_D$ corresponding to such an element vanishes on 
$I_D^2$ as required.  Similarly, the action of the additive group 
$\Gamma(\sO_{\P^1}(g-3))^*$ maps $I_C$ to $I_D$ (it takes polynomials 
vanishing on $C$ to polynomials vanishing on a different ribbon over the 
same base curve $D$), and thus any element of the Lie algebra induces the 
trivial map $I_C\to \sO_D$.

Thus, we have a well defined surjective map \eqref{discriminant-map-eq}. Since the dimensions
of both spaces are equal to $2g+3$, we deduce that it is an isomorphism.
\ed

We define the discriminant locus $\DD\sub \P N_{R_g} H_g(\P^{g-1})$
to be the divisor corresponding under the map \eqref{discriminant-map-eq} to the locus of 
sections of $\OO_{\P^1}(2g+2)$ with non-simple zeros. 

\begin{thm}\label{two-blow-ups-thm}
Assume that the characteristic is $\neq 2$.
Then the morphism of the coarse moduli obtained from \eqref{blow-up-map} factors through an open immersion
$$[\Bl_{\wt{\Hyp}_g}\wt{\MM}_g]^{\coarse}\hra \Bl_{\ov{R}_g}H_g(\P^{g-1})$$
Furthermore, its image is the complement to the union of 
\begin{itemize}
\item the preimage of $\ov{R}_g\setminus R_g$,
\item the strict transform of the singular
locus in $H_g(\P^{g-1})$, 
\item and of the closure $\ov{\DD}$ of the discriminant locus in the exceptional locus of the blow-up.
\end{itemize}
\end{thm}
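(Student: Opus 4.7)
The plan is to construct the desired map into $\Bl_{\ov{R}_g}H_g(\P^{g-1})$ via the universal property of blow-ups, then to verify that it is an open immersion whose image is the claimed complement. The key ingredients are Corollary \ref{regular-map-cor} (regular map to the canonical Hilbert scheme), Lemma \ref{limit-lem} (hyperelliptic limits are canonical ribbons), Corollary \ref{ker-obs-cor} and Lemma \ref{normal-ribbon-lem} (identification of the two relevant normal bundles), and Lemma \ref{ribbon-eq-syz-lem} (explicit local equations in $X_g$).

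By Corollary \ref{regular-map-cor}, extended $\PGL_g$-equivariantly over the $\PGL_g$-bundle structure, we have a regular morphism $\Bl_{\wt{\Hyp}_g}\wt{\MM}_g \to H_g(\P^{g-1})$ which, by Lemma \ref{limit-lem}, sends the exceptional divisor into $R_g \subset \ov{R}_g$. To lift this to $\Bl_{\ov{R}_g}H_g(\P^{g-1})$ it suffices to show that the pullback of $\II_{\ov{R}_g}$ becomes invertible. I would work locally in a chart of the blow-up with $\eps$ a local equation of the exceptional divisor, using the weighted projective embedding of Lemma \ref{ribbon-eq-syz-lem}: the universal canonical curve in $X_g$ has equations interpolating between the ribbon relations \eqref{can-ribbon-equations} and the hyperelliptic relations \eqref{hyperell-equations}, and after passing to the blow-up the $2g+3$ local generators of $\II_{\ov{R}_g}$ (corresponding under Lemma \ref{normal-ribbon-lem} to the components of $H^0(\P^1, \OO(2g+2))$) should all acquire a common factor $\eps$, with cofactors encoding the branch section of the hyperelliptic cover.

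To show the induced morphism of coarse moduli is an open immersion, I would compare dimensions and analyze the induced map on exceptional divisors. Both blow-ups have dimension $g^2+3g-4$; the source exceptional fiber over a point of $\wt{\Hyp}_g$ is $\P(H^0(\P^1, \OO(g-3))^\vee) \cong \P^{g-3}$ (via Corollary \ref{ker-obs-cor}), while the target exceptional fiber over a point of $R_g$ is $\P H^0(\P^1, \OO(2g+2)) \cong \P^{2g+2}$ (via Lemma \ref{normal-ribbon-lem}). The induced map on exceptional divisors sends $(C_0, \text{basis}, v)$ to the pair $(\RR_v, u)$, where $\RR_v$ is the canonical ribbon parametrized by $v$ and $u$ is the branch section of the double cover $C_0 \to \P^1 \cong C_r$. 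This is injective on geometric points, since $\RR_v$ recovers both $v$ and the underlying rational normal curve $C_r$, while $u$ together with $C_r$ recovers $C_0$ as the double cover branched at the zeros of $u$. Since both blow-ups are smooth near the relevant loci (source by smoothness of $\wt{\MM}_g$ and $\wt{\Hyp}_g$, target by the Bayer-Eisenbud smoothness result at ribbons), matching dimension and pointwise injectivity together yield an open immersion.

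For the identification of the image, the three excluded loci must indeed be avoided: points over $\ov{R}_g \setminus R_g$ correspond to ribbons not supported on rational normal curves and cannot be hyperelliptic limits by Lemma \ref{limit-lem}; the strict transform of the singular locus of $H_g(\P^{g-1})$ is avoided since smooth canonical curves and canonical ribbons both lie in the smooth locus by \cite{BE}; and a point of the exceptional divisor over $R_g$ with branch data $u$ arises from a smooth hyperelliptic $C_0$ only when $u$ has simple zeros, i.e., off $\ov{\DD}$, otherwise the reconstructed double cover is singular and does not lie in $\MM_g$. Conversely, any point avoiding all three loci is hit by reversing the reconstruction. The main technical obstacle is the invertibility in Step 1: matching the $2g+3$ generators of $\II_{\ov{R}_g}$ to the single exceptional parameter $\eps$ times the $2g+3$ coefficients of the branch section requires carefully tying the deformation theory of hyperelliptic-to-canonical degenerations to the explicit weighted-projective equations of Lemma \ref{ribbon-eq-syz-lem}, which is where that lemma really earns its keep.
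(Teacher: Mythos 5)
Your overall architecture is genuinely different from the paper's: you propose to produce the lift via the universal property of blow-ups (invertibility of the pulled-back ideal of $\ov{R}_g$) and to conclude ``open immersion'' from pointwise injectivity plus equal dimensions, whereas the paper constructs two mutually inverse regular morphisms, $[\Bl_{\wt{\Hyp}_g}\wt{\MM}_g]^{\coarse}\to \Bl_{\ov{R}_g}H_g(\P^{g-1})$ and $U\to[\Bl_{\wt{\Hyp}_g}\wt{\MM}_g]^{\coarse}$, by feeding the dvr-computations of Propositions \ref{hyperell-to-ribbon-prop} and \ref{ribbon-to-hyperell-prop} into Smyth's regularity criterion (Lemma \ref{Smyth-lem}). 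The problem is that your plan defers exactly the hard content and never supplies it. The invertibility in your Step 1 is not a consequence of Lemma \ref{ribbon-eq-syz-lem}: that lemma gives canonical equations for ribbons and for hyperelliptic curves separately, but to control an \emph{arbitrary} family near $\wt{\Hyp}_g$ one must first split its deformation into even and odd parts under the hyperelliptic involution and normalize away the even part by the $\gl_g$- and $\gl_{g-2}$-actions; this is the proof of Proposition \ref{hyperell-to-ribbon-prop}, and it produces the order-doubling phenomenon (hyperelliptic to order exactly $d$ gives a ribbon to order exactly $2d$) that your write-up never registers --- on the stack blow-up the common factor is $\eps^2$, not $\eps$, and it is precisely this doubling, absorbed by the $\Z/2$-stabilizer along $\wt{\Hyp}_g$, that makes the coarse space the correct source in the statement. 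Likewise, your description of the exceptional-divisor map $(C_0,v)\mapsto(\RR_v,\text{branch section})$ is second-order information: Lemma \ref{limit-lem} and Corollary \ref{ker-obs-cor} identify \emph{which} ribbon arises, but the claim that the normal vector to the ribbon locus is the branch section of $C_0$ requires Lemma \ref{normal-ribbon-equation-lem} together with the reduction to canonical form, which you neither prove nor cite.

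There are two further gaps. First, ``pointwise injectivity $+$ equal dimension $+$ smoothness $\Rightarrow$ open immersion'' is false in characteristic $p$ (Frobenius is a counterexample), and the paper is explicitly alert to this: the remark after Proposition \ref{nest-ribbon-prop} notes that in characteristic $2$ the map from $Z^{\nest}\setminus B$ is a bijection that is inseparable over its image. Since the theorem covers all characteristics $\neq 2$, you must either add birationality and invoke Zariski's main theorem (birational $+$ quasi-finite $+$ normal target), or construct the inverse morphism $U\to[\Bl_{\wt{\Hyp}_g}\wt{\MM}_g]^{\coarse}$ as the paper does, via rescaling the $v$-variables and normalizing (Proposition \ref{ribbon-to-hyperell-prop}) plus Smyth's Lemma. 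Second, your image identification misreads the excluded locus: ``the singular locus in $H_g(\P^{g-1})$'' means the locus of \emph{singular curves} --- which contains $R_g$ itself, which is exactly why its \emph{strict} transform appears --- not the non-smooth locus of the Hilbert scheme that the Bayer--Eisenbud result addresses. Showing the image avoids this strict transform is not a matter of checking that image points are smooth points of $H_g(\P^{g-1})$: one must show that an exceptional point whose normal vector has simple zeros cannot be a limit of singular non-ribbon curves. The paper deduces this from Proposition \ref{ribbon-to-hyperell-prop}: along such a direction the normalization of any family has smooth special fiber (the double cover attached to a simple-zero section of $\OO_{\P^1}(2g+2)$), hence smooth general fiber, a contradiction. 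Without this, and without the reconstruction argument (including the ramified base change in the odd-order case) that gives surjectivity onto $U$, the claimed description of the image is unsupported.
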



\begin{rmk}
Away from the hyperelliptic locus the stacky structure on $\wt{\MM}_g$ is trivial, whereas along
$\wt{\Hyp}_g$, we have automorphism group $\Z/2$, coming from the hyperelliptic
involution. This $\Z/2$ acts trivially on the tangent space to $\wt{\Hyp}_g$ and acts by $-1$
on the normal space to $\wt{\Hyp}_g$ in $\wt{\MM}_g$. This easily implies that 
$[\Bl_{\wt{\Hyp}_g}\wt{\MM}_g]^{\coarse}$ is smooth and coincides with the blow up of
the coarse moduli space $[\wt{\MM}_g]^{\coarse}$ along $[\wt{\Hyp}_g]^{\coarse}$.
\end{rmk}

\begin{definition}
Suppose $X$ is a smooth scheme, $Z\sub X$ is a smooth subscheme.
Assume we are given a dvr $R$ with residue field $k$ and fraction field $K$,
and a map $t:\Spec(R)\to X$ such that $t(\Spec(K))\in X\setminus Z$ and $t(\Spec(k))=z\in Z$.
Then there is a unique lifting of $t$,
$$\wt{t}:\Spec(R)\to \Bl_Z(X),$$ so that $\wt{t}(\Spec(k))$ is a point in
the exceptional divisor over $z\in Z$, and thus corresponds to a normal
direction to $Z$ at $z$. We will refer to $\wt{t}(\Spec(k))$ as the {\it
  normal vector associated with} $t:\Spec(R)\to X$.
\end{definition}





First, we need a characterization of normal vectors to the ribbon locus (resp., hyperelliptic locus)
associated with deformations of ribbons (resp., hyperelliptic curves) embedded into $X_g$.
Below we always assume that the characteristic is $\neq 2$.

\begin{lem}\label{normal-ribbon-equation-lem} 
(i) Let $R$ be a dvr with the maximal ideal $(\pi)$, $d\ge 1$ an integer, and let
\begin{align*}
&(uu)_0+(v)+\pi^d(uu)=0,\\
&(uv)_0+\pi^d(uuu)+\pi^d(uv)=0,\\
&v_iv_j-\pi^d p_{ij}(u)+\pi^d(uuv)+\pi^d(vv)
\end{align*}
be a subscheme in $X_g\times R/(\pi^{d+1})$ 
giving a deformation of a nonhyperelliptic ribbon $C$ over $R/(\pi^d)$.
Then the polynomials $(p_{ij}(u))$ correspond to the section of 
$\sO_{\P^1}(2g+2)$ associated with the normal vector to the ribbon locus defined by this deformation
(see \ref{normal-ribbon-lem}).

\noindent
(ii) Similarly, let 
\begin{align*}
&(uu)_0+\pi^d(v)+\pi^d(uu)=0,\\
&(uv)_0+\pi^d(uuu)+\pi^d(uv)=0,\\
&v_iv_j-p_{ij}(u)+\pi^d(uuv)+\pi^d(vv)+\pi^d(uuuu)
\end{align*}
be a subscheme in $X_g\times R/(\pi^{d+1})$ 
giving a deformation of a hyperelliptic curve $C$ over $R/(\pi^d)$.
Then the terms $(v)$ define a section of $N(g+1)$ on the rational normal curve $\P^1\simeq D\sub \P^{g-1}$, where $N$
is the normal bundle to $D$ in $\P^{g-1}$, and this section determines the normal vector to the hyperelliptic
locus associated with this deformation.
\end{lem}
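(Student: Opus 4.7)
The plan is to compute the first-order obstruction class directly from the deformation equations in both parts, and to identify it with the claimed section using the bundle identifications established earlier---namely Lemma \ref{normal-ribbon-lem} for (i), and Lemma \ref{normal-bundle-lem} together with the obstruction machinery of Section \ref{1st-order-obstruction-sec} for (ii).

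For (i), I would first extract the deformation class $\eta\in H^0(C,N_C)=\Hom_C(\II_C/\II_C^2,\OO_C)$ by taking the $\pi^d$-coefficient of each defining relation modulo $\II_C$. By the proof of Lemma \ref{normal-ribbon-lem}, the isomorphism $N_{R_g}H_g(\P^{g-1})|_{[C]}\simeq H^0(\P^1,\OO(2g+2))$ factors as the restriction $H^0(N_C)\to H^0(N_C|_D)$ composed with the projection dual to the inclusion $\II_D^2/\II_D\II_C\hookrightarrow \II_C/\II_D\II_C$, using $\II_D^2/\II_D\II_C\simeq\LL^{\otimes 2}\simeq\OO_{\P^1}(-2g-2)$ with the $v_iv_j$ serving as generators. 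This projection only sees the values of $\eta|_D$ on the $v_iv_j$'s, which lie in $\II_D^2$, so the contributions to the generators $(uu)_0+(v)$ and $(uv)_0$ of $\II_C$ are irrelevant. Among the corrections to $v_iv_j$, the $(uuv)$ and $(vv)$ terms contain factors $v_k\in \II_D$ and so vanish modulo $\II_D$; only $p_{ij}(u)|_D$ survives. This produces the $\OO_D$-linear map $v_iv_j\mapsto p_{ij}(u)|_D$, which is exactly the claimed section of $\OO_{\P^1}(2g+2)$.

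For (ii), the strategy is analogous but uses the induced deformation of $D\subset\P^{g-1}$. The canonical map $C\to\P^{g-1}$ of the hyperelliptic curve has image $D$, and the deformation in $X_g$ induces a first-order deformation of $D$ in $\P^{g-1}$ whose class in $H^0(D,N_D)$ is encoded by the $(v)$-corrections of the $(uu)_0$-equations: the $(uu)$ correction can be absorbed into $(uu)_0$ itself at first order and contributes trivially, leaving the $(v)$-terms as the essential deformation data. Identifying the $v_k$'s with the basis of $H^0(\P^1,\OO(g-3))$ via Lemma \ref{normal-bundle-lem} exhibits this as the claimed section of $N(g+1)$. Finally, to match with the normal vector to $\wt{\Hyp}_g$ in $\wt{\MM}_g$, I would invoke Proposition \ref{general-obstr-prop} and Corollary \ref{1st-order-obstruction-cor}: the $(v)$-data provides precisely the obstruction pairing \eqref{line-bun-obstr-pairing-formula}, recovering the isomorphism \eqref{obs-normal-space-eq} between the normal space to $\wt{\Hyp}_g$ and $H^0(\P^1,\OO(g-3))^\vee$.

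The main difficulty I anticipate is in (ii): rigorously extracting the induced first-order deformation of $D\subset\P^{g-1}$ from the deformation of $C$ in $X_g$. Since the $v_j$'s are nonzero odd sections on the double cover $C\to D$, they cannot literally be eliminated, and the relationship between deformations of $C$ in $X_g$ and deformations of the image in $\P^{g-1}$ is not transparent. One must argue that at first order in $\pi^d$ the image deforms as a subscheme whose ideal is generated by the $(v)$-deformed $(uu)_0$ equations, with the degree $3$ and $4$ relations becoming redundant at first order. Equivalently, the natural morphism from deformations of $C$ in $X_g$ to the Hilbert scheme $H_g(\P^{g-1})$ must be checked to have derivative with the claimed $(v)$-component on the $(uu)_0$ generators, which can be done by combining Lemma \ref{limit-lem} with the tangent-space computation from Lemma \ref{nested-Hilb-lem}.
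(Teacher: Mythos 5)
Your part (i) is correct and is essentially the paper's own argument: you factor the isomorphism of Lemma \ref{normal-ribbon-lem} through evaluation on $\II_D^2/\II_C\II_D$, observe that the monomials $v_iv_j$ represent the image of $\II_D^2$ in $\II_{C,X_g}$, and note that the correction terms $(uuv)$ and $(vv)$ die after mapping to $\OO_D$, leaving only $p_{ij}(u)|_D$. This is, step for step, the paper's composition $\Hom(\II_{C,X_g},\OO_C)\to\Hom(\II_C,\OO_C)\to\Hom(\II_C/\II_C\II_D,\OO_D)\to\Hom(\II_D^2/\II_C\II_D,\OO_D)$.

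Part (ii), however, has a genuine gap, and it sits exactly where you flag a ``difficulty'': the claim that the deformation induces a first-order deformation of $D\sub\P^{g-1}$ whose class in $H^0(D,N_D)$ is carried by the $(v)$-corrections is false, not merely delicate. The $v_j$ are not functions on $\P^{g-1}$ (on $C$ they are odd sections of $\om_C^{\ot 2}$), so the $(v)$-deformed $(uu)_0$ equations cut out no subscheme of $\P^{g-1}$ at all; and the numbers do not match: by Lemma \ref{normal-bundle-lem} one has $N_D\simeq H^0(\P^1,\OO(g-3))^\vee\ot\OO_{\P^1}(g+1)$, so $h^0(D,N_D)=(g-2)(g+2)$, whereas the $(v)$-data lives in a space of dimension $g-2$ --- which is the whole point of the statement, since $g-2$ is the codimension of the hyperelliptic locus. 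In fact the roles are opposite to what you describe: a deformation that \emph{stays} hyperelliptic genuinely deforms $D$ inside $\P^{g-1}$, and that deformation is carried by the $(uu)$ terms you absorb and discard, while the $(v)$ terms are precisely the data transverse to such deformations. The paper's proof instead runs as follows: (a) the syzygies of the $(uu)_0$ relations force the assignment sending each quadratic generator $Q$ of $\II_D$ to its attached linear form in the $v_j$'s to be an $\OO_D$-linear morphism from $\II_D$ to the odd line (the ``section of $N(g+1)$''), an element of a $(g-2)$-dimensional space; (b) if the deformation were hyperelliptic, then by Lemma \ref{ribbon-eq-syz-lem}(iii) it could be put in the canonical form \eqref{hyperell-equations} by an automorphism of $X_g$ over $R/(\pi^{d+1})$ trivial modulo $\pi^d$, and such automorphisms cannot change the $(v)$ terms; hence nonzero $(v)$ terms force nonhyperellipticity, and the equality of the two dimensions $g-2$ then yields that the $(v)$ section determines the normal vector. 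Your alternative ending via obstruction theory could in principle replace step (b), since the pairing \eqref{line-bun-obstr-pairing-formula} evaluated on a quadric $Q\in H^0(\II_D(2))$ does read off the class of the $(v)$-term of $Q$ in $\coker(\mu^2_C)$; but for $d>1$ this requires the jet-level statement of Section \ref{2term-obs-sec} together with Lemma \ref{two-obstr-lem} (that the obstruction pairing depends only on the normal vector), not the purely first-order Corollary \ref{1st-order-obstruction-cor}; and Lemmas \ref{limit-lem} and \ref{nested-Hilb-lem}, which concern ribbon limits and the nested Hilbert scheme, have no bearing on the derivative computation you would need there.
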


\Pf . (i) Let $\II_C$ denote the ideal of $C$ in $\P^{g-1}$, and let $D\sub C$
be the reduced subscheme, which is a rational normal curve in $\P^{g-1}$
with ideal $\II_D$.  Then the section of $\sO_{\P^1}(2g+2)$ is obtained
from a tangent vector to $C$ as a point of the Hilbert scheme of $\P^{g-1}$
by the composition of natural maps,
\begin{align*}
&\Hom(\II_C,\OO_C)\to \Hom(\II_C,\OO_D)\simeq\Hom(\II_C/\II_C\II_D,\OO_D)\to \Hom(\II_D^2/\II_C\II_D,\OO_D)\\
&\simeq \Hom(\sO_{\P^1}(-2g-2),\sO_{\P^1}).
\end{align*}
Since our first order deformation is given by equations in $X_g$, we have in addition to compose the above map
with the projection
$$\Hom(\II_{C,X_g},\OO_C)\to\Hom(\II_C,\OO_C),$$
where $\II_{C,X_g}$ is the ideal of $C$ in $X_g$.
It remains to observe that $(v_iv_j)$ is precisely the image of $\II_D^2$ in $\II_{C,X_g}$.

\noindent
(ii) The fact that the terms $(v)$ define a morphism $\II_D\to \OO_{\P^1}(g+1)$ is dictated by the 
syzygies of the $(uu)_0$ relations. The codimension of the hyperelliptic locus is equal to $g-2$, as
is the dimension of $H^0(\P^1,N(g+1))$ (as follows from Lemma \ref{normal-bundle-lem}).
Hence, it remains to check that nontrivial terms $(v)$ lead to a nonhyperelliptic deformation.
Indeed, if the deformation were hyperelliptic then by Lemma \ref{ribbon-eq-syz-lem}(iii), we could transform
our equations to the canonical form \eqref{hyperell-equations} by an automorphism of $X_g$ over $R/(\pi^{d+1})$,
trivial modulo $\pi^d$. But such automorphisms cannot change the $(v)$ terms in the first set of equations.
\ed

We use the following terminology below. Given a deformation functor $F$, an Artinian local ring $B$
and an element $\eta\in F(B)$ we define the tangent space to $F$ at $B$, $T_{F,\eta}$ as the preimage of
$\eta$ under the map $F(B[\eps]/(\eps^2))\to F(B)$. It has a natural structure of a $B$-module.
Given a square zero extension 
$$0\to M\to A\to B\to 0,$$
where $M$ is a free $B$-module,
and an element $\eta\in F(B)$, there is a natural transitive action of $T_{F,\eta}\ot_B M$ on
the preimage of $\eta$ in $F(A)$, coming from an isomorphism of algebras
$$A\times_B B[\eps_1]/(\eps_1^2)\times_B\ldots\times_B B[\eps_n]/(\eps_n^2)\simeq A\times_B A$$
(where $M\simeq B^{\oplus n}$). If $F$ is prorepresentable then this action is simply transitive.
Thus, a choice of a point in $F(A)$ over $\eta$ endows the space of all liftings of $\eta$ to $F(A)$ 
with a structure of a $B$-module.

\begin{prop}\label{hyperell-to-ribbon-prop}
Let $R$ be a dvr with residue characteristic not $2$, and let 
$C/R$ be a smooth curve with hyperelliptic special fiber and 
nonhyperelliptic general fiber.  Let $d$ be the largest integer such that 
$C_{R/{\frak m}^d}$ is hyperelliptic (i.e., such that there exists a 
hyperelliptic curve over $R$ agreeing with $C$ modulo ${\frak m}^d$).  Then 
the corresponding family $C'$ in the canonical Hilbert scheme is a ribbon to 
order precisely $2d$. Furthermore, $C'\mod{\frak m}$ is determined 
by the normal vector to the hyperelliptic locus associated with
$C\mod {\frak m}^{d+1}$, while
the normal vector to the ribbon locus associated with $C'\mod {\frak m}^{2d+1}$ is determined
by $C\mod {\frak m}$.
\end{prop}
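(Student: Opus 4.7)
The plan is to analyze the family $C/R$ through its embedding into the weighted projective space $X_g$ of Lemma \ref{wt-proj-emb-lem} and then pass to the canonical image $C'$ in $\P^{g-1}$ by eliminating the $v$-coordinates. First, I would extend the embedding of $C_0$ into $X_g$ to the family $C/R$, choosing a basis $(u_i,v_j)$ so that, by Lemma \ref{ribbon-eq-syz-lem}(iii) applied over $R/\pi^d$, the equations of $C$ modulo $\pi^d$ are in the canonical hyperelliptic form \eqref{hyperell-equations}. Lifting to $R/\pi^{d+1}$ and applying Lemma \ref{normal-ribbon-equation-lem}(ii), they take the shape
$$(uu)_0+\pi^d\ell(v)+\pi^d Q(u)=0,\qquad (uv)_0+\pi^d(\cdots)=0,\qquad v_iv_j-p_{ij}(u)+\pi^d(\cdots)=0,$$
with the nonzero linear-in-$v$ term $\ell(v)$ representing the normal direction $[\ell]\in \P N_{\wt{\Hyp}_g}\wt{\MM}_g|_{[C_0]}$ associated with $C\bmod\pi^{d+1}$.

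For the identification of $C'\bmod\pi$, I would combine Lemma \ref{limit-lem} (which gives that $C'\bmod\pi$ is some canonical ribbon) with the obstruction analysis of Corollary \ref{1st-order-obstruction-cor} and Proposition \ref{deg-qu-prop}: the quadratic relations in $\ker(\mu_{C_0})$ that extend to first order in the direction $\ell$ are precisely those vanishing on the canonical ribbon $\RR_\ell$, so $C'\bmod\pi=\RR_\ell$ is determined by $[\ell]$.

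The central computation for the ribbon-order statement is to square the deformed quadratic equation. From $u_iu_j-u_ku_l\equiv -\pi^d[\ell(v)+Q(u)]\pmod{\pi^{d+1}}$ in $\OO(C)$, squaring and using $v_av_b\equiv p_{ab}(u)\pmod{\pi^d}$ gives, modulo terms absorbed by the syzygies of Lemma \ref{ribbon-eq-syz-lem},
$$(u_iu_j-u_ku_l)^2\equiv \pi^{2d}\,h(u)\,L(u)^2\pmod{\pi^{2d+1}},$$
where $h(u)$ is the hyperelliptic branch polynomial of $C_0$ (so $y^2=h$) and $L(u)$ is the polynomial of degree $g-3$ on the rational normal curve dual to $\ell$ via Lemma \ref{normal-bundle-lem}. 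This yields $\II_D^2\subset \II_{C'}+(\pi^{2d})\OO$; extending $D$ to a family of rational normal curves over $R/\pi^{2d}$, which is possible by smoothness of the $RNC$ locus, produces a point of $Z^{\nest}(R/\pi^{2d})$, i.e., a ribbon family to order $2d$. Nonvanishing of $h(u)L(u)^2$ modulo $\pi^{2d+1}$ shows this cannot be extended further.

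Finally, the normal direction to $\ov{R}_g$ associated with $C'\bmod\pi^{2d+1}$ is extracted from this quartic via the identification in Lemma \ref{normal-ribbon-equation-lem}(i) with a section of $\OO_{\P^1}(2g+2)$. The factor $L(u)^2$ corresponds to a tangent direction to $R_g$ at $\RR_\ell$ coming from the $H^0(\OO(g-3))^*$-parameter of Lemma \ref{nested-Hilb-lem}(ii), and is thus killed in the normal quotient, leaving $[h(u)]$, which depends only on $C_0$. The main technical obstacle is this final step: one must verify carefully that the $L(u)^2$-factor together with any spurious contribution from the $Q(u)$-term all land in the tangent space to $R_g$ at $\RR_\ell$, using the description of that tangent space in the proof of Lemma \ref{normal-ribbon-lem} as the image of $\gl_g\oplus H^0(\OO(g-3))^*$ modulo the stabilizer $\gl_2\times\gl_1$.
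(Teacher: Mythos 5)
Your proposal assembles the right ingredients (the $X_g$-embedding, Lemma \ref{ribbon-eq-syz-lem}, Lemma \ref{normal-ribbon-equation-lem}, the squaring numerology $d\mapsto 2d$), but it has a genuine gap at its core: you never split the deformation into even and odd parts under the hyperelliptic involution, and that splitting is exactly what the paper's proof turns on. It is also the only place where the hypothesis $\cha\neq 2$ does real work; your argument essentially never invokes that hypothesis, which is a warning sign, since the statement fails in characteristic $2$. Concretely: your normalization controls the equations of $C$ only modulo $\pi^{d+1}$, while both ``ribbon to order precisely $2d$'' and the identification of the normal vector require control modulo $\pi^{2d+1}$. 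Maximality of $d$ forces the odd part $\ell\neq 0$ but does not kill the even part $Q(u)$ at order $\pi^d$, nor the arbitrary even and odd terms entering at orders $\pi^{d+1},\dots,\pi^{2d}$. Your squared relation then reads $(uu)_0\cdot(uu)_0'\equiv\pi^{2d}\bigl[hLL'+QQ'+(\ell Q'+\ell'Q)(u^2v)\bigr]$ modulo $\II_C+(\pi^{2d+1})$: the cross terms are not even $v$-free, and the claim that all spurious contributions ``land in the tangent space to $R_g$'' is not a deferred verification --- it is equivalent to the assertion that the even part of the deformation integrates to an actual hyperelliptic family, which is the hard content of the proposition. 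The paper proves precisely this: writing $C\bmod\fm^{2d}$ as a deformation of $C_h\bmod\fm^d$, it averages against the involution (using $1/2$) to define the even deformation $C_e$, shows by the syzygy analysis that $C_e$ extends to a hyperelliptic curve $C'_h/R$, and replaces $C_h$ by $C'_h$; relative to $C'_h$ the deformation is odd, the equations take the form \eqref{C-deform-eq} with no $(uu)$-terms before order $\pi^{2d}$, and the rescaling $v\mapsto\pi^{-d}v$ yields \eqref{C'-deform-eq}, manifestly a ribbon in canonical form mod $\fm^{2d}$ with $(vv)$-equations $v_iv_j-\pi^{2d}p_{ij}(u)+\dots$, from which both determination statements follow by Lemma \ref{normal-ribbon-equation-lem}.

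There is a second, independent gap in how you pass from the $X_g$-picture to the Hilbert-scheme family. Your congruences hold modulo $\II_C$ (the ideal of the family in $X_g$), but the assertions concern $\II_{C'}=\II_C\cap R[u]$, and reduction mod $\pi^{2d}$ does not commute with this elimination: the cokernel of $\OO(C')\hra\OO(C)$ is nonzero $\pi$-power torsion (each $v_j$ agrees with a quadric in $u$ only up to order $\pi^{d}$), so $f\in R[u]\cap\bigl(\II_C+(\pi^{2d})\bigr)$ does not imply $f\in\II_{C'}+\pi^{2d}R[u]$. Hence even granting your congruence $\II_D^2\subset\II_C+(\pi^{2d})$, you have not shown $\II_D^2\subset\II_{C'}\bmod\pi^{2d}$, which is what lifting to $Z^{\nest}$, i.e.\ being a ribbon to order $2d$, requires; and ``precisely $2d$'' needs the failure for every family of rational normal curves $D$ over $R/\pi^{2d+1}$, not only the constant one. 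The paper avoids elimination altogether: by Lemma \ref{limit-lem} the special fiber of $C'$ is a nonhyperelliptic canonical ribbon, so relative sections of $\om^{\ot 2}$ lift $C'$ itself to a flat family in $X_g$, and ribbon-ness is read off the full set of equations of that lift (controlled in all degrees by the flatness and Hilbert-series arguments behind Lemma \ref{ribbon-eq-syz-lem}), not off finitely many degree-$4$ products. Finally, a small fixable point: your identification of $C'\bmod\fm$ rests on Corollary \ref{1st-order-obstruction-cor}, a first-order statement, whereas your family leaves the hyperelliptic locus at order $d$; you need the jet-independence of the obstruction from Section \ref{2term-obs-sec}, or, as in the paper, one simply reads $C'\bmod\fm$ off the rescaled equations.
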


\Pf . We know that the family $C'/R$ in the canonical Hilbert scheme will have a ribbon as a central fiber
(see Lemma \ref{limit-lem}). Furthermore, by choosing extra generators in $H^0(\om_{C'}^{\ot 2})$ we
can lift $C'$ to a family in the Hilbert scheme of $X_g$. Thus, for the rest of the proof we will study this family
of subschemes of $X_g$.

Let $C_h/R$ be a hyperelliptic curve approximating $C$ to order $d$, 
so that we may view $C$ as a deformation of $C_h \mod \fm^d$. Recall that the set of extensions of
$C_h \mod \fm^d$ to a curve over $R/\fm^{2d}$ can be identified with the $R/\fm^d$-module, $T_{\MM,C_h\mod \fm^d}$,
the tangent space to $C_h\mod \fm^d$.  

By Lemma \ref{ribbon-eq-syz-lem}, $C_h$ is given in $X_g$ by equations
\eqref{hyperell-equations} for some homogeneous polynomials of degree $4$,
$p_{ij}(u)$.  Since $C$ is a flat deformation of $C_h\mod \fm^d$, the
equations of $C$ are obtained from those of $C_h$ by adding polynomials
with coefficients in ${\frak m}^d$.  Now, the hyperelliptic involution of
$C_h$ extends to an action on $X_g$ by negating the $v$ variables, and
pulling back through this involution in general changes the equations for
$C$, taking each generator $p$ to $\pm \iota(p)$ where $\iota$ negates the
$v$ variables and the sign is chosen to preserve the corresponding equation
of $C_h$.  Then we may define a new deformation $C_e\mod \fm^{2d}$ of
$C_h\mod \fm^d$ that has equations $(p\pm \iota(p))/2$; linearity of the
tangent space tells us that this is still a well-defined flat deformation
of $C_h$ over $R/{\frak m}^{2d}$. In other words, we split the tangent
vector to $C_h\mod \fm^d$ corresponding to $C\mod \fm^{2d}$ into even and
odd parts with respect to the involution, and take $C_e\mod \fm^{2d}$ to be
the deformation corresponding to the even part.

We then have the following analysis of the equations for $C_e\mod \fm^{2d}$.
Note that the new terms of these equations should define a morphism in $\Hom(\II_{C_h},\OO_{C_h})$,
so applying the syzygies of $C_h$ to the equations for $C_e\mod \fm^{2d}$ should give us elements
of $\II_{C_h}$.

(a) The $(uu)$ equations for $C_e\mod \fm^{2d}$ have only quadratic terms in $u$
(since the $v$ terms are killed by symmetrization), and applying to them the syzygies of the form $u(uu)_0$
should give us elements of the ideal of $C_h$, which therefore belong to the ideal generated by $(uu)_0$.
Hence, the $(uu)$ equations for $C_e\mod \fm^{2d}$
give a flat deformation of the underlying rational 
normal curve, and thus themselves cut out a rational normal curve. We may thus act by 
$\gl_g\otimes {\frak m}^d/{\frak m}^{2d}$ (and a corresponding change of 
basis on the space of equations) to make this deformation trivial. 
(This also commutes with $\iota$, so changes $C$ in a compatible way.)

(b) The $(uv)$ equations for $C_e\mod \fm^{2d}$ have only terms of the form $uv$ (the terms cubic in $u$ are 
eliminated by symmetrization), so looking at the linear syzygies between $(uu)_0$ and $(uv)_0$, 
given that we have normalized as in (a), we obtain that these relations define a flat deformation of 
the total space of $\sO_{\P^1}(g+1)$ 
over the normal rational curve embedded into $X_g$. It follows that 
we can similarly eliminate that deformation by acting by 
$\gl_{g-2}\otimes {\frak m}^d/{\frak m}^{2d}$.

(c) The $vv$ equations deform by adding terms that are quadratic in $v$ and 
terms that are quartic in $u$.  Since every quadratic term already appears 
in a generator of $C_h$, we can eliminate the $vv$ terms by a change of 
basis.  But then the resulting equations of $C_e\mod \fm^{2d}$ are of precisely the 
same form as those of $C_h$, and we can in fact extend $C_e\mod \fm^{2d}$ to a 
hyperelliptic curve $C'_h$ over $R$.

Thus, replacing $C_h$ with $C'_h$, without loss of generality we may 
assume that $C\mod \fm^{2d}$ is an odd deformation of $C_h\mod \fm^{2d}$ (with respect to
the hyperelliptic involution); that is, the 
equations of $C\mod {\frak m}^{2d}$ are obtained by:

(a) adding $v$ terms with coefficients in ${\frak m}^d$ to the $(uu)_0$ equations;

(b) adding $uuu$ terms (ditto) to the $(uv)_0$ equations;

(c) adding $uuv$ terms (ditto) to the $(vv)$ equations.

Thus, the equations of $C$ itself can be written schematically as
\begin{equation}\label{C-deform-eq}
\begin{array}{l}
(uu)_0+\pi^d(v)+\pi^{2d}(uu)=0,\\
(uv)_0+\pi^d(uuu)+\pi^{2d}(uv)=0,\\
v_iv_j-p_{ij}(u)+\pi^d(uuv)+\pi^{2d}(vv)+\pi^{2d}(uuuu).
\end{array}
\end{equation}
Let us modify the equations of $C$ by dividing the $v$ 
variables by $\pi^d$ and clearing denominators as necessary: 
\begin{equation}\label{C'-deform-eq}
\begin{array}{l}
(uu)_0+(v)+\pi^{2d}(uu)=0,\\
(uv)_0+\pi^{2d}(uuu)+\pi^{2d}(uv)=0,\\
v_iv_j-\pi^{2d}p_{ij}(u)+\pi^{2d}(uuv)+\pi^{2d}(vv)+\pi^{4d}(uuuu).
\end{array}
\end{equation}
We call the obtained family $C'$.
We immediately see that $C'\mod \fm^{2d}$ is a ribbon in canonical form (see Lemma \ref{ribbon-eq-syz-lem}).
Note that the normal vector to the hyperelliptic locus associated with $C$
determines the terms $(v)\mod\fm$ in the equations \eqref{C-deform-eq}
(see Lemma \ref{normal-ribbon-equation-lem}(ii)),
so $C'\mod {\frak m}$ depends only on this normal vector.

Now Lemma \ref{normal-ribbon-equation-lem}(i) implies that
$C'\mod \fm^{2d+1}$ is not a ribbon, and that
the normal vector to the ribbon locus coming from $C'\mod \fm^{2d+1}$
depends only on $p_{ij}(u)$.
\ed

\begin{rmk}  Without the constraint on the residue characteristic, one can 
still perform the above calculation modulo $\pi^d$, and find that 
$C'_{R/{\frak m}^d}$ is indeed a ribbon (the ribbon corresponding in the 
usual way to the normal vector to the hyperelliptic locus).  For a 
general hyperelliptic curve in characteristic $2$, it is no longer true 
that $C'_{R/{\frak m}^{2d}}$ is a ribbon.  Moreover, even when $2$ is 
invertible, neither the ribbon $C'_{R/{\frak m}^{2d}}$ nor the 
corresponding normal vector are uniquely determined by 
$C_{R/{\frak m}^{2d}}$; only their images modulo ${\frak m}^d$ are so determined.
\end{rmk}

Next, we consider the inverse procedure of going from a family with ribbon special fiber
to a family with hyperelliptic special fiber. 

\begin{prop}\label{ribbon-to-hyperell-prop}
Suppose that $C/R$ is a point (over a dvr $R$ with residue 
field not of characteristic $2$) of the canonical Hilbert scheme $H_g(\P^{g-1})$.

\noindent
(i) Assume that for some integer $d>0$, $C_{R/{\frak m}^{2d}}$ is a ribbon but 
$C_{R/{\frak m}^{2d+1}}$ is not.  Then the special fiber of the normalization of $C$ 
along the special fiber is the double cover of 
$\P^1$ obtained by adjoining a square root of the associated section of 
$\sO_{\P^1}(2g+2)$. Furthermore, the corresponding normal vector to the hyperelliptic locus is determined
by $C\mod{\frak m}$.

\noindent
(ii) Assume that for some odd integer $n>0$, $C_{R/{\frak m}^n}$ is a ribbon but
$C_{R/{\frak m}^{n+1}}$. Then the assertion of (i) holds after replacing $C/R$ with the induced family over $R'$,
where $\Spec(R')\to \Spec(R)$ the ramified double cover corresponding
to taking the square root of a uniformizer. 
\end{prop}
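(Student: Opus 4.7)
The strategy is to mirror the proof of Proposition \ref{hyperell-to-ribbon-prop} in reverse: lift $C$ to a family of subschemes of $X_g$, use the ribbon hypothesis to bring the equations into canonical form modulo $\fm^{2d}$, and then rescale $v_i\mapsto\pi^d w_i$ to produce a flat family $C'/R$ whose special fiber is the required double cover and which realises the normalization of $C$ along its special fiber.

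For the setup, I would lift a basis of $H^0(C_k,\om_{C_k})$ together with a complement of $\im(\mu^2_{C_k})$ in $H^0(C_k,\om_{C_k}^{\ot 2})$ (as in Lemma \ref{ribbon-eq-syz-lem}(i)) to sections over $R$ using the local freeness of the pushforwards $p_*(\om_{C/R})$ and $p_*(\om_{C/R}^{\ot 2})$, obtaining an embedding $C\hookrightarrow X_g\times\Spec R$. The ribbon hypothesis modulo $\fm^{2d}$, combined with the $\gl_g\oplus\gl_{g-2}$ normalizations at order $\pi^{2d}$ used in steps (a)--(c) of the proof of Proposition \ref{hyperell-to-ribbon-prop}, brings the equations into the form treated in Lemma \ref{normal-ribbon-equation-lem}(i),
\begin{align*}
(uu)_0+(v)+\pi^{2d}(uu)&=0,\\
(uv)_0+\pi^{2d}(uuu)+\pi^{2d}(uv)&=0,\\
v_iv_j-\pi^{2d}p_{ij}(u)+\pi^{2d}(uuv)+\pi^{2d}(vv)+\pi^{4d}(uuuu)&=0,
\end{align*}
with $(p_{ij}(u))$ assembling into the section $h\in H^0(\P^1,\OO(2g+2))$ recording the normal vector to the ribbon locus; nontriviality of $h$ is exactly the hypothesis that $C_{R/\fm^{2d+1}}$ is not a ribbon.

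Substituting $v_i=\pi^d w_i$ and dividing the second and third groups of equations by $\pi^d$ and $\pi^{2d}$ respectively yields a subscheme $C'\subset X_g\times\Spec R$ cut out by
\begin{align*}
(uu)_0+\pi^d(w)+\pi^{2d}(uu)&=0,\\
(uw)_0+\pi^d(uuu)+\pi^{2d}(uw)&=0,\\
w_iw_j-p_{ij}(u)+\pi^d(uuw)+\pi^{2d}(ww)+\pi^{2d}(uuuu)&=0,
\end{align*}
whose reduction modulo $\fm$ is precisely the canonical hyperelliptic form \eqref{hyperell-equations} cutting out the double cover $w^2=h(u)$. The $w_i=v_i/\pi^d$ are integral over $\OO_C$ by these very polynomial relations, so $C'$ sits between $C$ and its normalization $\wt C$ inside $K(C_K)$; since $C'$ is flat over $R$ with generically smooth, Cohen--Macaulay special fiber and smooth generic fiber, it satisfies $R_1+S_2$, hence is normal. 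Thus $C'=\wt C$, identifying $\wt C_k$ with the claimed double cover of $\P^1$. The normal vector to the hyperelliptic locus at $[C'_k]\in\MM_g$ is read off by Lemma \ref{normal-ribbon-equation-lem}(ii) from the $\pi^d(w)$ coefficient in the first equation of $C'$ modulo $\fm^{d+1}$, which coincides under the substitution with the $(v)$ coefficient in the first equation of $C\mod\fm$ (the canonical-form ribbon data), and therefore depends only on $C\mod\fm$.

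For part (ii), pass to $R'=R[\pi']/(\pi'^2-\pi)$. Since $\fm^n R'=\fm'^{2n}$, flat base change gives that $C_{R'/\fm'^{2n}}$ is still a ribbon. The square-zero extensions $R'/\fm'^{2n+1}\twoheadrightarrow R'/\fm'^{2n}$ and $R/\fm^{n+1}\twoheadrightarrow R/\fm^n$ have ideals canonically generated by the images of $\pi^n=\pi'^{2n}$ and fit compatibly into the inclusions $R/\fm^{\bullet}\hookrightarrow R'/\fm'^{2\bullet}$, so the obstructions to extending the ribbon structure in the two settings agree. Thus $C_{R'/\fm'^{2n+1}}$ fails to be a ribbon, and part (i) applies over $R'$ with $d=n$. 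The main technical obstacle lies in performing the $\gl_g\oplus\gl_{g-2}$ normalizations at order $\pi^{2d}$ globally over $R$, not merely modulo $\fm^{2d+1}$; fortunately, residual higher-order error terms feed transparently into higher-order terms of $C'$ and affect neither its special fiber nor its first-order normal vector, so the conclusion is not sensitive to this.
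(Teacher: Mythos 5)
Your proposal mirrors the paper's proof almost step for step: the same lift of $C$ to $X_g$, the same canonical form \eqref{C'-deform-eq} for the equations (though in this direction that form comes from Lemma \ref{ribbon-eq-syz-lem}(i),(ii) applied to the ribbon $C\bmod\fm^{2d}$, not from the even/odd $\gl_g\oplus\gl_{g-2}$ steps (a)--(c) of Proposition \ref{hyperell-to-ribbon-prop}, which use the hyperelliptic involution and run the other way -- a cosmetic slip), the same rescaling $v_i=\pi^d w_i$ producing exactly the equations \eqref{C-deform-eq}, the same appeal to Lemma \ref{normal-ribbon-equation-lem}(i),(ii) to identify the special fiber as $w^2=h$ and to pin down the two normal vectors, and the same ramified quadratic base change for part (ii). The one genuine divergence is how you identify the rescaled family $C'$ with the normalization of $C$ along the special fiber: the paper realizes $C'$ as the last term of a chain of blowups of $C$ in the reduced special fiber (so that the partial normalization is unchanged) and then invokes the isolated singularities of $C'_k$, whereas you use the sandwich $\OO_C\subset\OO_{C'}\subset(\text{integral closure})$ given by integrality of the $w_i$, plus Serre's criterion. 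The sandwich idea is a clean shortcut and is fine as far as it goes.

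The gap is in the Serre-criterion step: your verification of $R_1$ invokes ``smooth generic fiber,'' which is not among the hypotheses. The proposition is stated for an arbitrary point of $H_g(\P^{g-1})$ over $R$ that is a ribbon to order exactly $2d$, and families with \emph{singular} generic fiber genuinely occur here; indeed the proposition applied to such families is what shows, in the proof of Theorem \ref{two-blow-ups-thm}, that a normal vector outside the discriminant forces the generic fiber to be smooth (so that the strict transform of the singular locus meets the exceptional divisor only inside $\ov{\DD}$) -- assuming smoothness at the outset would make that use circular. Concretely, if $C_K$ is singular then $C'$ fails $R_1$ at the singular closed points of $C_K$, which are codimension-one points of the total space; so $C'$ is not normal, and the integral closure of $\OO_C$ in $K(C_K)$ is strictly bigger than $\OO_{C'}$, since it also normalizes the generic fiber. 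Relatedly, the statement concerns the normalization \emph{along the special fiber}, i.e.\ the integral closure of $\OO_C$ in $\OO_{C_K}$, which your $\wt{C}$ conflates with the closure in $K(C_K)$; the two differ exactly when $C_K$ is not normal. The repair uses only data along the special fiber: $C'$ is Cohen--Macaulay (flat over $R$ with Cohen--Macaulay fibers), it is regular at every point where $C'_k$ is smooth, and the singular points of $C'_k$ are isolated (characteristic $\neq 2$ and $h\neq 0$); hence any section of $\OO_{C_K}$ integral over $\OO_{C'}$ is regular on a punctured neighborhood of each such point and extends across it by the $S_2$ (depth $2$) property. This local argument is precisely what the paper's terse remark that ``the special fiber of $C_d$ has isolated singularities, so $C_d$ is the desired partial normalization'' encodes, and with it your proof goes through in the stated generality.
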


\Pf . (i) The process is just the inverse of that of Proposition \ref{hyperell-to-ribbon-prop}.
Starting with $C$, a (nonhyperelliptic) ribbon to order precisely $2d$, with 
nonhyperelliptic special fiber, we can lift it to a subscheme of $X_g$ with equations of the form
\eqref{C'-deform-eq}. Then rescaling the variables $(v_i)$ 
by $\pi^d$ gives a curve with equations \eqref{C-deform-eq}.
which is hyperelliptic to order precisely $d$, since
by Lemma \ref{normal-ribbon-equation-lem}(ii), the nontrivial linear terms $(v)$
determine the corresponding normal vector to the hyperelliptic locus.
Also, by Lemma \ref{normal-ribbon-equation-lem}(i), 
this hyperelliptic curve modulo ${\frak m}^d$ is the double cover corresponding to the section 
of $\sO_{\P^1}(2g+2)$ coming from the normal vector to the ribbon 
locus corresponding to $C$. 

Note that if we had instead rescaled the variables $(v_i)$ by $\pi^l$ for some $1\le l<d$, then 
the resulting special fiber would have been $y^2=0$, and the reduced 
special fiber would have been singular in the new family $C_l$.  Moreover, 
$C_l$ is the blowup in the reduced special fiber of $C_{l-1}$, and thus $C_d$ 
is the result of a sequence of codimension 1 blowups, so has the same 
(partial) normalization as $C$.  But the special fiber of $C_d$ has isolated 
singularities, so $C_d$ is the desired partial normalization.


\noindent
(ii) A ramified quadratic 
base change doubles $n$ without affecting smoothness of the generic fiber and without changing the normal vector
to the ribbon locus.
\ed




\begin{rmk} 
    A similar argument shows directly (again with residue 
characteristic not $2$) that if $C$ is a ribbon to odd order $2d+1$, then its 
relative normalization has nonreduced special fiber.  Indeed, it is 
enough to show that after rescaling by $\pi^d$, the singular locus is 
$0$-dimensional, which by semicontinuity reduces to the case when
$C_{R/{\frak m}^{2d+1}}$ is the hyperelliptic ribbon.  Rescaling gives a 
curve which is a ribbon to order $1$ and again semicontinuity allows us to 
assume the curve is hyperelliptic to order $2$.  But then the curve has 
the form $y^2 + \pi h$, which is singular precisely on the 0-dimensional 
subscheme where $h$ vanishes.
\end{rmk}

\begin{rmk}
      Suppose we are given a point of the canonical Hilbert scheme over an 
equicharacteristic dvr such that the special fiber is a ribbon and the 
general fiber is smooth.  Then the following is true in characteristic 
not 2: the singular subscheme of the total space is the subscheme of the 
reduced special fiber cut out by the image in $\Gamma(\sO_{\P^1}(2g+2)) $
of the tangent vector induced by the family.  Note that since the family 
is generically smooth, the singular subscheme of the total space is 
contained in the singular subscheme of the special fiber, which since 
the characteristic is not 2 is just the reduced special fiber.  (The 
ribbon locally looks like $\Spec(k[x,\epsilon]/\epsilon^2)$.)  In general 
(for families which are not generically smooth or in characteristic 2), 
the subscheme cut out by the element of $\Gamma(\sO_{\P^1}(2g+2))$ is 
merely the intersection with the reduced special fiber of the singular 
subscheme of the total space.
\end{rmk}

We will use the following criterion for proving regularity of a rational map.

\begin{lem}\label{Smyth-lem} (\cite[Lem.\ 4.2]{Smyth})
Let $f:X\dashrightarrow Y$ be a birational map of algebraic spaces with $X$
normal. Assume that $X$ and $Y$ are open subsets of proper algebraic spaces. Then
to check that $f$ is well defined at a given point $x\in X$ we have to 
consider various maps
from a dvr to $X$ sending the closed point to $x$ and the generic point to 
the open locus where $f$ is defined.
If their composition with $f$ always has the same limit in $Y$ then $f$ is 
well defined at $x$.
\end{lem}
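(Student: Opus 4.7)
The plan is to pass to the graph of $f$ and to deduce, via a valuative argument, that the graph is an isomorphism in a neighborhood of $x$; the hypothesis about DVRs is exactly what controls the fiber of the graph over $x$.

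First I would form the graph. Let $U \sub X$ be the (open) domain of definition of $f$, and let $\Gamma_0 \sub U \times Y$ be the graph of $f|_U$. Fix proper algebraic spaces $\ov X$ and $\ov Y$ with $X \sub \ov X$ and $Y \sub \ov Y$ open, and let $\Gamma$ be the scheme-theoretic closure of $\Gamma_0$ in $X \times \ov Y$. Because $\ov Y$ is proper, the projection $\pi:\Gamma \to X$ is proper, and it is birational, restricting to an isomorphism $\Gamma_0 \rTo{\sim} U$ with inverse $(\id, f|_U)$. Extending $f$ at $x$ is equivalent to showing that $\pi$ is an isomorphism in a neighborhood of $x$ and that the induced map to $\ov Y$ factors through $Y$ there.

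Next I would use the hypothesis to pin down $\pi^{-1}(x)$. Let $z$ be any point of $\pi^{-1}(x)$, viewed inside $X \times \ov Y$. Since $\Gamma_0$ is dense in $\Gamma$ and $\Gamma$ is an algebraic space, the valuative criterion of properness (applied to $\pi$ after, if desired, passing to the normalization of $\Gamma$ so as to choose a DVR inside the local ring at a preimage of $z$) produces a dvr $R$ with fraction field $K$ and a morphism $\varphi:\Spec R \to \Gamma$ whose generic point lands in $\Gamma_0$ and whose closed point maps to $z$. Then $t := \pi \circ \varphi : \Spec R \to X$ is exactly a dvr trace of the kind considered in the hypothesis: $t(\Spec K) \in U$ and $t(\Spec k) = x$. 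Composing $\varphi$ with the second projection to $\ov Y$ produces the unique extension of $f \circ t|_K$ to $\Spec R$, which by hypothesis lies in $Y$ with closed-point value the fixed point $y \in Y$. Hence $z = (x,y)$, so
\[
\pi^{-1}(x) = \{(x,y)\} \sub X \times Y \sub \Gamma
\]
set-theoretically, and the unique point of the fiber lies in the locus where the projection to $\ov Y$ factors through $Y$.

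Finally I would invoke Zariski's main theorem. The map $\pi:\Gamma \to X$ is proper and birational; the set of points of $X$ over which $\pi$ is quasi-finite is open, and by Step 2 it contains $x$. Shrinking $X$ to such a neighborhood, $\pi$ becomes finite and birational onto the normal space $X$, so $\pi$ is an isomorphism. Composing $\pi^{-1}$ with the second projection $\Gamma \to \ov Y$, which lands in $Y$ near $(x,y)$, gives the desired regular extension of $f$ at $x$.

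The main obstacle is Step 2: one must be able to realize every point $z$ of $\pi^{-1}(x)$ by a dvr whose generic point actually lies in $\Gamma_0$ (equivalently, whose projection to $X$ lies in the domain of $f$), and to do so in the setting of algebraic spaces. This is handled by first normalizing $\Gamma$, which stays proper over $X$, and then choosing a dvr inside the local ring of the normalization at a chosen preimage of $z$ whose generic point specializes to a point of the dense open $\Gamma_0$; the valuative criterion of properness for algebraic spaces then produces the required $\varphi$. Once this is in place, the remaining steps are standard applications of Zariski's main theorem.
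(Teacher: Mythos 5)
Your proof is correct, but it takes a genuinely different route from the paper's. The paper disposes of the lemma in two lines by reducing to the cited proper case: it takes the given proper ambient spaces, replaces $\ov{X}$ by its normalization (into which $X$, being normal, embeds as a dense open subspace), and applies Smyth's Lemma 4.2 to the induced birational map of proper spaces; the common limit lies in $Y$ by hypothesis, so the extension of the compactified map at $x$ takes its value in the open subspace $Y$ and restricts to the desired extension of $f$. You instead reprove the lemma from scratch: you close up the graph in $X\times\ov{Y}$, use the DVR hypothesis together with the valuative criterion of separatedness for $\ov{Y}$ to show that the fiber of $\pi:\Gamma\to X$ over $x$ is the single point $(x,y)$ with $y\in Y$, and conclude by Zariski's main theorem (proper plus quasi-finite equals finite, and a finite birational morphism onto a normal integral space is an isomorphism). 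The trade-off is this: the paper's reduction is shorter but leans on the precise form of Smyth's statement --- after compactifying, Smyth's criterion quantifies over DVR traces whose generic point lies in the domain of definition of the extended map $\ov{X}^{\nu}\dashrightarrow\ov{Y}$, which can be strictly larger than the domain of $f$, so one needs the version of the lemma in which the dense open set may be taken smaller than the full domain of definition; your direct argument sidesteps this entirely, since only traces coming from the graph over $U$ ever appear, and it makes explicit exactly where normality of $X$ and properness of $\ov{Y}$ are used (indeed it reproves the proper case as well). Two small points to tighten in your write-up: you should note that $\Gamma_0$ is closed in $U\times\ov{Y}$ (being the graph of a morphism to a separated space), so that $\pi^{-1}(U)=\Gamma_0$ and $\pi$ really is birational; and in Step 2 the DVR should be chosen so that its generic point maps to the generic point of $\Gamma$, which lies in the dense open $\Gamma_0$ --- the standard fact that a Noetherian local domain of positive dimension is dominated by a DVR of its fraction field accomplishes this, and passes to algebraic spaces via an \'etale chart.
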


\Pf . Smyth proves this assuming the spaces are proper.
Let $\ov{X}$ and $\ov{Y}$ are proper spaces containing $X$ and $Y$ as dense open subsets.
It remains to replace $\ov{X}$ by its normalization and apply the result for proper spaces.
\ed

\noindent
{\it Proof of Theorem \ref{two-blow-ups-thm}}.
Let $U\sub\Bl_{\ov{R_g}}H_g(\P^{g-1})$ be the complement to the union of the preimage of $\ov{R}_g\setminus R_g$,
the strict transform of the locus of singular curves
and $\ov{\DD}$.  Note that $U$ is the disjoint union of the locus of smooth curves in $H_g(\P^{g-1})$ and
of $\P N_{R_g}H_g(\P^{g-1})\setminus \DD$.

Proposition \ref{ribbon-to-hyperell-prop} together with Lemma
\ref{Smyth-lem} imply that there is a regular morphism
$$U\to [\Bl_{\wt{\Hyp}_g}\wt{\MM}_g]^{\coarse}.$$
Conversely, using Proposition \ref{hyperell-to-ribbon-prop} and Lemma \ref{Smyth-lem}
we get a regular morphism 
$$[\Bl_{\wt{\Hyp}_g}\wt{\MM}_g]^{\coarse}\to \Bl_{\ov{R_g}}H_g(\P^{g-1}).$$
To show that the image is in $U$, it suffices to show that for a family of smooth curves $C$ over a dvr,
hyperelliptic up to order $d$, the normal vector to the ribbon locus of the corresponding family in $H_g(\P^{g-1})$
does not lie in the discriminant locus. 
But this follows from the construction of Proposition
\ref{hyperell-to-ribbon-prop} and from Lemma \ref{normal-ribbon-equation-lem}.
\ed

\begin{rmk}
More generally, we conjecture that over $\Z$, the blown up $\PGL_g$-bundle
remains an open subscheme of the blow up of the Hilbert scheme in the
ribbon locus (defined as the closure from $\Z[1/2]$); the odd behavior in
characteristic $2$ reflects the fact that the fiber over $2$ of the
ribbon subscheme is nonreduced.
\end{rmk}

\end{document}